\newtheorem{theorem}{Theorem}[section]
\newtheorem{lemma}[theorem]{Lemma}
\newtheorem{corollary}[theorem]{Corollary}
\newtheorem{proposition}[theorem]{Proposition}
\newtheorem{remark}[theorem]{Remark}
\numberwithin{equation}{section}
\begin{document}
\baselineskip=15.5pt

\title[Brauer group of moduli of principal bundles]{Brauer
group of moduli of principal bundles over a curve}

\author[I. Biswas]{Indranil Biswas}

\address{School of Mathematics, Tata Institute of Fundamental
Research, Homi Bhabha Road, Bombay 400005, India}

\email{indranil@math.tifr.res.in}

\author[Y. I. Holla]{Yogish I. Holla}

\address{School of Mathematics, Tata Institute of Fundamental
Research, Homi Bhabha Road, Bombay 400005, India}

\email{yogi@math.tifr.res.in}

\subjclass[2000]{14F22, 14D23, 14D20}

\keywords{Brauer group, principal bundle, moduli space, curve}

\date{}

\begin{abstract}
Let $G$ be a semisimple linear algebraic group defined over
the field $\mathbb C$, and let $C$ be an irreducible
smooth complex projective curve of genus
at least three. We compute the Brauer group of the smooth locus
of the moduli space of semistable principal $G$--bundles over $C$.
We also compute the Brauer group of the moduli stack
of principal $G$--bundles over $C$.
\end{abstract}

\maketitle

\section{Introduction}\label{sec1}

The base field will always be $\mathbb C$.
Let $Y$ be a smooth quasiprojective variety, or more generally, a
smooth algebraic stack. Using the isomorphism ${\mathbb C}^r\otimes
{\mathbb C}^{r'}\,=\,{\mathbb C}^{rr'}$,
we have a homomorphism
$\text{PGL}(r,{\mathbb C})\times \text{PGL}(r',{\mathbb C})
\,\longrightarrow\, \text{PGL}(rr',{\mathbb C})$. So a
principal $\text{PGL}(r,{\mathbb C})$--bundle $\mathbb P$ and
a principal $\text{PGL}(r',{\mathbb C})$--bundle
${\mathbb P}'$ on $Y$ together produce
a principal $\text{PGL}(rr',{\mathbb C})$--bundle
on $Y$, which we will denote by ${\mathbb P}\otimes {\mathbb P}'$.
The two principal
bundles $\mathbb P$ and ${\mathbb P}'$ 
are called \textit{equivalent} if there are vector bundles
$V$ and $V'$ on $Y$ such that the principal bundle
${\mathbb P}\otimes {\mathbb P}(V)$ is isomorphic to
${\mathbb P}'\otimes {\mathbb P}(V')$. The equivalence classes
form a group which is called the \textit{Brauer group} of $Y$.
The addition operation is defined
by the tensor product, and the inverse is defined to be the dual
projective bundle. The Brauer group of $Y$ will be
denoted by $\text{Br}(Y)$.

Let $C$ be an irreducible smooth complex projective curve such that
$\text{genus}(C)\,\geq\, 3$. Fix a line bundle $L$ over $C$
of degree $d$. Let $M(r,L)$ be the moduli space of stable vector
bundles $E$ over $C$ of rank $r$ with $\bigwedge^r E\,=\, L$.
The Brauer group $\text{Br}(M(r,L))$ is cyclic of order
$\text{g.c.d.}(r\, ,\text{degree}(L))$ \cite{BBGN}. A generator
for $\text{Br}(M(r,L))$ is obtained by restricting the
universal projective bundle over $C\times M(r,L)$ to $\{x_0\}
\times M(r,L)$, where $x_0\,\in\, C$ is some fixed point.
The Brauer group of the smooth locus of
stable principal $\text{PGL}(r,{\mathbb C})$--bundles over $C$
was computed in \cite{BHO}.

Let $G$ be a semisimple linear algebraic group. Let
${\mathcal M}_C(G)$ be the moduli stack of principal $G$--bundles on
$C$. The coarse moduli space of semistable principal $G$--bundles on
$C$ will be denoted by $M^{\rm ss}_C(G)$. Let
$$
M_C(G)^{\rm rs}\,\subset\, M^{\rm ss}_C(G)
$$
be the smooth locus. It is known that $M_C(G)^{\rm rs}$ coincides with
the locus of regularly stable principal $G$--bundles on
$C$ (a stable principal $G$--bundle $E_G$ is called regularly stable
if the natural homomorphism from the center of $G$ to the
group of automorphisms of $E_G$ is surjective).
Our aim here is to compute the Brauer groups of $M_C(G)^{\rm rs}$
and ${\mathcal M}_C(G)$.

We prove the following theorem (see Theorem \ref{thm0} and
Corollary \ref{sc}):

\begin{theorem}\label{th-i1}
Assume that $G$ is semisimple and simply connected.
Then the Brauer group
of ${\mathcal M}_C(G)$ is trivial, and the Brauer group
${\rm Br}(M_C(G)^{\rm rs})$ is the group of characters
$Z^{\vee}_G$ of the center $Z_G\, \subset\, G$.
\end{theorem}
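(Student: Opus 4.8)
The plan is to prove the two assertions together by exploiting the fact that, over the regularly stable locus, the moduli stack is a gerbe banded by the center. Write $\mathcal{M}^{\mathrm{rs}}\subset \mathcal{M}_C(G)$ for the open substack of regularly stable bundles and $\varphi\colon \mathcal{M}^{\mathrm{rs}}\to M_C(G)^{\mathrm{rs}}$ for the morphism to the coarse space. By the definition of regular stability, every point of $\mathcal{M}^{\mathrm{rs}}$ has automorphism group exactly $Z_G$, so $\varphi$ is a gerbe banded by the finite abelian group $Z_G$. Throughout, $\mathrm{Br}$ denotes the \'etale cohomological Brauer group $H^2(-,\mathbb{G}_m)$; for the smooth objects occurring here it is a torsion group and agrees with the Azumaya Brauer group.

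The first and, I expect, hardest step is to establish the stack assertion $\mathrm{Br}(\mathcal{M}_C(G))=0$. The mechanism is the exponential sequence on the smooth stack $\mathcal{M}_C(G)$: it controls the torsion group $H^2(\mathcal{M}_C(G),\mathbb{G}_m)$ in terms of $H^2(\mathcal{M}_C(G),\mathcal{O})$ and the torsion of $H^3(\mathcal{M}_C(G),\mathbb{Z})$. For $G$ simply connected the integral cohomology of $\mathcal{M}_C(G)$ is accessible through the Atiyah--Bott/uniformization description, and the inputs I would use are that $H^2(\mathcal{M}_C(G),\mathcal{O})=0$ and that $H^3(\mathcal{M}_C(G),\mathbb{Z})$ carries no torsion (reflecting the absence of low-degree torsion in $H^\ast(BG,\mathbb{Z})$ for simply connected $G$). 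Granting these, the exponential sequence forces $\mathrm{Br}(\mathcal{M}_C(G))=0$. The delicate point is controlling this integral and torsion cohomology for a stack that is not of finite type, and this is where the bulk of the work lies.

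Next I would transfer to the regularly stable locus and set up the gerbe computation. For $g:=\mathrm{genus}(C)\ge 3$ the non-regularly-stable locus has codimension at least two in $\mathcal{M}_C(G)$, so purity for the Brauer group gives $\mathrm{Br}(\mathcal{M}^{\mathrm{rs}})=\mathrm{Br}(\mathcal{M}_C(G))=0$, and purity for $\mathrm{Pic}$ gives $\mathrm{Pic}(\mathcal{M}^{\mathrm{rs}})=\mathrm{Pic}(\mathcal{M}_C(G))$. I then run the Leray spectral sequence for $\varphi$ with coefficients in $\mathbb{G}_m$. Since $\varphi$ is a $Z_G$-gerbe one has $\varphi_\ast\mathbb{G}_m=\mathbb{G}_m$ and $R^1\varphi_\ast\mathbb{G}_m=\underline{Z_G^{\vee}}$ (a line bundle on the gerbe restricts on each fibre $BZ_G$ to a character of $Z_G$, and $\mathrm{Pic}(BZ_G)=Z_G^{\vee}$). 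The five-term exact sequence therefore reads
$$0\to \mathrm{Pic}(M_C(G)^{\mathrm{rs}})\to \mathrm{Pic}(\mathcal{M}^{\mathrm{rs}})\xrightarrow{\psi} Z_G^{\vee}\xrightarrow{d_2}\mathrm{Br}(M_C(G)^{\mathrm{rs}})\to \mathrm{Br}(\mathcal{M}^{\mathrm{rs}})=0,$$
so that $\mathrm{Br}(M_C(G)^{\mathrm{rs}})\cong Z_G^{\vee}/\mathrm{im}(\psi)$, and the class $d_2(\chi)$ attached to a character $\chi$ is exactly the Brauer class of the $\mathbb{G}_m$-gerbe obtained by pushing $\varphi$ forward along $\chi$. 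This produces the homomorphism $Z_G^{\vee}\to\mathrm{Br}(M_C(G)^{\mathrm{rs}})$ and shows at once that it is surjective.

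It remains to show that the edge map $\psi$ vanishes; $\psi$ sends a line bundle to the character by which the central automorphisms act on its fibres. By the generation results for simply connected $G$, every class in $\mathrm{Pic}(\mathcal{M}_C(G))$ is represented by a determinant-of-cohomology bundle $\mathcal{L}_\rho$ attached to a representation $\rho\colon G\to\mathrm{GL}(V_\rho)$, which I may take irreducible, so it suffices to treat these. A central element $z\in Z_G$ acts on $V_\rho$ by the scalar $\lambda_\rho(z)$, where $\lambda_\rho\in Z_G^{\vee}$ is the central character of $\rho$, hence on the associated bundle $E_\rho$, and therefore on the fibre $\det R\Gamma(C,E_\rho)$ of $\mathcal{L}_\rho$ by $\lambda_\rho(z)^{\chi(E_\rho)}$. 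Since $G$ is semisimple, $\rho(G)\subset \mathrm{SL}(V_\rho)$, whence $\lambda_\rho(z)^{\dim V_\rho}=\det\rho(z)=1$; and since $G$ is simply connected, $E_\rho$ has degree $0$, so $\chi(E_\rho)=\dim(V_\rho)\,(1-g)$ is a multiple of $\dim V_\rho$. Therefore the scalar is $1$ for every $z$, i.e. $\psi=0$. Then $d_2$ is an isomorphism and $\mathrm{Br}(M_C(G)^{\mathrm{rs}})\cong Z_G^{\vee}$, which together with the first step would complete the proof.
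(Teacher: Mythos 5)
Your step (2) --- the five-term sequence of the $Z_G$-gerbe $\varphi$, giving ${\rm Br}(M_C(G)^{\rm rs})\cong Z_G^{\vee}/{\rm im}({\rm wt})$ once ${\rm Br}({\mathcal M}^{\rm rs})=0$ is known --- is exactly the paper's Lemma \ref{pic-bra} and Theorem \ref{gerbe-sc}, and your weight computation for determinant bundles is correct as far as it goes. The fatal gap is in step (3), where you assert that every class in ${\rm Pic}({\mathcal M}_C(G))$ is represented by a determinant-of-cohomology bundle ${\mathcal L}_\rho$. This is false for most simply connected groups: by \cite{KN} and \cite{BLS}, for $G$ almost simple and simply connected one has ${\rm Pic}({\mathcal M}_C(G))\cong{\mathbb Z}$, and ${\mathcal L}_\rho$ sits at level $d_\rho$, the Dynkin index of $\rho$, so the determinant bundles generate only the proper subgroup $d_G{\mathbb Z}$, where $d_G=\gcd_\rho d_\rho$. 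One has $d_G=2$ for ${\rm Spin}_n$ ($n\ge 7$), where the ample generator is the Pfaffian bundle whose \emph{square} is a determinant bundle (the paper relies on precisely this in Section \ref{sec7}), and $d_G=6,\,12$ for $E_6,\,E_7$. Since ${\rm wt}$ takes values in the finite group $Z_G^{\vee}$, vanishing on a finite-index subgroup of ${\mathbb Z}$ proves nothing about the generator: it only shows that ${\rm wt}({\mathcal L}_1)$ is killed by $d_G$, which is no constraint at all whenever the exponent of $Z_G^{\vee}$ divides $d_G$ (e.g.\ $E_7$, where $Z_G^{\vee}={\mathbb Z}/2{\mathbb Z}$ and $d_G=12$; or ${\rm Spin}_{4k}$, where $Z_G^{\vee}=({\mathbb Z}/2{\mathbb Z})^2$ and $d_G=2$). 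So your argument establishes ${\rm wt}=0$ only for groups admitting a representation of Dynkin index one, such as ${\rm SL}_n$ and ${\rm Sp}_{2n}$. The vanishing of the weight of the level-one generator \emph{is} true in the untwisted, simply connected case, but it is genuinely delicate: the analogous weight map for the twisted ${\rm Sp}_{2n}$-moduli space with $n$ odd is nonzero (Proposition \ref{SP}), so it cannot follow from any computation which, like yours, sees only determinant bundles and is blind to the level-one structure. The paper imports this fact as Proposition \ref{weight} (Proposition 7.4 of \cite{BH}), which identifies $\ker(p^*)$ with ${\rm Coker}({\rm ev}_G^{\delta})$ via the loop-group construction of the generating line bundle; for simply connected $G$ and $\delta=0$ the evaluation map is zero, giving ${\rm Br}(M_C(G)^{\rm rs})=Z_G^{\vee}$.

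A second gap, which you yourself flag, is step (1): you state as inputs that $H^2({\mathcal M}_C(G),{\mathcal O})=0$, that $H^3({\mathcal M}_C(G),{\mathbb Z})$ is torsion-free, and (implicitly) that the \'etale cohomological Brauer group of this ind-stack agrees with the analytic/topological one, and then concede that establishing these ``is where the bulk of the work lies.'' Those assertions are exactly the content that must be proved, and the paper's Theorem \ref{thm0} does the work by a different route: the descent spectral sequence for the uniformization ${\mathcal Q}_G\to{\mathcal M}_C(G)$, the vanishing of the Brauer group of the affine Grassmannian via its Bruhat cell structure (Proposition \ref{ind-gr}), Teleman's homotopy equivalence for $L_C(G)$ (Proposition \ref{LCG}), the surjectivity of ${\rm Pic}({\mathcal M}_C(G))\to{\rm Pic}({\mathcal Q}_G)$ from \cite{BLS}, and the comparison result Proposition \ref{key}. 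Until both gaps are filled --- and the one in step (3) requires a substantive new input about level-one bundles, not a patch --- the proposal does not constitute a proof.
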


If $G$ is not simply connected, then the Brauer group of the moduli
stack ${\mathcal M}_C(G)$ is computed in Theorem \ref{direct}
(in conjunction with Proposition \ref{LCG-2}). The Brauer group
of $M_C(G)^{\rm rs}$ is computed in Theorem \ref{th-m-s}.

We will describe a corollary of these computations.

Let $Z_{\widetilde G}$ be the center of the universal
cover $\widetilde G$ of $G$. Define
$$
\Psi(G)\,\subset\, 
{\rm Hom}(Z_{\widetilde G}\otimes_{\mathbb Z} 
Z_{\widetilde G}\, , {\mathbb Q}/{\mathbb Z})
$$
as in \eqref{PsiG}. Given an element $\delta \,\in \,\pi_1(G)
\, \subset\, Z_{\widetilde G}$,
define
$$
{\rm ev}_G^{\delta}\,:\, \Psi(G) \,\longrightarrow\,
{\rm Hom}(Z_{\widetilde G}/{\pi_1(G)}\, , {\mathbb Q}/{\mathbb Z})
$$
as in \eqref{evG}. The connected components of both
${\mathcal M}_C(G)$ and $M^{\rm ss}_C(G)$ are parametrized
by $\pi_1(G)$. For any $\delta\, \in \, \pi_1(G)$, let
${\mathcal M}_C(G)^\delta\, \subset\
{\mathcal M}_C(G)$ be the
connected component corresponding to $\delta$.
Let $M_C(G)^{\delta,{\rm rs}}$ be the smooth locus
of the connected component, corresponding to $\delta$, of
the moduli space $M^{\rm ss}_C(G)$.

We prove the following (see Corollary \ref{corl}):

\begin{theorem}\label{th-i2}
There is a short exact sequence
$$
0\,\longrightarrow \,{\rm Coker}({\rm
ev}^\delta_G)\,\longrightarrow\,
{\rm Br}(M_C(G)^{\delta,{\rm rs}}) \,\longrightarrow
\,{\rm Br}({\mathcal M}_C(G)^\delta)\,\longrightarrow \,0\, .
$$
\end{theorem}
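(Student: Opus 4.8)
The plan is to identify the homomorphism of the statement with pullback along a gerbe and to evaluate it by means of the Leray spectral sequence. First I would pass to the regularly stable locus on the stack side. Let $\mathcal{M}_C(G)^{\delta,{\rm rs}}\,\subset\, \mathcal{M}_C(G)^\delta$ be the open substack of regularly stable bundles; its complement parametrizes bundles that are unstable, strictly semistable, or stable but not regularly stable, and for $\text{genus}(C)\,\geq\, 3$ each of these loci has codimension at least two. By purity for the Brauer group of a smooth algebraic stack, the restriction
$$
{\rm Br}(\mathcal{M}_C(G)^\delta)\,\longrightarrow\, {\rm Br}(\mathcal{M}_C(G)^{\delta,{\rm rs}})
$$
is then an isomorphism, so it is enough to compare Brauer groups over the regularly stable locus.

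Since a regularly stable bundle has automorphism group exactly the center $Z_G\,=\, Z_{\widetilde G}/\pi_1(G)$, the coarse moduli morphism restricts to a gerbe
$$
q\,:\,\mathcal{M}_C(G)^{\delta,{\rm rs}}\,\longrightarrow\, M_C(G)^{\delta,{\rm rs}}
$$
banded by the finite abelian group $Z_G$, and the arrow of the statement is $q^*$ followed by the isomorphism above. I would then run the Leray spectral sequence $E_2^{p,q}\,=\, H^p(M_C(G)^{\delta,{\rm rs}},\, R^q q_*\mathbb{G}_m)\,\Longrightarrow\, H^{p+q}(\mathcal{M}_C(G)^{\delta,{\rm rs}},\,\mathbb{G}_m)$. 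The fibres are copies of the classifying stack $BZ_G$, so $R^1 q_*\mathbb{G}_m$ is the constant sheaf ${\rm Hom}(Z_G,\,\mathbb{Q}/\mathbb{Z})\,=\,{\rm Hom}(Z_{\widetilde G}/\pi_1(G),\,\mathbb{Q}/\mathbb{Z})$, which is precisely the target of ${\rm ev}^\delta_G$. From the low-degree exact sequence the image of $q^*$ is the bottom graded piece $E_\infty^{2,0}$ of the filtration on ${\rm Br}(\mathcal{M}_C(G)^{\delta,{\rm rs}})$, while its kernel is the image of the transgression
$$
d_2\,:\, E_2^{0,1}\,=\,{\rm Hom}(Z_{\widetilde G}/\pi_1(G),\,\mathbb{Q}/\mathbb{Z})\,\longrightarrow\, E_2^{2,0}\,=\,{\rm Br}(M_C(G)^{\delta,{\rm rs}})\, ,
$$
which sends a character $\chi$ to the pushforward $\chi_*[q]$ of the class $[q]\,\in\, H^2(M_C(G)^{\delta,{\rm rs}},\,Z_G)$ of the gerbe. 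Comparing $E_\infty^{2,0}$ with the explicit description of ${\rm Br}(\mathcal{M}_C(G)^\delta)$ in Theorem \ref{direct} yields the surjectivity of $q^*$.

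The heart of the argument, and the step I expect to be the main obstacle, is the identification $\ker(d_2)\,=\,{\rm image}({\rm ev}^\delta_G)$. For this I would compute the gerbe class $[q]$ in terms of the pairing data that define $\Psi(G)$, and show that $\chi_*[q]$ vanishes in ${\rm Br}(M_C(G)^{\delta,{\rm rs}})$ exactly when $\chi\,=\,\psi(\delta,\,-)$ for some $\psi\,\in\,\Psi(G)$, that is, when $\chi$ lies in the image of evaluation at the component label $\delta\,\in\,\pi_1(G)$. Granting this, $d_2$ factors as the surjection ${\rm Hom}(Z_{\widetilde G}/\pi_1(G),\,\mathbb{Q}/\mathbb{Z})\,\longrightarrow\,{\rm Coker}({\rm ev}^\delta_G)$ followed by an injection into ${\rm Br}(M_C(G)^{\delta,{\rm rs}})$ whose image is $\ker(q^*)$, and the desired short exact sequence follows. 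As a consistency check, if $G$ is simply connected then $\pi_1(G)\,=\,0$ and ${\rm ev}^0_G\,=\,0$, so ${\rm Coker}({\rm ev}^0_G)\,=\,Z^\vee_G$ and the sequence recovers Theorem \ref{th-i1}.
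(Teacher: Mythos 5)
Your skeleton agrees with the paper's up to a point: you restrict to the regularly stable locus using the codimension--two statement (Proposition \ref{bis-hoff}), you use the gerbe $p\,:\,{\mathcal M}_C(G)^{\delta,{\rm rs}}\to M_C(G)^{\delta,{\rm rs}}$ banded by $Z_G$ and its low--degree Leray sequence (this is exactly Lemma \ref{pic-bra}, whose map $\beta_*$ is your transgression $d_2$), and you reduce exactness at the left and middle terms to the identification $\ker(d_2)\,=\,{\rm image}({\rm ev}^\delta_G)$. That identification --- the step you call the heart of the argument and the main obstacle --- is Proposition \ref{weight}, which the paper itself does not prove but quotes from \cite{BH} (Proposition 7.4 there); so deferring it is legitimate, and it is not where the paper's new work for this theorem lies.

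The genuine gap is the surjectivity of $p^*$ on Brauer groups, which you dispose of in one sentence (``comparing $E_\infty^{2,0}$ with Theorem \ref{direct} yields the surjectivity''). The Leray spectral sequence only identifies ${\rm image}(p^*)$ with the bottom graded piece $E_\infty^{2,0}$ of the filtration on $H^2({\mathcal M}_C(G)^{\delta,{\rm rs}},\,{\mathbb G}_m)$; surjectivity is equivalent to the vanishing of $E_\infty^{1,1}$ and $E_\infty^{0,2}$, and neither vanishes for formal reasons. Indeed $E_2^{1,1}\,=\,H^1(M_C(G)^{\delta,{\rm rs}},\,{\rm Hom}(Z_G,{\mathbb G}_m))$ is nonzero in general, because the moduli space has nontrivial finite \'etale covers (the $\Gamma$--cover of Lemma \ref{lema}(2), $\Gamma=H^1(C,\pi_1(G))$, which is responsible for the $H^1(\Gamma,{\mathbb C}^*)$ and $H^2(\Gamma,{\mathbb C}^*)$ terms in Theorem \ref{th-m-s}); and $E_2^{0,2}$ receives a contribution from $H^2(Z_G,{\mathbb C}^*)$, which is nonzero whenever $Z_G$ is non--cyclic (e.g.\ quotients of ${\rm Spin}_{4l}$). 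So one must prove that the relevant differentials kill these terms, and that is a statement of the same strength as the surjectivity itself; Theorem \ref{direct} cannot be ``compared'' against it, since it describes ${\rm Br}({\mathcal M}_C(G)^\delta)$ as a quotient of $H^2(BL_C(G),{\mathbb C}^*)$ via the loop--group descent sequence for ${\mathcal Q}_{\widetilde G}\to{\mathcal M}_C(G)^\delta$, with no a priori link to classes pulled back from the coarse space. What the paper actually does (Proposition \ref{gerbe-final}) is to supply the missing compatible description of the coarse space: it realizes $M_C(G)^{\delta,{\rm rs}}$ as the quotient of the regularly stable open part ${\mathcal Q}_1\subset{\mathcal Q}_{\widetilde G}$ by the group $L_C(G)/Z_G$, obtains a commutative square comparing $H^2(B(L_C(G)/Z_G),\,{\mathbb C}^*)\to {\rm Br}(M_C(G)^{\delta,{\rm rs}})$ with $H^2(BL_C(G),\,{\mathbb C}^*)\to {\rm Br}({\mathcal M}_C(G)^{\delta,{\rm rs}})$, and then proves that $H^2(B(L_C(G)/Z_G),\,{\mathbb C}^*)\to H^2(BL_C(G),\,{\mathbb C}^*)$ is surjective by applying Proposition \ref{LCG-2}(2) to both groups and matching the two resulting extensions. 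Without this (or an equivalent) input, your proposal establishes the injectivity of ${\rm Coker}({\rm ev}^\delta_G)\to{\rm Br}(M_C(G)^{\delta,{\rm rs}})$ and exactness in the middle, but not the surjection onto ${\rm Br}({\mathcal M}_C(G)^\delta)$.
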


The Brauer groups for the moduli spaces of principal bundles with
a classical group as the structure group are computed in Section
\ref{sec7}. As a consequence, the twisted moduli space of ${\rm
Sp}_{2n} ({\mathbb C})$--bundles is locally factorial if $n$ is
odd, and it is not locally factorial $n$ is even. (See
Corollary \ref{cor1}.)

We note that among the exceptional groups, $G_2$, $F_4$ and $E_8$
have the property that the center is trivial. Therefore, from
Theorem \ref{th-i1} it follows that for these three groups
$$
{\rm Br}({\mathcal M}_C(G))\,=\,0\,=\,{\rm Br}(M_C(G)^{\rm rs})\, .
$$

\medskip
\noindent
\textbf{Acknowledgements.}\, We are grateful to the referee for
providing helpful comments.

\section{Preliminaries}\label{sec2}

As mentioned before, the base field
is $\mathbb C$. For an algebraic
stack $Y$, the torsion group $H^2_{{\text {\'et}}}(Y,\, {\mathbb 
G}_m)_{{\rm torsion}}$ is called the \textit{cohomological
Brauer group} of $Y$. A theorem of Grothendieck says that
if $Y$ is a smooth variety, then $H^2_{{\text {\'et}}}(Y,\, {\mathbb 
G}_m)$ is torsion \cite{Mi}. If $Y$ is a quasiprojective variety,
then a theorem of Gabber says that
$H^2_{{\text {\'et}}}(Y,\, {\mathbb G}_m)_{{\rm torsion}}$
coincides with the Brauer group defined by the Morita equivalence
classes of Azumaya algebras over $Y$; see \cite[Theorem 1.1]{dJ}
for this theorem of Gabber.

As mentioned in the introduction,
we are interested in computing the Brauer groups of the moduli stack of
principal $G$--bundles as well as the smooth
locus of the moduli space of semistable principal $G$--bundles over a 
smooth projective curve. Since the smooth locus of the moduli space is 
a smooth quasiprojective variety, the above theorems of Grothendieck
and Gabber imply that all the above
three groups associated to it coincide.

For algebraic stacks, we will only consider the cohomological
Brauer group. It should be mentioned that the moduli stack of
principal $G$--bundles over a smooth projective curve is smooth.

\begin{proposition} \label{key}
Let $Y$ be an algebraic stack satisfying the following two
properties: 
\begin{itemize}
\item any class in $H^2(Y,\, {\mathbb Z})$ is represented by a
holomorphic line bundle on $Y$, and

\item each holomorphic line bundle on $Y$ admits an algebraic
structure.
\end{itemize}
Then there are isomorphisms
$$
H^2_{{\text {\'et}}}(Y,{\mathbb G}_m)_{{\rm torsion}}\,\cong\,
H^2(Y,\, {\mathcal O}_{X,{\rm an}}^*)_{{\rm torsion}} \,\cong\,
H_B^3(Y,\, {\mathbb Z})_{{\rm torsion}}\, .
$$
\end{proposition}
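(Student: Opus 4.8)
The plan is to prove the two isomorphisms separately: the right-hand one from the exponential sequence together with the first hypothesis, and the left-hand one from the two Kummer sequences together with the comparison theorem and the second hypothesis.

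First I would treat $H^2(Y,\,{\mathcal O}^*_{Y,{\rm an}})_{\rm torsion}\,\cong\, H^3_B(Y,\,{\mathbb Z})_{\rm torsion}$ using the long exact sequence coming from the exponential sequence $0\to{\mathbb Z}\to{\mathcal O}_{Y,{\rm an}}\to{\mathcal O}^*_{Y,{\rm an}}\to 0$, namely
$$
H^2(Y,{\mathbb Z}) \stackrel{\alpha}{\longrightarrow} H^2(Y,{\mathcal O}_{Y,{\rm an}}) \stackrel{j}{\longrightarrow} H^2(Y,{\mathcal O}^*_{Y,{\rm an}}) \stackrel{\delta}{\longrightarrow} H^3(Y,{\mathbb Z}) \stackrel{\beta}{\longrightarrow} H^3(Y,{\mathcal O}_{Y,{\rm an}}).
$$
The first hypothesis says exactly that the Chern class map $H^1(Y,{\mathcal O}^*_{Y,{\rm an}})\to H^2(Y,{\mathbb Z})$ is surjective, which by exactness forces $\alpha=0$; hence $j$ is injective and one gets a short exact sequence
$$
0 \longrightarrow H^2(Y,{\mathcal O}_{Y,{\rm an}}) \stackrel{j}{\longrightarrow} H^2(Y,{\mathcal O}^*_{Y,{\rm an}}) \stackrel{\delta}{\longrightarrow} {\rm Im}(\delta) \longrightarrow 0
$$
whose kernel $H^2(Y,{\mathcal O}_{Y,{\rm an}})$ is a ${\mathbb C}$-vector space, hence divisible and torsion-free. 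A short diagram chase shows that for any short exact sequence with divisible torsion-free kernel the induced map on torsion subgroups is an isomorphism, so $H^2(Y,{\mathcal O}^*_{Y,{\rm an}})_{\rm torsion}\,\cong\,{\rm Im}(\delta)_{\rm torsion}$. Finally, since $H^3(Y,{\mathcal O}_{Y,{\rm an}})$ is torsion-free, the map $\beta$ annihilates all torsion; thus $H^3_B(Y,{\mathbb Z})_{\rm torsion}\subseteq\ker\beta={\rm Im}(\delta)$, giving ${\rm Im}(\delta)_{\rm torsion}=H^3_B(Y,{\mathbb Z})_{\rm torsion}$.

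For the isomorphism $H^2_{{\text {\'et}}}(Y,{\mathbb G}_m)_{\rm torsion}\,\cong\, H^2(Y,{\mathcal O}^*_{Y,{\rm an}})_{\rm torsion}$ I would compare the étale Kummer sequence $0\to\mu_n\to{\mathbb G}_m\to{\mathbb G}_m\to 0$ with its analytic counterpart $0\to\mu_n\to{\mathcal O}^*_{Y,{\rm an}}\to{\mathcal O}^*_{Y,{\rm an}}\to 0$ (the $n$-th power map being surjective analytically, as nonvanishing holomorphic functions admit local $n$-th roots). Each produces a short exact sequence $0\to H^1/n\to H^2(Y,\mu_n)\to {}_nH^2\to 0$, with $H^1,H^2$ taken in ${\mathbb G}_m$- respectively ${\mathcal O}^*_{Y,{\rm an}}$-coefficients, and the comparison theorem identifies the middle terms $H^2_{{\text {\'et}}}(Y,\mu_n)\cong H^2_B(Y,\mu_n)$. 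The second hypothesis makes the left vertical arrow $H^1_{{\text {\'et}}}(Y,{\mathbb G}_m)/n\to H^1(Y,{\mathcal O}^*_{Y,{\rm an}})/n$ surjective, since every holomorphic line bundle is algebraic. A diagram chase in which the middle map is an isomorphism and the left map is surjective then yields ${}_nH^2_{{\text {\'et}}}(Y,{\mathbb G}_m)\cong {}_nH^2(Y,{\mathcal O}^*_{Y,{\rm an}})$, compatibly in $n$. Passing to the direct limit over $n$ ordered by divisibility, and noting that the torsion subgroup is the union of its $n$-torsion parts, gives the stated isomorphism.

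The main obstacle is the comparison isomorphism $H^2_{{\text {\'et}}}(Y,\mu_n)\cong H^2_B(Y,\mu_n)$ invoked in the second step. For a scheme this is Artin's comparison theorem with finite coefficients, but for an algebraic stack $Y$ one must either pass to a smooth simplicial atlas and descend, or appeal to the stacky version of the comparison theorem, and correspondingly check that the exponential and Kummer formalisms make sense on $Y$ and that the two middle terms really are identified. Once that comparison is in hand, everything else reduces to the elementary homological facts that ${\mathbb C}$-vector spaces are divisible and torsion-free and to the two diagram chases above.
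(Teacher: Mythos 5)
Your proposal is correct and follows essentially the same route as the paper: the Kummer sequences (\'etale and analytic) together with the finite-coefficient comparison isomorphism give the first isomorphism, and the exponential sequence together with the first hypothesis gives the second, with the two hypotheses entering exactly where the paper's ``straight-forward diagram chase'' uses them. Your write-up merely makes explicit the chases and the divisibility/torsion-freeness arguments that the paper leaves implicit, and your closing caveat about the comparison theorem for stacks is a point the paper likewise asserts without proof.
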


\begin{proof}
For finite coefficients, we have a comparison isomorphism between
the \'etale cohomology and the Betti cohomology.
Let $\mu_n\,\subset\, {\mathbb C}^*$ be the group of $n$--th
roots of $1$.
By the Kummer sequence, any element in $H^2_{\text{\'et}}(Y,\,{\mathbb
G}_m)_{{\rm torsion}}$ (respectively, 
$H^2_{\rm an}(Y,\, {\mathcal O}_{Y,{\rm an}}^*)_{{\rm torsion}}$) is 
represented
by a class in $H^2_{{\text {\'et}}}(Y,\, {\mu}_n)$ 
(respectively, $H^2(Y,\,{\mathbb Z}/n{\mathbb Z})$) for
some $n$. This implies the surjectivity of the homomorphism
$$
H^2_{{\text {\'et}}}(Y,\, {\mathbb G}_m)_{{\rm torsion}}\,
\longrightarrow\, H^2(Y,\,{\mathcal O}_{Y,{\rm an}}^*)_{{\rm
torsion}}\, ,
$$
while its injectivity follows from the assumptions by a
straight--forward diagram chase.

The second isomorphism in the proposition is
derived using the exact sequence
$$
0\, \longrightarrow\,{\mathbb Z}\, \longrightarrow\,
{\mathcal O}_{Y,{\rm an}}\,\longrightarrow\,
{\mathcal O}_{Y,{\rm an}}^*\,\longrightarrow\, 0
$$
associated to the homomorphism $f\,\longmapsto\,
\exp(2\pi\sqrt{-1}f)$.
\end{proof}

See \cite[p. 25, Theorem 1]{Ne} for a similar result.

\section{The ind--Grassmannian and $L_C(G)$}

Let $G$ be a connected semisimple linear algebraic group over
$\mathbb C$. Let
$$
{\widetilde G}\,\longrightarrow\, G
$$
be the universal cover. The fundamental
group of $G$ will be denoted by $\pi_1(G)$; being abelian
$\pi_1(G)$ is independent of the choice of base point in $G$.
Let
$$
L(G)\,:=\,G((t)) ~\, ~\,\text{~and~}~\, ~\, L^+(G)\,:=\,G[[t]]
$$
be the loop group and its naturally defined subgroup scheme
respectively. Let 
$$
{\mathcal Q}_G\,:=\, L(G)/L^+(G)
$$
be the ind--Grassmannian; it is a direct limit of projective integral
varieties.

Let $C$ be an irreducible smooth complex projective curve with
$\text{genus}(C)\, \geq\, 3$. Fix a base
point $p_0\, \in\, C$. Let
\begin{equation}\label{llc}
L_C(G)\, :=\, G({\mathcal O}(C-p_0))\, \subset\, L(G)
\end{equation}
be the sub--ind--group scheme.

Let ${\mathcal M}_C(G)$ be the moduli stack of principal $G$--bundles on 
$C$.
The ``Uniformization theorem'' produces a canonical isomorphism
$$
L_C(G)\backslash L(G)/L^+(G)\,=\, L_C(G)\backslash
{\mathcal Q}_G\, \stackrel{\sim}{\longrightarrow}\,{\mathcal M}_C(G)
$$
\cite{F}, \cite{Te}, \cite{BLS}.

\begin{proposition}\label{ind-gr}
The cohomological Brauer group
$H^2_{{\text {\'et}}}({\mathcal Q}_G,\, {\mathcal
O}^*_{{\mathcal Q}_G})_{{\rm torsion}}$ of
${\mathcal Q}_G$ is trivial.
\end{proposition}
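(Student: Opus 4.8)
The plan is to apply Proposition \ref{key} to the ind-scheme $Y\,=\,{\mathcal Q}_G$, which reduces the assertion to two tasks: verifying the two bullet hypotheses of that proposition for ${\mathcal Q}_G$, and showing that $H_B^3({\mathcal Q}_G,\,{\mathbb Z})_{{\rm torsion}}\,=\,0$. The structural fact driving everything is that ${\mathcal Q}_G$ is the affine (ind-)Grassmannian, hence an increasing union of projective Schubert varieties ${\mathcal Q}_G^{(k)}$ and equipped with a Bruhat-type stratification into Schubert cells, each of which is isomorphic to an affine space ${\mathbb A}^n$ (the cells being indexed, on each connected component, by the cosets of the finite Weyl group in the affine Weyl group). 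First I would invoke this cell decomposition to conclude that the integral cohomology $H^*({\mathcal Q}_G^{(k)},\,{\mathbb Z})$ of each finite-dimensional exhausting Schubert variety is free abelian and concentrated in even degrees.

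Next I would pass to the limit. Writing ${\mathcal Q}_G\,=\,\varinjlim_k {\mathcal Q}_G^{(k)}$, the cohomology is recovered as the inverse limit $H^i({\mathcal Q}_G,\,{\mathbb Z})\,=\,\varprojlim_k H^i({\mathcal Q}_G^{(k)},\,{\mathbb Z})$ up to a $\lim^1$ correction; since each $H^i$ is finitely generated and free and the restriction maps stabilise degree by degree, the relevant $\lim^1$ terms vanish. Consequently $H^*({\mathcal Q}_G,\,{\mathbb Z})$ is itself torsion-free and vanishes in odd degrees. In particular $H^3({\mathcal Q}_G,\,{\mathbb Z})\,=\,0$, so that $H_B^3({\mathcal Q}_G,\,{\mathbb Z})_{{\rm torsion}}\,=\,0$, which settles the second task.

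It then remains to check the two hypotheses of Proposition \ref{key}. For the first, I would use the exponential sequence $0\,\to\,{\mathbb Z}\,\to\,{\mathcal O}_{{\mathcal Q}_G,{\rm an}}\,\to\,{\mathcal O}^*_{{\mathcal Q}_G,{\rm an}}\,\to\,0$ together with the vanishing $H^i({\mathcal Q}_G^{(k)},\,{\mathcal O})\,=\,0$ for $i\,>\,0$ (the Schubert varieties have rational singularities), which in the limit gives $H^1({\mathcal Q}_G,\,{\mathcal O})\,=\,H^2({\mathcal Q}_G,\,{\mathcal O})\,=\,0$. Hence the first Chern class map ${\rm Pic}({\mathcal Q}_G)\,\to\,H^2({\mathcal Q}_G,\,{\mathbb Z})$ is an isomorphism, so every degree-two integral class is represented by a holomorphic line bundle. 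The same identification, combined with the fact that the generating line bundles on ${\mathcal Q}_G$ (the ample determinant bundles on the Schubert varieties) are algebraic, supplies the second hypothesis: each holomorphic line bundle admits an algebraic structure. Proposition \ref{key} then yields the claimed triviality.

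The main obstacle I anticipate is the careful bookkeeping in the ind-scheme limit: ensuring the vanishing of the pertinent $\lim^1$ terms and that the comparison between analytic and algebraic line bundles, and between \'etale and Betti cohomology, remains valid on the direct limit rather than only at each finite stage. When $G$ is not simply connected, ${\mathcal Q}_G$ has $|\pi_1(G)|$ connected components indexed by $\pi_1(G)$, and the cell decomposition must be applied on each component; this adds no essential difficulty, since the argument is uniform across components, but it should be noted explicitly so that the conclusion covers all connected semisimple $G$.
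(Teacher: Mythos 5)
Your proposal is correct and takes essentially the same route as the paper: the Bruhat decomposition of ${\mathcal Q}_G$ into affine cells gives even-dimensional, torsion-free cohomology of the exhausting generalized Schubert varieties, hence $H^3({\mathcal Q}_G,\,{\mathbb Z})_{\rm torsion}\,=\,0$ after passing to the limit, and Proposition \ref{key} then yields the triviality of the cohomological Brauer group. The only differences are organizational: the paper verifies the two hypotheses of Proposition \ref{key} by citing \cite[p.~157, Lemma 2.2]{KN} rather than re-deriving them from the exponential sequence and rational singularities of Schubert varieties, and it treats non-simply-connected groups and products of almost simple factors by reduction (via Lemma \ref{lem00} and the K\"unneth decomposition) rather than running the cell argument uniformly on each component.
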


\begin{proof}
We begin by recalling a lemma from \cite{BLS} (see
\cite[p. 185, Lemma 1.2]{BLS}).

\begin{lemma}[\cite{BLS}]\label{lem00}
The quotient map $L(G)\,\longrightarrow\,{\mathcal Q}_G$ induces a 
bijection 
$$
\pi_1(G)\,=\, \pi_0(L(G))\,\longrightarrow \,\pi_0({\mathcal Q}_G)\, .
$$
Each connected component of ${\mathcal Q}_G$ is isomorphic to
${\mathcal Q}_{{\widetilde G}}$, where $\widetilde G$ is the
universal cover of $G$.
\end{lemma}

In view of Lemma \ref{lem00}, it is enough to prove the
proposition for simply connected groups.

First assume that the group $G$ is almost simple and simply connected.
 
Fix a Borel subgroup $B\, \subset\, G$ together with a maximal torus 
$T\,\subset\,B$ of $G$. The
homomorphism of rings ${\mathbb 
C}[[t]]\,\longrightarrow\, {\mathbb C}$ defined
by $t\, \longmapsto\, 0$ induces a homomorphism 
$L^+(G)\,\longrightarrow\, G$. Let
$$
{\mathcal B}\,\subset\, L^+(G)
$$
be the inverse image of the Borel subgroup
$B$ under this homomorphism. We recall that
${\mathcal B}$ is called the standard Borel subgroup of $L(G)$
associated to $B$. Let $N(T)$ be the normalizer of $T$ in $G$.
Let
$$
{\widetilde W}\,:=\, {\rm Mor}({\mathbb C}^*, N(T))/T
$$
be the affine Weyl group containing the Weyl group $W\,=\,N(T)/T$
as a subgroup (the constant morphisms from ${\mathbb C}^*$
to $N(T)$ make $N(T)$ a subgroup of
${\rm Mor}({\mathbb C}^*, N(T))$).

The ind--Grassmannian ${\mathcal Q}_G$ has the Bruhat decomposition
$$
{\mathcal Q}_G\,=\, \bigcup_{w\in {\widetilde W}/W}{\mathcal B}w'L^+(G)/
L^+(G)\, ,
$$
where $w'\in {\widetilde W}$ is any chosen representative of $w$.
The quotient space ${\widetilde W}/W$ has a partial ordering
inherited from the Bruhat partial ordering of the affine Weyl group 
${\widetilde W}$.
This makes ${\mathcal Q}_G$ a direct limit of projective
varieties $\{Q_w\}_{w\in {\widetilde W}/W}$, where
\begin{equation}\label{gsv}
Q_w\,:=\, \bigcup_{v \le w}{\mathcal B}vL^+(G)/L^+(G)\, .
\end{equation}
For any $w \,\in\, {\widetilde W}/W$, consider the length of
every element in the coset $w$. The smallest one among them
will be denoted by $\ell (w)$.

The generalized Schubert variety 
$Q_w$ in \eqref{gsv} has a Zariski
open subset defined by ${\mathcal B}wL^+(G)/L^+(G)$ which is
biregularly isomorphic with the affine space ${\mathbb A}^{\ell (w)}$,
where $\ell (w)$ is defined above.
As a result, $Q_w$ equipped with the analytic topology
has the structure of a CW complex with only even dimensional cells.
Hence $H^3(Q_w,\,{\mathbb Z})\,=\,0$.
This implies that
$$
H^3({\mathcal Q}_G,\, {\mathbb Z})\,=\, 0
$$
because ${\mathcal Q}_G$ is a direct limit of these varieties $Q_w$.
In particular,
\begin{equation}\label{e-Q-v}
H^3(Q_w,\,{\mathbb Z})_{\rm torsion}\,=\,0\,=\,
H^3({\mathcal Q}_G,\,{\mathbb Z})_{\rm torsion}\, .
\end{equation}
Similarly, we have
\begin{equation}\label{e1}
H^1({\mathcal Q}_G,\,{\mathbb Z})\,=\, 0\, .
\end{equation}

Both $Q_w$ and ${\mathcal Q}_G$ satisfy the two assumptions 
in Proposition \ref{key} \cite[p. 157, Lemma 2.2]{KN}. 
Therefore, the proposition
follows from Proposition \ref{key} and \eqref{e-Q-v} under the
assumption that $G$ is almost simple and simply connected.

In the general case where $G$ is semisimple and simply connected,
write $G$ as a product
$$
G\,=\, \prod_{i=1}^s G_i\, ,
$$
where each $G_i$ is almost simple and simply connected.
This enables us to write the ind--Grassmannian
${\mathcal Q}_G$ as a product
$$
{\mathcal Q}_G\,=\, \prod_{i=1}^s {\mathcal Q}_{G_i}\, ,
$$
where ${\mathcal Q}_{G_i}$ is the ind--Grassmannian for $G_i$.
In view of \eqref{e-Q-v} and \eqref{e1} for $G_i$, from the
K\"unneth decomposition of $H^3(\prod_{i=1}^s {\mathcal Q}_{G_i},\, 
{\mathbb Z})$ we conclude that $H^3({\mathcal Q}_G,\,{\mathbb
Z})_{\rm torsion}\,=\, 0$. The two conditions in Proposition
\ref{key} hold for ${\mathcal Q}_G$ \cite[p. 157, Lemma 2.2]{KN}.
Now the proof of the proposition is completed using Proposition 
\ref{key}.
\end{proof}

\subsection{Some properties of $L_C(G)$}
Let $G$ be semisimple and simply connected. The topological
properties of the ind--group scheme $L_C(G)$ that we need
can be derived from the following theorem of
Teleman (\cite[p. 8, Theorem 1]{Te}):

\begin{theorem}[\cite{Te}]\label{thm-T}
The natural map $L_C(G)\,\longrightarrow\, C^{\infty}
(C\setminus\{p_0\},G)$ defines a homotopy equivalence. Hence
the homotopy type of $L_C(G)$, equipped with the analytic
topology, is that of $G\times\Omega G^{2g}$.
\end{theorem}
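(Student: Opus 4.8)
The plan is to separate the statement into its two assertions and prove them by different means. The second assertion (the identification of the homotopy type) will follow once I compute the homotopy type of the target $C^{\infty}(C\setminus\{p_0\},G)$ directly; the first assertion (that the natural inclusion is a homotopy equivalence) is the substantive point and will be handled by complex-analytic methods, using that the algebraic map $L_C(G)=G(\mathcal{O}(C-p_0))\longrightarrow C^{\infty}(C\setminus\{p_0\},G)$ factors through the space of \emph{holomorphic} maps on the analytification.

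First I would pin down the homotopy type of the smooth mapping space. The open Riemann surface $U:=C\setminus\{p_0\}$ deformation retracts onto a wedge $\bigvee_{i=1}^{2g}S^1$ of $2g$ circles, where $g=\text{genus}(C)$, so $\text{Map}(-,G)$ gives a homotopy equivalence $C^{\infty}(U,G)\simeq\text{Map}(\bigvee_{i=1}^{2g}S^1,G)$ (passing freely between smooth and continuous maps, since that inclusion is itself an equivalence). Because $G$ is a topological group, left translation furnishes a homeomorphism
$$
\text{Map}(X,G)\;\cong\;G\times\text{Map}_*(X,G),\qquad f\longmapsto\bigl(f(x_0),\,f(x_0)^{-1}\!\cdot f\bigr),
$$
for any pointed space $(X,x_0)$, and a based map out of a wedge is a tuple of based maps out of the summands, so $\text{Map}_*(\bigvee_{i=1}^{2g}S^1,G)=(\Omega G)^{2g}$. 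Combining these yields $C^{\infty}(U,G)\simeq G\times\Omega G^{2g}$, which is the asserted homotopy type.

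For the homotopy equivalence itself I would exploit that $U$ is a Stein manifold and $G$ is a complex Lie group. Grauert's Oka principle then applies: the inclusion $\text{Hol}(U,G)\hookrightarrow C^{\infty}(U,G)$ of holomorphic into smooth maps is a weak homotopy equivalence (and an honest one after passing to CW type). Since every regular map $U\to G$ analytifies to a holomorphic one, the inclusion of $L_C(G)$ factors as $L_C(G)\to\text{Hol}(U,G)\to C^{\infty}(U,G)$, so it remains to show that $L_C(G)\hookrightarrow\text{Hol}(U,G)$ is a homotopy equivalence. For this I would write $L_C(G)$ as the direct limit of the finite-type subschemes cut out by bounding the pole order at $p_0$, embed $G\subset\text{GL}_n$ to reduce to matrix entries, and use an algebraic (Runge-type) approximation of holomorphic functions on the affine curve by regular ones, together with the group structure, to build the required homotopies.

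I expect this last comparison, algebraic versus holomorphic maps, to be the main obstacle. The Oka--Grauert step is a clean black box once Steinness of $U$ is invoked, and smoothing of continuous maps is routine; the delicate point is upgrading holomorphic to algebraic, which requires the approximation to be uniform enough to produce homotopies lying \emph{inside} $L_C(G)$, and requires checking that the ind-scheme topology on $L_C(G)$ is compatible with the analytic topology used throughout the mapping-space comparison. The exhaustion of $L_C(G)$ by its finite-dimensional strata is the natural device for organizing this bookkeeping.
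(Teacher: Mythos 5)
A preliminary remark: the paper offers no proof of this statement at all --- it is quoted from Teleman (\cite[p.~8, Theorem~1]{Te}) and used as a black box --- so your proposal has to be judged against the mathematics itself rather than against an argument in the text. Your first half is correct and routine: $U=C\setminus\{p_0\}$ retracts onto a wedge of $2g$ circles, the translation trick splits the free mapping space as $G\times \text{Map}_*(\bigvee_{i=1}^{2g} S^1,G)=G\times(\Omega G)^{2g}$, and passing between smooth and continuous maps is harmless. The reduction of the main assertion via Grauert's Oka principle --- the inclusion of holomorphic into continuous maps from the Stein surface $U$ to the complex Lie group $G$ is a (weak) homotopy equivalence --- is also a legitimate, standard step.

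The genuine gap is the step you yourself flag as ``the main obstacle'': the comparison of algebraic with holomorphic maps. This is not bookkeeping to be organized by the ind-scheme exhaustion; it is the entire content of the theorem, and the method you propose for it fails. First, Runge/Cartan-type approximation of the matrix entries of a holomorphic map $U\to G\subset \mathrm{GL}_n$ produces regular maps into the matrix algebra, not into $G$, and there is no algebraic retraction of a neighborhood of $G$ onto $G$ with which to correct this. Second, even granting density of regular maps among holomorphic ones in the compact--open topology, density of a subspace never by itself yields a weak homotopy equivalence; one needs a parametric, relative approximation-and-deformation scheme, which is exactly what is known to break down in the algebraic (as opposed to holomorphic) category. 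The decisive objection is that your argument nowhere uses that $G$ is semisimple and simply connected, yet the statement is false without such a hypothesis: for $G=\mathbb{G}_m\subset\mathrm{GL}_1$ every regular map $U\to\mathbb{C}^*$ is constant (a unit of ${\mathcal O}(C-p_0)$ has divisor supported at $p_0$ and of degree zero, hence trivial), whereas $\pi_0\bigl(C^{\infty}(U,\mathbb{C}^*)\bigr)=H^1(U,\mathbb{Z})=\mathbb{Z}^{2g}$; your embed-and-approximate scheme applies verbatim to this case and would ``prove'' a false statement. Any correct treatment of the algebraic-to-holomorphic step must exploit the special structure of simply connected semisimple groups --- generation by unipotent root subgroups (maps to $\mathbb{G}_a$ being genuinely approximable), Birkhoff-type factorizations, or the Schubert-cell geometry of the affine Grassmannian --- and this key input is missing from your plan.
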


The connectedness and simply connectedness of
$L_C(G)$ follow immediately from Theorem \ref{thm-T}.

\begin{proposition}\label{LCG}
Let $G$ be semisimple and simply connected. Let $BL_C(G)$ be the
classifying space for $L_C(G)$. Then
\begin{enumerate}
\item $H^1(BL_C(G),\, {\mathbb Z})\,=\,0$,
\item $H^2(BL_C(G),\, {\mathbb Z}/n{\mathbb Z})\,=\,0$
for all $n$, and
\item $H^2(BL_C(G),\, {\mathbb C}^*)\,=\,0$. 
\end{enumerate}
\end{proposition}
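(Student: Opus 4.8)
The plan is to compute the cohomology of the classifying space $BL_C(G)$ by exploiting the homotopy type of $L_C(G)$ provided by Theorem \ref{thm-T}, which identifies $L_C(G)$ up to homotopy with $G\times\Omega G^{2g}$. The key principle is that for a topological group $H$, the homotopy and cohomology of $BH$ are controlled by those of $H$ via the fibration $H\to EH\to BH$ (equivalently, the path-loop relation $\Omega BH\simeq H$), so that $\pi_k(BH)\cong\pi_{k-1}(H)$ and the low-degree cohomology of $BH$ can be read off from the low-degree homotopy of $H$. Since $G$ is assumed semisimple and simply connected, both $G$ and $\Omega G$ are connected, and the remarks following Theorem \ref{thm-T} already record that $L_C(G)$ is connected and simply connected.

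First I would establish the connectivity facts in homotopy. Because $L_C(G)\simeq G\times\Omega G^{2g}$ with $G$ simply connected and semisimple, we have $\pi_0(L_C(G))=0$ and $\pi_1(L_C(G))=0$; moreover $\pi_2(G)=0$ for any Lie group, while $\pi_2(\Omega G)\cong\pi_3(G)\cong\mathbb Z$ for $G$ simple (and a free abelian group in general), and crucially $\pi_1(\Omega G)\cong\pi_2(G)=0$. Passing to the classifying space, $\pi_1(BL_C(G))\cong\pi_0(L_C(G))=0$ and $\pi_2(BL_C(G))\cong\pi_1(L_C(G))=0$, so $BL_C(G)$ is $2$-connected. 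From $2$-connectivity, part (1) follows immediately since $H^1$ of a simply connected space vanishes, and the universal coefficient theorem gives part (2): $H^2(BL_C(G),\mathbb Z/n\mathbb Z)$ is built from $H_2(BL_C(G),\mathbb Z)$ and the torsion of $H_1$, both of which vanish by the Hurewicz theorem applied to the $2$-connected space $BL_C(G)$.

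For part (3), I would use the exponential sequence $0\to\mathbb Z\to\mathbb C\to\mathbb C^*\to 0$, or more directly the observation that $\mathbb C^*\simeq S^1$ as a topological group so that $H^2(BL_C(G),\mathbb C^*)$ sits in a long exact sequence relating it to $H^2(BL_C(G),\mathbb Z)$ and $H^3(BL_C(G),\mathbb Z)$. Since $BL_C(G)$ is $2$-connected we have $H^2(BL_C(G),\mathbb Z)=0$ by Hurewicz and universal coefficients, and the relevant portion of the long exact sequence shows $H^2(BL_C(G),\mathbb C^*)$ injects into $H^3(BL_C(G),\mathbb Z)$; but $\mathbb C$ is a vector space so $H^2(BL_C(G),\mathbb C)=0$ forces the boundary map, hence the whole term, to vanish once we know the integral $H^2$ and $H^3$ contributions behave correctly. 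Concretely, the short exact coefficient sequence yields $H^2(BL_C(G),\mathbb C)\to H^2(BL_C(G),\mathbb C^*)\to H^3(BL_C(G),\mathbb Z)$, and as $\mathbb C$ is divisible and torsion-free the image-to-the-right is the torsion-killing map; combining $H^2(\cdot,\mathbb C)=0$ with the vanishing of the relevant integral classes gives the result.

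The main obstacle I anticipate is \emph{not} the formal homotopy-theoretic machinery, which is routine, but rather being careful about the passage from the homotopy equivalence of Theorem \ref{thm-T} to statements about $BL_C(G)$: one must know that forming classifying spaces is a homotopy functor sending homotopy equivalences of (reasonable topological) groups to homotopy equivalences of their classifying spaces, so that $BL_C(G)\simeq B(G\times\Omega G^{2g})\simeq BG\times (B\Omega G)^{2g}\simeq BG\times G^{2g}$, using $B\Omega G\simeq G$. With this identification, $BL_C(G)\simeq BG\times G^{2g}$, and then the Künneth theorem together with the known vanishing of $H^1(BG,\mathbb Z)$, $H^2(BG,\mathbb Z)$ (for $G$ simply connected) and of $H^1(G,\mathbb Z)$, $H^2(G,\mathbb Z)$ reduces all three parts to a direct computation. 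Verifying these low-degree cohomology vanishings for $BG$ and $G$ in the simply connected semisimple case, and assembling them correctly through Künneth, is where the actual (if standard) work lies.
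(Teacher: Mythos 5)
Your core argument --- parts (1) and (2) via the $2$-connectivity of $BL_C(G)$ --- is exactly the paper's proof: Theorem \ref{thm-T} gives that $L_C(G)$ is connected and simply connected, the shifts $\pi_1(BL_C(G))\cong\pi_0(L_C(G))$ and $\pi_2(BL_C(G))\cong\pi_1(L_C(G))$ make $BL_C(G)$ $2$-connected, and Hurewicz plus universal coefficients do the rest. The paper also gets part (3) this way, more directly than you do: once $H_1(BL_C(G),\mathbb{Z})=H_2(BL_C(G),\mathbb{Z})=0$, universal coefficients with values in \emph{any} abelian group $A$ (in particular $A=\mathbb{C}^*$) give $H^2(BL_C(G),A)\cong\mathrm{Hom}(H_2,A)\oplus\mathrm{Ext}(H_1,A)=0$. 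Your exponential-sequence detour for (3) does work, but the phrase ``vanishing of the relevant integral classes'' must be made precise, because $H^3(BL_C(G),\mathbb{Z})$ itself does \emph{not} vanish: $\pi_3(BL_C(G))\cong\pi_2(L_C(G))\cong\pi_3(G)^{2g}\neq 0$, so by Hurewicz $H^3(BL_C(G),\mathbb{Z})\cong\mathrm{Hom}(H_3,\mathbb{Z})$ is a nonzero free abelian group. What saves the argument is that the Bockstein image of $H^2(BL_C(G),\mathbb{C}^*)$ is the kernel of $H^3(\cdot,\mathbb{Z})\to H^3(\cdot,\mathbb{C})$, i.e.\ the torsion subgroup, and that torsion subgroup equals $\mathrm{Ext}(H_2,\mathbb{Z})=0$.

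The one genuinely problematic point is your last paragraph, which you present as where the real work lies. The principle invoked there --- that forming classifying spaces turns homotopy equivalences of topological groups into homotopy equivalences of classifying spaces --- is valid only for maps that are group homomorphisms (or at least loop/$A_\infty$ maps); the homotopy type of the underlying space of a topological group does not determine the homotopy type of its classifying space. Teleman's map $L_C(G)\to C^{\infty}(C\setminus\{p_0\},G)$ is a homomorphism, but the further identification with $G\times\Omega G^{2g}$ is only an equivalence of spaces (the mapping group is a twisted product of $G$ with based loops, not a direct product of groups), so $BL_C(G)\simeq BG\times G^{2g}$ does not follow by the argument you sketch. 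Fortunately this entire step is unnecessary: parts (1)--(3) need only $\pi_0(L_C(G))=\pi_1(L_C(G))=0$, which are invariants of the underlying homotopy type, fed into $\pi_k(BH)\cong\pi_{k-1}(H)$ for the honest group $H=L_C(G)$ --- which is precisely what the paper does, and what your second and third paragraphs already accomplish.
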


\begin{proof}The group $H^1(BL_C(G),\, {\mathbb Z})$ parametrizes the 
space of all
continuous homomorphisms from $L_C(G)$ to ${\mathbb Z}$. So
$H^1(BL_C(G),\, {\mathbb Z})$ is trivial by the connectedness of $L_C(G)$.

The connectedness of $L_C(G)$ also implies that $\pi_1(BL_C(G))\,=\,
\pi_0(L_C(G))\,=\,0$. The simply connectedness of $L_C(G)$ implies
that $\pi_2(BL_C(G))\,=\,\pi_1 (L_C(G))\,=\,0$. Hence
statements (2) and (3)
of the proposition follow from the Hurewicz's theorem.
\end{proof}

\section{Brauer group of moduli: $G$ is simply 
connected}\label{sec4}

\begin{theorem} \label{thm0}
Let $G$ be simply connected and semisimple. Then
${\rm Br}({\mathcal M}_C(G))\,=\,0$.
\end{theorem}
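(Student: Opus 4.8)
The plan is to pass from the Brauer group to ordinary cohomology and then to exploit the uniformization $\mathcal M_C(G)\cong L_C(G)\backslash\mathcal Q_G$. Since $\mathcal M_C(G)$ is a smooth algebraic stack for which the two hypotheses of Proposition \ref{key} are known to hold (every integral degree-two class comes from a line bundle, and every holomorphic line bundle is algebraic), that proposition gives $\mathrm{Br}(\mathcal M_C(G))=H^2_{\text{\'et}}(\mathcal M_C(G),\mathbb G_m)_{\mathrm{torsion}}\cong H^3(\mathcal M_C(G),\mathbb Z)_{\mathrm{torsion}}$. So it is enough to prove that $H^3(\mathcal M_C(G),\mathbb Z)$ is torsion free.

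First I would rewrite the uniformization as the quotient stack presentation $\mathcal M_C(G)=[\mathcal Q_G/L_C(G)]$ and use the resulting homotopy-quotient fibration $\mathcal Q_G\to\mathcal M_C(G)\to BL_C(G)$. This carries a Leray--Serre spectral sequence $E_2^{p,q}=H^p(BL_C(G),H^q(\mathcal Q_G,\mathbb Z))\Rightarrow H^{p+q}(\mathcal M_C(G),\mathbb Z)$; as $L_C(G)$ is connected (Theorem \ref{thm-T}), the base $BL_C(G)$ is simply connected and all coefficient systems are trivial. One technical point to address is that $\mathcal Q_G$ is only an ind-variety, so the spectral sequence must be run compatibly over the exhausting family $\{Q_w\}$; this causes no trouble because each $Q_w$ is a finite CW complex with cells in even dimension only, so passage to the limit is harmless.

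Next I would insert the vanishing results already in hand. From the even-cell structure of $\mathcal Q_G$ used in the proof of Proposition \ref{ind-gr}, $H^1(\mathcal Q_G,\mathbb Z)=H^3(\mathcal Q_G,\mathbb Z)=0$ and $H^{\mathrm{even}}(\mathcal Q_G,\mathbb Z)$ is torsion free. On the base, part (1) of Proposition \ref{LCG} gives $H^1(BL_C(G),\mathbb Z)=0$, the $2$-connectedness of $BL_C(G)$ (established in the proof of that proposition) gives $H^2(BL_C(G),\mathbb Z)=0$, and part (2) together with the universal coefficient theorem forces $H^3(BL_C(G),\mathbb Z)$ to be torsion free. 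Hence among the terms with $p+q=3$ only $E_2^{3,0}=H^3(BL_C(G),\mathbb Z)$ is nonzero, and the single differential bearing on it is the transgression $d_3\colon H^2(\mathcal Q_G,\mathbb Z)\to H^3(BL_C(G),\mathbb Z)$ (the competing $d_2$ starts from $E_2^{1,1}=0$). Therefore $H^3(\mathcal M_C(G),\mathbb Z)\cong\mathrm{coker}(d_3)$.

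The heart of the argument, and the step I expect to be the main obstacle, is to show that this transgression vanishes---equivalently, that every class in $H^2(\mathcal Q_G,\mathbb Z)$ is a permanent cycle, i.e.\ lies in the image of the restriction $H^2(\mathcal M_C(G),\mathbb Z)\to H^2(\mathcal Q_G,\mathbb Z)$. I would prove this at the level of line bundles: $H^2(\mathcal Q_G,\mathbb Z)=\mathrm{Pic}(\mathcal Q_G)$ is generated by the Chern classes of the determinant (theta) line bundles on $\mathcal Q_G$, and for $G$ simply connected these line bundles descend to $\mathcal M_C(G)$, so that the restriction $\mathrm{Pic}(\mathcal M_C(G))\to\mathrm{Pic}(\mathcal Q_G)$ is surjective. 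Granting this, the generators of $H^2(\mathcal Q_G,\mathbb Z)$ extend to $\mathcal M_C(G)$, hence transgress to $0$, so $d_3=0$ and $H^3(\mathcal M_C(G),\mathbb Z)\cong H^3(BL_C(G),\mathbb Z)$ is torsion free; Proposition \ref{key} then yields $\mathrm{Br}(\mathcal M_C(G))=0$. The crux is thus exactly the surjectivity of $\mathrm{Pic}(\mathcal M_C(G))\to\mathrm{Pic}(\mathcal Q_G)$ (equivalently, the splitting over $L_C(G)$ of the canonical central extension of the loop group), and this is precisely the place where the simple connectedness of $G$ is used; by contrast the spectral-sequence bookkeeping is routine once Propositions \ref{ind-gr} and \ref{LCG} are available.
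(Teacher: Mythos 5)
Your reduction via Proposition \ref{key} and the spectral-sequence bookkeeping for ${\mathcal Q}_G\to{\mathcal M}_C(G)\to BL_C(G)$ are correct, and in skeleton this is the paper's own proof: the paper runs the same descent spectral sequence with the analytic sheaf ${\mathcal O}^*$ and concludes $H^2({\mathcal M}_C(G),{\mathcal O}^*)=H^2(BL_C(G),{\mathbb C}^*)=0$, and its Remark after Theorem \ref{thm0} even notes that a constant-coefficient variant like yours works. The genuine gap is in your last step, the proof that $d_3=0$. You claim that ${\rm Pic}({\mathcal Q}_G)$ is generated by Chern classes of determinant (theta) line bundles, and that these descend, so that ${\rm Pic}({\mathcal M}_C(G))\to{\rm Pic}({\mathcal Q}_G)$ is surjective. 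The first claim is false outside types $A$ and $C$: for almost simple $G$, the determinant-of-cohomology bundle attached to a representation $\rho$ pulls back on ${\mathcal Q}_G$ to the $d_\rho$-th power of the ample generator, where $d_\rho$ is the Dynkin index of $\rho$, and $d_\rho\ge 2$ for every nontrivial representation of ${\rm Spin}_n$ ($n\ge 7$), $G_2$, $F_4$, $E_6$, $E_7$, $E_8$ (the minimum is $60$ for $E_8$). The paper itself uses this in Section \ref{sec7}: for ${\rm Spin}_n$ the generator of the Picard group is the Pfaffian bundle, whose \emph{square} is a determinant bundle. So determinant bundles only produce a finite-index subgroup of ${\rm Pic}({\mathcal Q}_G)$; genuine surjectivity of ${\rm Pic}({\mathcal M}_C(G))\to{\rm Pic}({\mathcal Q}_G)$ --- equivalently, as you say, the splitting over $L_C(G)$ of the level-one central extension --- is a hard theorem (Laszlo--Sorger and its extensions), and in the paper it appears as an \emph{output} of the spectral-sequence argument, not as an input one may quote to run it.

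The good news is that your argument needs only the finite-index statement, and then closes by exactly the mechanism you have already set up. Since the image of ${\rm Pic}({\mathcal M}_C(G))\to{\rm Pic}({\mathcal Q}_G)$ has finite index (by the determinant bundles of a faithful representation, or by \cite[Proposition 1.5]{BLS}, which is what the paper cites), the transgression $d_3$ vanishes on a finite-index subgroup of $E_3^{0,2}\cong{\mathbb Z}^s$; hence every value of $d_3$ is annihilated by that index, i.e.\ the image of $d_3$ is a torsion subgroup of $H^3(BL_C(G),{\mathbb Z})$. As you proved that this group is torsion free, it follows that $d_3=0$, and your proof is complete. This ``torsion cokernel mapping into a torsion-free (here vanishing) group'' step is precisely how the paper argues: the cokernel of $\theta\colon{\rm Pic}({\mathcal M}_C(G))\to{\rm Pic}({\mathcal Q}_G)$ is torsion by \cite{BLS}, and it injects into $H^2(BL_C(G),{\mathbb C}^*)=0$ by Proposition \ref{LCG}(3).
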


\begin{proof}
The descent spectral sequence for the principal $L_C(G)$--bundle 
${\mathcal Q}_G \,\longrightarrow \,{\mathcal M}_C(G)$ gives the 
following exact sequence in the analytic topology:
\begin{equation}\label{Sp-S}
0 \,\longrightarrow\,
H^1(BL_C(G), \,{\mathbb C}^*) \,\longrightarrow\,H^1({\mathcal
M}_C(G), \, {\mathcal O}^*)\,\stackrel{\theta}{\longrightarrow}
\, H^0(BL_C(G),\, H^1({\mathcal Q}_G,{\mathcal O}^*))
\end{equation}
$$
\longrightarrow\, H^2(BL_C(G), \,{\mathbb C}^*) \,\longrightarrow\,
{\rm kernel}[H^2({\mathcal M}_C(G),\, {\mathcal O}^*)\,\longrightarrow
\,H^0(BL_C(G),\, H^2({\mathcal Q}_G,{\mathcal O}^*))]
$$
$$
\longrightarrow \, H^1(BL_C(G),\, H^1({\mathcal Q}_G,{\mathcal 
O}^*))\, ;
$$
cf. \cite[p. 371, Corollary 3.2]{Me} (see also \cite[p. 10, 
(1.9)]{Te}, \cite[p. 27, (5.5)]{Te}).

Let $s$ be the number of almost simple factors in the product 
decomposition of $G$.
We have $H^2_{\text {\'et}}({\mathcal Q}_G,\, {\mathcal
O}^*_{{\mathcal Q}_G})_{{\rm torsion}}\,=\,0$ by
Proposition \ref{ind-gr}. Also, $H^1({\mathcal Q}_G,\, {\mathcal 
O}^*)\,\cong\, {\mathbb Z}^s$ \cite[p. 186, Lemma 1.4]{BLS}.
Therefore, from Proposition \ref{LCG}(1),
\begin{equation}\label{e2}
H^0(BL_C(G),\, H^2({\mathcal Q}_G,{\mathcal O}^*_{{\mathcal Q}_G}))
\,=\, 0\,=\,
H^1(BL_C(G), \, H^1({\mathcal Q}_G,{\mathcal O}^*_{{\mathcal Q}_G}))\, .
\end{equation}

The homomorphism of the Picard groups 
\begin{equation}\label{Pic}
\theta \, :\, {\rm Pic}({\mathcal M}_C(G))\,\longrightarrow \,
{\rm Pic}({\mathcal Q}_G)\,\cong\,{\mathbb Z}^s
\end{equation}
in \eqref{Sp-S}
has a finite cokernel \cite[p. 187, Proposition 1.5]{BLS}, 
in particular, this cokernel is a
torsion group. Since $H^2(BL_C(G),\, {\mathbb C}^*)\,=\, 0$ 
(see Proposition \ref{LCG}(3)), from \eqref{Sp-S}
it follows that $\theta$ in \eqref{Pic} is actually surjective.
Therefore, using \eqref{e2}, from \eqref{Sp-S} we conclude that
$$
H^2({\mathcal M}_C(G),\, {\mathcal O}^*)\, =\,
H^2(BL_C(G),\, {\mathbb C}^*)\,=\,0 .
$$
Now we apply Proposition \ref{key} to the stack
${\mathcal M}_C(G)$; from \cite[p. 26, Proposition 5.1]{Te}
and \cite[p. 26, Remark 5.2]{Te} it follows that the
two conditions in Proposition \ref{key} are satisfied.
Therefore, we conclude that ${\rm Br}({\mathcal M}_C(G))
\,=\,0$.
\end{proof}

\begin{remark}
{\rm It should be pointed out that the above arguments involving
sheaves in
analytic topology can be replaced by an argument which uses only
constant sheaves in Euclidean topology.
Using the exponential sequence, the Brauer group ${\rm 
Br}(X)$ for any $X$ can be expressed as the quotient of
$H^2(X,\,{\mathbb C}^*)$ by the image of $H^2(X,\,{\mathbb C})$.
The isomorphism
$$
H^2({\mathcal M}_C(G),\,{\mathbb C})\cong H^2({\mathcal Q}(G),\,{\mathbb 
C})
$$
simplifies the terms in the descent spectral sequence associated to the
constant sheaf ${\mathbb C}^*$.}
\end{remark}

Let
$$
Z_G\, \subset\, G
$$
be the center. Let $M^{\rm ss}_C(G)$
be the coarse moduli space of semistable 
principal $G$--bundles over $C$. The smooth locus of
$M^{\rm ss}_C(G)$ is the locus of regularly stable
principal $G$-bundles over $C$ \cite[Corollary 3.6]{BH2}.
We recall that a principal
$G$--bundle $E$ is called \textit{regularly stable} if
$E$ is stable, and the natural homomorphism
$Z_G\, \longrightarrow\, {\rm Aut(E)}$ is surjective (the
restriction to $Z_G$ of the action of $G$ on $E$ produces this
homomorphism). Let
$$
{\mathcal M}_C(G)^{\rm rs}\, \subset\, {\mathcal M}_C(G)
$$
be the open sub--stack defined by the regularly stable bundles
\cite[Lemma 2.3]{BH}. Let
\begin{equation}\label{eq-c-m}
p\, :\, {\mathcal M}_C(G)^{\rm rs}\, \longrightarrow\,
M_C(G)^{\rm rs}
\end{equation}
be the morphism to the coarse moduli space, so
$M_C(G)^{\rm rs}$ is the moduli space of regularly stable
principal $G$--bundles over $C$. As noted above, $M_C(G)^{\rm rs}$
is the smooth locus of $M^{\rm ss}_C(G)$.
 
\begin{proposition} \label{bis-hoff}
As before, $G$ is semisimple.
\begin{enumerate}
\item ${\mathcal M}_C(G)^{\rm rs}$ is an open sub--stack of the
moduli stack ${\mathcal M}_C(G)$, and the complement of it
in ${\mathcal M}_C(G)$ is of codimension at least two.

\item The morphism $p$ in \eqref{eq-c-m}
defines a gerbe over $M_C(G)^{\rm rs}$ banded by $Z_G$.
\end{enumerate}
\end{proposition}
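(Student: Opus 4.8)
The plan is to handle the two assertions by quite different methods. Part (1) is a codimension estimate that I would establish through infinitesimal deformation theory together with a stratification of the complement, whereas part (2) is the verification of the defining axioms of a gerbe, the band being read off from the central subgroup $Z_G$. Throughout I would use that ${\mathcal M}_C(G)$ is smooth of pure dimension $(g-1)\dim G$ at every point: the deformation complex at a bundle $E$ is governed by $\mathrm{ad}(E)$, and since $\deg\mathrm{ad}(E)=0$ one has $h^1(C,\mathrm{ad}(E))-h^0(C,\mathrm{ad}(E))=-\chi(C,\mathrm{ad}(E))=(g-1)\dim G$. A point lies in ${\mathcal M}_C(G)^{\rm rs}$ precisely when $E$ is stable with $\mathrm{Aut}(E)=Z_G$, equivalently when $E$ is stable and $H^0(C,\mathrm{ad}(E))=0$.

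For part (1), I would write the complement of ${\mathcal M}_C(G)^{\rm rs}$ as the union of the unstable locus and the locus of semistable bundles that fail to be regularly stable, and bound the codimension of each. The unstable locus I would stratify by Harder--Narasimhan type: a bundle in a given stratum admits a canonical reduction to a parabolic $P\subsetneq G$, and the codimension of the stratum equals a sum of groups $\mathrm{Ext}^1(D_i,D_j)$ between graded pieces of strictly decreasing slope $\mu(D_i)>\mu(D_j)$ (for which $\mathrm{Hom}(D_i,D_j)=0$). Each such term has dimension $n_i n_j(g-1)+n_i n_j(\mu_i-\mu_j)$, a positive multiple of $(g-1)$ plus a positive slope contribution, so for $\mathrm{genus}(C)\geq 3$ every nonempty unstable stratum has codimension at least $g-1\geq 2$. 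For the semistable but non--regularly--stable locus I would use that such bundles carry a reduction of structure group to a proper subgroup: the strictly semistable ones admit an admissible (Jordan--H\"older) reduction to a proper parabolic, while a stable bundle with $\mathrm{Aut}(E)\neq Z_G$ reduces to the centralizer of its extra automorphisms, a proper reductive subgroup. Parametrising these reductions and counting dimensions, again using $\mathrm{genus}(C)\geq 3$, yields codimension at least two. These estimates are exactly the bookkeeping underlying \cite{BH}, \cite{BH2}, which I would cite for the precise analysis.

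For part (2), recall that $p$ is the restriction to the regularly stable locus of the morphism to the coarse moduli space. To see that $p$ is a gerbe I would verify the three standard conditions. First, local surjectivity: since $M_C(G)^{\rm rs}$ is a good quotient of the stable locus of a parameter scheme carrying a (local) universal family, $p$ admits sections fppf--locally on the base. Second, any two objects over the same point are locally isomorphic, because two regularly stable bundles with the same image in $M_C(G)^{\rm rs}$ are isomorphic as $G$--bundles, the coarse space classifying isomorphism classes of stable bundles. Third, and this is what pins down the band, for every $G$--bundle $E$ the centre acts by automorphisms through $Z_G\hookrightarrow G$, giving a canonical homomorphism $Z_G\to\mathrm{Aut}(E)$ that is an isomorphism exactly on the regularly stable locus. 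Because $Z_G$ is central, this identification is functorial in $E$ and invariant under all bundle isomorphisms, so the sheaf of automorphisms of $p$ is canonically the constant abelian sheaf $\underline{Z_G}$; locally $p$ is then equivalent to $BZ_G\times M_C(G)^{\rm rs}$. This is precisely the assertion that $p$ is a gerbe banded by $Z_G$.

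The main obstacle is the codimension estimate in part (1), and within it the non--regularly--stable stable stratum. The unstable strata are controlled cleanly by the Harder--Narasimhan formula, but bounding the locus of stable bundles with automorphism group strictly larger than $Z_G$ requires a careful analysis of reductions to proper reductive subgroups, and it is here that the hypothesis $\mathrm{genus}(C)\geq 3$ is genuinely used. By contrast, part (2) is essentially formal once the automorphism groups have been identified, the only real input being the centrality of $Z_G$, which guarantees both the local triviality of the gerbe and the canonical trivialisation of its band.
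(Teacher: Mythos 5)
Your proposal is correct in outline, but note that the paper itself offers no argument for this proposition: it simply cites \cite[Theorem II.6]{F} or \cite[Theorem 2.4]{BH} for part (1), and Section 6 of \cite{BH} for part (2), treating both statements as known results. What you have written is essentially a reconstruction of the proofs behind those citations, and for the hard bookkeeping you defer to the same sources \cite{BH}, \cite{BH2}; so the two approaches coincide in substance, the value of yours being that it exhibits the mechanism (deformation theory, stratification of the complement, rigidification by the central automorphisms). Two points in your sketch need repair before it would stand as a proof. First, your codimension formula for the Harder--Narasimhan strata --- graded pieces $D_i$ of ranks $n_i$ and slopes $\mu_i$, with $\mathrm{Ext}^1(D_i,D_j)$ --- is the $\mathrm{GL}_n$ formula and literally makes no sense for a general semisimple $G$ (say $E_8$); the correct general statement is that the stratum with canonical reduction $E_P$ has normal space $H^1(C,\mathrm{ad}(E)/\mathrm{ad}(E_P))$, whose dimension equals $(g-1)\dim (G/P)+\deg \mathrm{ad}(E_P)$ because $H^0(C,\mathrm{ad}(E)/\mathrm{ad}(E_P))=0$ and $\deg \mathrm{ad}(E_P)>0$ for the canonical reduction; this yields the same bound $\geq g-1\geq 2$. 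Second, in part (2) your justification of local sections via ``a (local) universal family'' must be phrased carefully, since universal families do \emph{not} in general exist Zariski--locally on $M_C(G)^{\rm rs}$ --- their obstruction is exactly the gerbe class $\psi$ (compare \cite{BBGN}); the correct statement is that the parameter scheme $R^{\rm rs}$ of the GIT construction carries a tautological family and $R^{\rm rs}\longrightarrow M_C(G)^{\rm rs}$ is a smooth surjection, hence admits sections \'etale--locally, and composing gives \'etale--local (a fortiori fppf--local) sections of $p$, which is what the gerbe axioms require. With these corrections, and with the functorial identification of every automorphism group with the central $Z_G$ trivializing the band (as you say), your argument is precisely the one carried out in the references the paper cites.
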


\begin{proof}
See \cite[Theorem II.6]{F} or Theorem 2.4 of \cite{BH} 
for a proof of the first part. The second part is proved in
Section 6 of \cite{BH}.
\end{proof}

Let
\begin{equation}\label{psi}
\psi\,\in\, H^2(M_C(G)^{\rm rs}, \,Z_G)
\end{equation}
be the cohomology
class defined by the gerbe in \eqref{eq-c-m}.

Let $Z$ be a finite abelian group, and let
\begin{equation}\label{gp}
\alpha\, :\, {\mathcal M}\,\longrightarrow\, M
\end{equation}
be a gerbe
banded by $Z$, where $M$ is irreducible. Take any
line bundle $L$ over ${\mathcal M}$.
So $L$ is given by a functor $L_S$ from ${\mathcal M}(S)$ to 
the groupoid of line bundles on $S$ for every
$\mathbb C$--scheme $S$. In particular, $L_S$ 
defines for every object $\mathcal E$ in ${\mathcal M}(S)$ a group 
homomorphism
$$
L_{S, {\mathcal E}}\,:\, {\rm Aut}_{{\mathcal M}( S)}( \mathcal E) 
\,\longrightarrow\, 
{\rm Aut}_{{\mathcal O}_S}(L_S( 
{\mathcal E})) \,= \,\Gamma( S, {\mathcal O}^*_S)\, .
$$
The compatibility conditions ensure that the composition
$$
Z( S)\,\stackrel{\iota_{\mathcal E}}{\longrightarrow}\,
\text{Aut}_{{\mathcal M}(S)}(\mathcal E) 
\,\stackrel{L_{S, \mathcal E}}{\longrightarrow}\,
\Gamma( S, {\mathcal O}_S^*)
$$
defines a $1$--morphism $Z \times{\mathcal M} \,\longrightarrow\, {
\mathbb G}_m \times {\mathcal M}$ over ${\mathcal M}$.
As $M$ is connected, and $\text{Hom}(Z, {\mathbb G}_m)$ is discrete,
this $1$--morphism 
is the pullback of some character $\chi\,:\, Z \,\longrightarrow\,
{\mathbb G}_m$. We call $\chi$ the \textit{weight} of $L$. (See
 \cite[Section 6]{BH}.)

\begin{lemma}\label{pic-bra}
Let $\beta \,\in\, H^2(M,\,Z)$ be the class
of the gerbe in \eqref{gp} banded by $Z$.
Then there is an exact sequence 
$$
0 \,\longrightarrow \,{\rm Pic}(M) 
\,\stackrel{\alpha^*}{\longrightarrow} 
\,{\rm Pic}({\mathcal M}) \,\stackrel{{\rm wt}}{\longrightarrow}\, 
{\rm Hom}(Z,\, {\mathbb G}_m)\,\stackrel{{\beta_*}}{\longrightarrow}\, 
{\rm Br}(M)\,\stackrel{\alpha^*}{\longrightarrow}\,{\rm Br}({\mathcal 
M})\, ,
$$
where ${\beta_*}$ takes any homomorphism $\eta\,:\,Z\,
\longrightarrow\, {\mathbb G}_m$ to the image of $\beta$ under the 
homomorphism $ H^2(M,\,Z)\longrightarrow H^2(M,\, {\mathbb G}_m)$ 
induced by $\eta$.
\end{lemma}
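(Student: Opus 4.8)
The plan is to obtain the asserted sequence as the low--degree exact sequence of the Leray (descent) spectral sequence for the gerbe morphism $\alpha\,:\,{\mathcal M}\,\longrightarrow\, M$ with coefficients in ${\mathbb G}_m$. Write
$$
E_2^{p,q}\,=\, H^p(M,\, R^q\alpha_*{\mathbb G}_m)\,\Longrightarrow\,
H^{p+q}({\mathcal M},\,{\mathbb G}_m)\, .
$$
Its five--term exact sequence reads
$$
0\,\to\, E_2^{1,0}\,\to\, H^1({\mathcal M},{\mathbb G}_m)\,\to\,
E_2^{0,1}\,\stackrel{d_2}{\longrightarrow}\, E_2^{2,0}\,\to\,
H^2({\mathcal M},{\mathbb G}_m)\, ,
$$
and the whole point is to identify each term and each arrow with the corresponding object in the statement.

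First I would compute the sheaves $R^q\alpha_*{\mathbb G}_m$ for $q=0,1$. Since $\alpha$ is a gerbe banded by the constant finite group $Z$, it is locally (in the \'etale, or fppf, topology on $M$) of the form $BZ\times(-)$; using $H^0(BZ,{\mathbb G}_m)={\mathbb G}_m$ and ${\rm Pic}(BZ)=H^1(BZ,{\mathbb G}_m)={\rm Hom}(Z,{\mathbb G}_m)$ one gets
$$
R^0\alpha_*{\mathbb G}_m\,=\,{\mathbb G}_m\, , \qquad
R^1\alpha_*{\mathbb G}_m\,=\,\underline{{\rm Hom}}(Z,\,{\mathbb G}_m)\, ,
$$
the latter being the constant sheaf because the band $Z$ is constant. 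As $M$ is irreducible, hence connected, this yields $E_2^{1,0}={\rm Pic}(M)$, $E_2^{0,1}={\rm Hom}(Z,{\mathbb G}_m)$ and $E_2^{2,0}=H^2(M,{\mathbb G}_m)$; likewise $H^1({\mathcal M},{\mathbb G}_m)={\rm Pic}({\mathcal M})$, while ${\rm Br}(M)$ is the torsion subgroup of the edge term $H^2(M,{\mathbb G}_m)$. The two outer edge homomorphisms are then $\alpha^*$ on ${\rm Pic}$ and on $H^2(-,{\mathbb G}_m)$, and the injectivity of $\alpha^*\,:\,{\rm Pic}(M)\to{\rm Pic}({\mathcal M})$ is automatic from the shape of the sequence.

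Next I would match the arrow $H^1({\mathcal M},{\mathbb G}_m)\to E_2^{0,1}$ with the weight map ${\rm wt}$. Under the stalkwise identification $R^1\alpha_*{\mathbb G}_m=\underline{{\rm Hom}}(Z,{\mathbb G}_m)$, a line bundle on ${\mathcal M}$ is sent to the character recording how the band $Z={\rm Aut}$ acts on its fibres, which is precisely the weight $\chi$ defined just before the lemma. Exactness at ${\rm Pic}({\mathcal M})$ then says that the weight--zero line bundles are exactly the pullbacks $\alpha^*{\rm Pic}(M)$, as required.

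The main obstacle is the identification of the transgression $d_2\,:\,{\rm Hom}(Z,{\mathbb G}_m)\to{\rm Br}(M)$ with $\beta_*$. I would argue this by unwinding the spectral sequence in \v{C}ech terms: choose a cover of $M$ trivializing the gerbe, so that $\beta$ is represented by a $2$--cocycle $(\beta_{ijk})$ valued in $Z$; a character $\eta\in{\rm Hom}(Z,{\mathbb G}_m)$, viewed as a global section of $R^1\alpha_*{\mathbb G}_m$, is locally realized by weight--$\eta$ line bundles on the trivialized pieces, and the obstruction to gluing them globally is measured on triple overlaps by $\eta(\beta_{ijk})$. Thus $d_2(\eta)$ is the image of $\beta$ under the coefficient map $\eta_*\,:\,H^2(M,Z)\to H^2(M,{\mathbb G}_m)$, which is exactly $\beta_*(\eta)$. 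Since $Z$ is finite, ${\rm Hom}(Z,{\mathbb G}_m)$ is finite, so $d_2$ has torsion image contained in ${\rm Br}(M)$, and the torsion subgroup of $H^2({\mathcal M},{\mathbb G}_m)$ is ${\rm Br}({\mathcal M})$; hence restricting the last two terms of the five--term sequence to torsion yields exactly ${\rm Br}(M)\stackrel{\alpha^*}{\longrightarrow}{\rm Br}({\mathcal M})$ with $\ker\alpha^*={\rm im}\,\beta_*$, giving simultaneously the exactness at $E_2^{0,1}$ and at ${\rm Br}(M)$. The computation of $d_2$ is the delicate step; it is essentially the content of the weight/gerbe formalism of \cite[Section 6]{BH}, on which I would rely for the cocycle bookkeeping.
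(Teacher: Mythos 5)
Your proposal is correct and follows essentially the same route as the paper: the paper's proof is exactly the Leray spectral sequence for $\alpha$ with coefficients ${\mathcal O}^*_{\mathcal M}$, noting $\alpha_*{\mathcal O}^*_{\mathcal M}\,=\,{\mathcal O}^*_M$ and that $R^1\alpha_*{\mathcal O}^*_{\mathcal M}$ is the constant sheaf ${\rm Hom}(Z,\,{\mathbb G}_m)$. The only difference is one of detail, not of method: you additionally spell out the identification of the edge map with ${\rm wt}$, of the transgression $d_2$ with $\beta_*$ via a \v{C}ech cocycle computation, and the passage from $H^2(-,{\mathbb G}_m)$ to its torsion subgroups, all of which the paper leaves implicit.
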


\begin{proof}
This is the Leray spectral sequence for ${\mathcal O}^*_{\mathcal M}
\,\longrightarrow\, {\mathcal M}\,\stackrel{\alpha}{\longrightarrow}\, 
M$.
The only points to note are that
$$
\alpha_*{\mathcal O}^*_{\mathcal M}\,=\,{\mathcal O}^*_M
$$
and $R^1\alpha_*{\mathcal O}^*_{\mathcal M}$
is the constant sheaf ${\rm Hom}(Z,\, {\mathbb G}_m)$.
\end{proof}

\begin{theorem} \label{gerbe-sc}
There is an exact sequence
$$
0 \,\longrightarrow \,{\mathbb Z}^s\,\stackrel{p^*}{\longrightarrow} 
\,{\mathbb Z}^s \,\stackrel{{\rm wt}}{\longrightarrow}\, 
{\rm Hom}(Z_G,\,{\mathbb G}_m) \,\stackrel{\psi_*}{\longrightarrow}\,
{\rm Br}(M_C(G)^{\rm rs}) \,\longrightarrow\, 0\, ,
$$
where $s$ is the number of almost simple factors in $G$, $p^*$ is
the homomorphism of Picard groups induced by
the morphism $p$ in \eqref{eq-c-m}, ${\rm wt}$ is
the weight map defined above, and $\psi_*$ is constructed
using the class $\psi$ in \eqref{psi} in the obvious way.
\end{theorem}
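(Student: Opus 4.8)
The plan is to apply Lemma \ref{pic-bra} to the morphism $p\,:\,{\mathcal M}_C(G)^{\rm rs}\,\longrightarrow\, M_C(G)^{\rm rs}$ of \eqref{eq-c-m}, which by Proposition \ref{bis-hoff}(2) is a gerbe banded by $Z_G$ with class $\psi$ of \eqref{psi}; note that since $G$ is simply connected, ${\mathcal M}_C(G)$ and hence $M_C(G)^{\rm rs}$ are irreducible, so the weight map is defined. Taking $M\,=\,M_C(G)^{\rm rs}$, ${\mathcal M}\,=\,{\mathcal M}_C(G)^{\rm rs}$, $Z\,=\,Z_G$ and $\beta\,=\,\psi$ in Lemma \ref{pic-bra} gives the exact sequence
$$
0 \longrightarrow {\rm Pic}(M_C(G)^{\rm rs}) \stackrel{p^*}{\longrightarrow} {\rm Pic}({\mathcal M}_C(G)^{\rm rs}) \stackrel{{\rm wt}}{\longrightarrow} {\rm Hom}(Z_G,\,{\mathbb G}_m)
$$
$$
\stackrel{\psi_*}{\longrightarrow} {\rm Br}(M_C(G)^{\rm rs}) \stackrel{p^*}{\longrightarrow} {\rm Br}({\mathcal M}_C(G)^{\rm rs}).
$$
The whole task then reduces to identifying the two Picard groups with ${\mathbb Z}^s$ and to proving that the final arrow vanishes, so that the sequence terminates in $0$ after ${\rm Br}(M_C(G)^{\rm rs})$.

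For the middle Picard group I would recall that the proof of Theorem \ref{thm0} already shows the homomorphism $\theta$ in \eqref{Pic} to be an isomorphism: it is surjective there, and it is injective because its kernel is $H^1(BL_C(G),\,{\mathbb C}^*)$, which vanishes by the connectedness of $L_C(G)$. Hence ${\rm Pic}({\mathcal M}_C(G))\,\cong\,{\mathbb Z}^s$. Since by Proposition \ref{bis-hoff}(1) the complement of ${\mathcal M}_C(G)^{\rm rs}$ in the smooth stack ${\mathcal M}_C(G)$ has codimension at least two, restriction induces isomorphisms on Picard groups, so ${\rm Pic}({\mathcal M}_C(G)^{\rm rs})\,\cong\,{\mathbb Z}^s$. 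The left Picard group then comes for free from the sequence above: as $Z_G$ is finite, ${\rm Hom}(Z_G,\,{\mathbb G}_m)$ is finite, so the image of ${\rm wt}$ is finite and its kernel ${\rm image}(p^*)$ is a subgroup of finite index in ${\mathbb Z}^s$, hence itself free of rank $s$. Because $p^*$ is injective, ${\rm Pic}(M_C(G)^{\rm rs})\,\cong\,{\mathbb Z}^s$, and this is the left-hand ${\mathbb Z}^s$ of the asserted sequence.

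It remains to prove that $p^*\,:\,{\rm Br}(M_C(G)^{\rm rs})\,\longrightarrow\,{\rm Br}({\mathcal M}_C(G)^{\rm rs})$ is the zero map, for which it suffices to show ${\rm Br}({\mathcal M}_C(G)^{\rm rs})\,=\,0$. I expect this to be the main obstacle, since it must be propagated from the vanishing ${\rm Br}({\mathcal M}_C(G))\,=\,0$ of Theorem \ref{thm0} across a codimension-two modification, and cohomological Brauer groups need not be preserved by such restrictions. By Theorem \ref{thm0} together with Proposition \ref{key} we have $H^3({\mathcal M}_C(G),\,{\mathbb Z})_{\rm torsion}\,=\,0$. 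Writing ${\mathcal Z}$ for the complement, of complex codimension at least two in the smooth stack ${\mathcal M}_C(G)$, semipurity gives $H^k_{\mathcal Z}({\mathcal M}_C(G),\,{\mathbb Z})\,=\,0$ for $k\,\leq\,3$ and shows $H^4_{\mathcal Z}({\mathcal M}_C(G),\,{\mathbb Z})$ to be torsion-free (it is freely generated by the codimension-two components of ${\mathcal Z}$); the excision sequence
$$
H^3_{\mathcal Z}({\mathcal M}_C(G),{\mathbb Z}) \longrightarrow H^3({\mathcal M}_C(G),{\mathbb Z}) \longrightarrow H^3({\mathcal M}_C(G)^{\rm rs},{\mathbb Z}) \longrightarrow H^4_{\mathcal Z}({\mathcal M}_C(G),{\mathbb Z})
$$
then forces $H^3({\mathcal M}_C(G)^{\rm rs},\,{\mathbb Z})_{\rm torsion}\,=\,0$. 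Finally I would check, exactly as for ${\mathcal M}_C(G)$ in the proof of Theorem \ref{thm0}, that the two hypotheses of Proposition \ref{key} pass to the open substack ${\mathcal M}_C(G)^{\rm rs}$, which is immediate since both ${\rm Pic}$ and $H^2(\cdot,\,{\mathbb Z})$ are unchanged by the codimension-two restriction. Proposition \ref{key} then yields ${\rm Br}({\mathcal M}_C(G)^{\rm rs})\,=\,H^3({\mathcal M}_C(G)^{\rm rs},\,{\mathbb Z})_{\rm torsion}\,=\,0$.

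Combining these three identifications with the exact sequence of Lemma \ref{pic-bra}, and using that the vanishing of the last Brauer group makes $\psi_*$ surjective, produces exactly the four-term exact sequence asserted in the theorem.
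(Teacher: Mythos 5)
Your proposal is correct and takes essentially the same route as the paper: the paper likewise applies Lemma \ref{pic-bra} to the $Z_G$--gerbe $p$ of \eqref{eq-c-m} and terminates the sequence by observing that ${\rm Br}({\mathcal M}_C(G)^{\rm rs})\,=\,0$, which it deduces from Theorem \ref{thm0} together with the codimension--two statement of Proposition \ref{bis-hoff}(1). The only difference is that you spell out details the paper leaves implicit, namely the identification of both Picard groups with ${\mathbb Z}^s$ (via the surjectivity and injectivity of $\theta$ in \eqref{Pic} and restriction across the codimension--two complement) and the semipurity/excision argument showing that the vanishing of the Brauer group survives removal of the codimension--two locus.
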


\begin{proof}
{}From Theorem \ref{thm0} and Proposition \ref{bis-hoff}(1)
it follows that ${\rm Br}({\mathcal M}_C(G)^{\rm rs})\,=\,0$.
Therefore, Lemma \ref{pic-bra} applied to the
gerbe in \eqref{eq-c-m} produces the exact sequence.
\end{proof}

\section{The twisted case}

In this section the semisimple group $G$ is assumed to be
simply connected but the moduli stack and moduli
space will be twisted (defined below).

Fix a maximal torus $T$ of $G$. As before, the center
of $G$ will be denoted by $Z_G$. Let $\sigma$ be the
rank of $G$, so $\sigma$ is the dimension of $T$.

As before, for any $m$,
let $\mu_m\,\subset\, {\mathbb C}^*$ be the group of all $m$--th
roots of $1$. We fix an isomorphism 
\begin{equation}\label{e-rho}
\rho\,:\, T \,\longrightarrow\, ({\mathbb G}_m)^\sigma~\,~\,
\text{~with~}~\, \rho(Z_G) \,=\, \prod_{i=1}^\sigma \mu_{r_i}\, .
\end{equation}
Using this isomorphism $\rho$, the homomorphism
$$
({\mathbb G}_m)^\sigma \,\longrightarrow\, ({\mathbb G}_m)^\sigma
$$
defined by $\prod_{i=1}^\sigma z_i\,\longmapsto\,
\prod_{i=1}^\sigma (z_i)^{r_i}$ produces an isomorphism
$$
T/Z_G\,=\, ({\mathbb G}_m)^\sigma/\rho(Z_G)
\,\stackrel{\sim}{\longrightarrow}\, T\, .
$$

Let $C(G)$ denote the quotient of
$G \times T$ by $Z_G$ for the diagonal action. The projections
$$
q \,: \,C(G) \,\longrightarrow\, T/Z_G\,=\,T ~\,~\text{~and~}
~\,~ p\,:\,C(G) \,\longrightarrow \,G/Z_G
$$
induce morphisms of stacks 
\begin{equation}\label{m-d}
{\rm det}\,:\,{\mathcal M}_C(C(G))\,\longrightarrow \,
{\mathcal M}_C(T)
\end{equation}
and
\begin{equation}\label{m-pi}
{\mathcal M}_C(C(G))\,\longrightarrow \,{\mathcal M}_C(G/Z_G)
\end{equation}
respectively.

For an element ${\bf d}\,=\,(d_1,\ldots,d_\sigma)\,\in\,
{\mathbb Z}^\sigma$, let 
${\mathcal O}_C({\bf d}p)$ denote the rational point of ${\mathcal
M}_C(T)$ defined by $({\mathcal O}_C(d_1p_0),\ldots,{\mathcal
O}_C(d_\sigma p_0))$.
Define
\begin{equation}\label{m-d2}
{\mathcal M}^{\bf d}_C(G)\, :=\, 
{\rm det}^{-1}({\mathcal O}_C({\bf d}p))\, \subset\,
{\mathcal M}_C(C(G))\, ,
\end{equation}
where ${\rm det}$ is the morphism in \eqref{m-d}. Let
\begin{equation}\label{m-pi2}
\Phi\,:\,
{\mathcal M}^{\bf d}_C(G)\,\longrightarrow \,{\mathcal M}_C(G/Z_G)
\end{equation}
be the restriction of the morphism in \eqref{m-pi}.

Take any $\delta\, \in\, Z_G$. Consider $$\rho(\delta)\,=\,
(a_1\, ,\cdots\, ,a_\sigma)\, \in\, ({\mathbb G}_m)^\sigma\, ,$$
where $\rho$ is the homomorphism in \eqref{e-rho}. Take
${\bf d}\,\in\, {\mathbb Z}^\sigma$ such that there is
an element $${\bf r}\,:=\, (r_1,\ldots,r_\sigma)\, \in\,
{\mathbb Z}^\sigma$$ satisfying the condition that
$\exp(-2\pi\sqrt{-1}d_i/r_i)\,=\, a_i$ for every $i\,\in\,
[1\, ,\sigma]$.

The connected components of ${\mathcal M}_C(G/Z_G)$ are
parametrized by $\pi_1(G/Z_G)\,=\,Z_G$ (see Lemma \ref{lem00});
recall that $G$ is simply connected. Let
$$
{\mathcal M}_C(G/Z_G)^{\delta}\,\subset\,
{\mathcal M}_C(G/Z_G)
$$
be the connected component corresponding to
the element $\delta$. Let
$$
{\mathcal M}^{\delta}_C(G)\, :=\,
\Phi^{-1}({\mathcal M}_C(G/Z_G)^{\delta})\,\subset\,
{\mathcal M}^{\bf d}_C(G)
$$
be the open and closed sub--stack of ${\mathcal M}^{\bf d}_C(G)$,
where $\Phi$ is the morphism in \eqref{m-pi2}.
This ${\mathcal M}^{\delta}_C(G)$ is called the
\textit{twisted} moduli stack (see \cite[Section 2]{BLS}).

The restriction
\begin{equation}\label{phr}
\Phi\vert_{{\mathcal M}^{\delta}_C(G)}
\,:\, {\mathcal M}^{\delta}_C(G)\,\longrightarrow\, {\mathcal
M}_C(G/Z_G)^{\delta}
\end{equation}
is surjective.

The group $G/Z_G$ will also be denoted by $G_{\rm ad}$. The
connected components of $L(G_{\rm ad})$ are parametrized by
$Z_G$ (see Lemma \ref{lem00}). Let
\begin{equation}\label{h1}
(LG_{\rm ad})^{\delta}\, \subset\, L(G_{\rm ad})
\end{equation}
be the connected component corresponding to $\delta$.

The following proposition is proved in \cite{BLS} (see
\cite[p. 189, (2.4)]{BLS}):

\begin{proposition}[\cite{BLS}]\label{unifG0}
For any $\delta\in Z_G$, and any  $\zeta \,\in\,
(LG_{\rm ad})^{\delta}({\mathbb C})$, where $(LG_{\rm ad})^{\delta}$
is constructed in \eqref{h1}, there is a natural isomorphism
$$
{\mathcal M}^{\delta}_C(G)\,\cong\, (\zeta^{-1}L_C(G) \zeta) 
\backslash {\mathcal Q}_{{\widetilde G}}\, .
$$
\end{proposition}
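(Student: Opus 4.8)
The plan is to adapt the proof of the untwisted uniformization theorem $L_C(G)\backslash{\mathcal Q}_G\,\xrightarrow{\sim}\,{\mathcal M}_C(G)$ to the twisted situation, the essential new ingredient being the base point $\zeta$ that records the component $\delta$. The geometric input I would invoke is that, since $G$ (and hence $C(G)$) is semisimple, any principal bundle of the relevant type restricts to a \emph{trivial} bundle on the affine curve $C\setminus\{p_0\}$ — the Drinfeld--Simpson trivialization theorem, used already in \cite{F} and \cite{BLS}. Fixing a trivialization on $C\setminus\{p_0\}$ together with one on the formal disc around $p_0$, a bundle is reconstructed from its transition datum on the punctured formal disc, i.e.\ from a point of the loop group; the two trivialization ambiguities are exactly $L_C(G)$ (over $C\setminus\{p_0\}$) and $L^+(G)$ (over the disc). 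This produces the groupoid presentation of the moduli stack as a double-coset quotient.

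The twist then enters through the determinant constraint \eqref{m-d2} and the topological type \eqref{m-pi2}. A point of ${\mathcal M}^{\bf d}_C(G)$ is a $C(G)$-bundle whose image under ${\rm det}$ is the fixed $T$-bundle ${\mathcal O}_C({\bf d}p)$; pushing forward along $p$ gives a $G_{\rm ad}$-bundle whose component is read off from ${\bf d}$ by the prescription $\exp(-2\pi\sqrt{-1}\,d_i/r_i)=a_i$ attached to $\rho(\delta)$ in \eqref{e-rho}, so that ${\mathcal M}^\delta_C(G)$ is precisely the locus of adjoint type $\delta$. I would argue that rigidifying the $T$-direction by fixing the determinant collapses the $C(G)$-gluing data to $G$-type gluing data, now constrained to lie in the single connected component $(LG_{\rm ad})^\delta$ of \eqref{h1}. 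Choosing any representative $\zeta\in(LG_{\rm ad})^\delta(\mathbb C)$ as a base point of this component, I would use the adjoint action of $\zeta$ to carry the gluing data back into the identity component, which is ${\mathcal Q}_{\widetilde G}={\mathcal Q}_G$, connected because $G$ is simply connected (Lemma \ref{lem00}). Under this conjugation the change-of-trivialization group $L_C(G)$ over $C\setminus\{p_0\}$ is transported to $\zeta^{-1}L_C(G)\zeta$, which yields the asserted presentation ${\mathcal M}^\delta_C(G)\,\cong\,(\zeta^{-1}L_C(G)\zeta)\backslash{\mathcal Q}_{\widetilde G}$.

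Finally I would verify that the isomorphism is \emph{natural}, i.e.\ functorial in the test scheme $S$, and independent of the auxiliary choices: two representatives of the component $\delta$ differ by an element of the identity component of $L(G_{\rm ad})$, and the corresponding conjugated subgroups and quotient stacks are canonically identified, so the statement is well posed. The main obstacle I anticipate is the bookkeeping of the twist: making rigorous that fixing ${\rm det}={\mathcal O}_C({\bf d}p)$ \emph{together with} the component $\delta$ cuts out exactly the conjugate $\zeta^{-1}L_C(G)\zeta$ — neither a larger nor a differently twisted subgroup — and checking that the adjoint conjugation by $\zeta$ is compatible with the mixing of the $G$ and $T$ factors modulo $Z_G$ inside $C(G)$. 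The stack-theoretic steps (trivializability on the affine curve, and the descent identifying the double-coset quotient with the moduli stack) are standard after \cite{F,BLS,Te}; the genuine work lies in tracking the label $\delta$ through the determinant constraint and confirming that it matches the component of $\zeta$.
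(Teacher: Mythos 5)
The paper itself offers no proof of this proposition: it is quoted verbatim from \cite{BLS} (p.~189, (2.4)). So your sketch has to stand on its own, and while its architecture (Beauville--Laszlo-type gluing at $p_0$, then translation by a base point of the relevant component) is indeed the architecture of the \cite{BLS} argument, your key geometric input is false as stated. The group $C(G)=(G\times T)/Z_G$ is \emph{not} semisimple: it is reductive with central torus the image of $\{1\}\times T$, and Drinfeld--Simpson/Harder triviality fails for $C(G)$-bundles on the affine curve. Concretely, pushing any non-torsion $L\in {\rm Pic}(C\setminus\{p_0\})\cong {\rm Jac}(C)$ (nonempty since ${\rm genus}(C)\geq 3$) into the central torus gives a $C(G)$-bundle on $C\setminus\{p_0\}$ with nontrivial determinant, hence nontrivial. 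What rescues the step is the twisting datum itself, which your sketch never uses at this point: an object of ${\mathcal M}^{\bf d}_C(G)$ is a \emph{pair} $(P,\varphi)$ with $\varphi\,:\,{\rm det}\,P\cong {\mathcal O}_C({\bf d}p)$; since ${\mathcal O}_C({\bf d}p)$ is canonically trivial on $C\setminus\{p_0\}$, $\varphi$ induces a reduction of $P\vert_{C\setminus\{p_0\}}$ to $\ker(q)\cong G$, where $q\,:\,C(G)\to T/Z_G$ is the projection, and only then do \cite{Ha} and Drinfeld--Simpson (in families) apply; the same mechanism trivializes $P$ on the formal disc.

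The second gap is the "collapse" step, and it is not mere bookkeeping: the phrase "$G$-type gluing data, now constrained to lie in the single connected component $(LG_{\rm ad})^{\delta}$" is incoherent, since $L(G)$ is connected ($G$ being simply connected) while $(LG_{\rm ad})^{\delta}$ sits inside a different group. What is true is this: once the trivializations are chosen compatibly with $\varphi$, the change-of-trivialization groups are indeed $L_C(G)$ and $L^+(G)$ (the kernels of $q$ on the respective points), but the transition datum lies in the coset $q^{-1}(t^{\bf d})\subset L(C(G))$ --- a torsor under $L(G)$, not $L(G)$ itself --- where $t^{\bf d}$ is the transition cocycle of ${\mathcal O}_C({\bf d}p)$ at $p_0$. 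Thus one first gets ${\mathcal M}^{\bf d}_C(G)\cong L_C(G)\backslash q^{-1}(t^{\bf d})/L^+(G)$, and the proposition is then \emph{equivalent} to producing, for the given $\zeta$, a lift $\gamma_0\in L(C(G))$ of $\zeta$ with $q(\gamma_0)=t^{\bf d}$: such a lift exists because $H^1({\mathbb C}((t)),T)=0$ allows lifting $\zeta$ at all, and the congruence $\exp(-2\pi\sqrt{-1}d_i/r_i)=a_i$ from \eqref{e-rho} is precisely the condition that the lift can be corrected by a central $T({\mathbb C}((t)))$-element so that its determinant is exactly $t^{\bf d}$. Left translation by $\gamma_0^{-1}$ then carries $q^{-1}(t^{\bf d})/L^+(G)$ isomorphically onto ${\mathcal Q}_{\widetilde G}$ and transports the left $L_C(G)$-action to the $\zeta^{-1}L_C(G)\zeta$-action (conjugation by $\gamma_0$ equals conjugation by $\zeta$, the difference being central). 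This lifting step, which you defer as "the genuine work", \emph{is} the proposition: without it, working only with images in $L(G_{\rm ad})$ as your "adjoint action of $\zeta$" suggests would uniformize $M_C(G_{\rm ad})^{\delta}$ by the larger group $L_C(G_{\rm ad})$, which differs from the asserted presentation exactly by the $\Gamma$-covering and $Z_G$-gerbe that the twisted stack is designed to capture.
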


Proposition \ref{unifG0} implies that the of proof of Theorem
\ref{thm0} goes through in the twisted case being considered.
So we have the following theorem:
 
\begin{theorem} \label{thm0-d}
For any $\delta \,\in\, Z_G$, the group 
${\rm Br}({\mathcal M}^{\delta}_C(G))$ is trivial.
\end{theorem}

There is a coarse moduli space $M^{\delta}_C(G)^{\rm ss}$ for 
the open sub--stack of ${\mathcal M}^{\delta}_C(G)$
defined by the locus of semistable principal $C(G)$--bundles.
There is also an open subscheme
$M^{\delta}_C(G)^{\rm rs}\,\subset\, M^{\delta}_C(G)^{\rm ss}$ 
corresponding to the regularly stable principal
$C(G)$--bundles.

\begin{proposition} \label{bis-hoff-twisted}
The codimension of the complement of ${\mathcal M}^{\delta}_C(G)^{\rm 
rs}$ in ${\mathcal M}^{\delta}_C(G)$ is at least two.

The smooth locus of $M^{\delta}_C(G)^{\rm ss}$ is 
$M^{\delta}_C(G)^{\rm rs}$.
\end{proposition}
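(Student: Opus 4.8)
The plan is to prove both assertions by reducing the local and generic geometry of the twisted stack to the cohomology of the adjoint bundle, exactly as in the untwisted situation of Proposition \ref{bis-hoff}. The essential tool is the uniformization of Proposition \ref{unifG0}, which presents ${\mathcal M}^{\delta}_C(G)$ as a quotient $(\zeta^{-1}L_C(G)\zeta)\backslash {\mathcal Q}_{\widetilde G}$. Since conjugation by $\zeta$ is an isomorphism of ind--group schemes, this quotient is locally indistinguishable from the untwisted quotient $L_C(G)\backslash {\mathcal Q}_{\widetilde G}\,=\,{\mathcal M}_C(G)$, so the deformation theory at a point of ${\mathcal M}^{\delta}_C(G)$ is governed by the same cohomology groups $H^0(C,\,{\rm ad}(E))$ and $H^1(C,\,{\rm ad}(E))$ that control infinitesimal automorphisms and deformations in the untwisted case. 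Here, because the determinant (the $T$--component) of a $C(G)$--bundle $E$ is held fixed in the definition \eqref{m-d2}, the relevant infinitesimal data is the $\mathfrak g$--part, namely the adjoint bundle ${\rm ad}(E)$ of the associated $G/Z_G$--bundle $\Phi(E)$ with $\Phi$ as in \eqref{m-pi2}. Note also that $C$ being a curve forces $H^i(C,\,{\rm ad}(E))\,=\,0$ for $i\,\geq\, 2$, so all deformations are unobstructed and the stack ${\mathcal M}^{\delta}_C(G)$ is smooth.

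For the codimension estimate I would observe that the complement of ${\mathcal M}^{\delta}_C(G)^{\rm rs}$ in ${\mathcal M}^{\delta}_C(G)$ is the union of the locus of non--stable bundles and the locus of stable bundles whose automorphism group strictly contains the center. The first locus is stratified by Harder--Narasimhan/instability type, and the codimension of each stratum is an expression in the root--theoretic data of $G$ together with ${\rm genus}(C)$; this quantity is untouched by the twist $\delta$, which only selects a connected component. Thus the estimate is literally that of \cite[Theorem II.6]{F} and \cite[Theorem 2.4]{BH}, and since ${\rm genus}(C)\,\geq\, 3$ every such stratum has codimension at least two. A stable bundle with extra automorphisms admits a reduction to a proper reductive subgroup, and the locus of such reductions is again cut out by the same root data with codimension at least two by an identical dimension count.

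For the smooth locus, I would argue along the lines of \cite[Corollary 3.6]{BH2}. At a regularly stable point the reduced automorphism group is trivial, so the morphism $p$ exhibits ${\mathcal M}^{\delta}_C(G)^{\rm rs}$ over $M^{\delta}_C(G)^{\rm rs}$ with finite central stabilizers; combined with the unobstructedness just noted (from $H^2(C,\,{\rm ad}(E))\,=\,0$ and the expected dimension of $H^1(C,\,{\rm ad}(E))$), this forces the coarse space $M^{\delta}_C(G)^{\rm rs}$ to be smooth. Conversely, at a strictly semistable point, or at a stable point carrying automorphisms beyond the center, the local model of the coarse space is a nontrivial quotient of the smooth deformation space by a positive--dimensional or non--central group, hence singular. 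Since this analysis depends only on ${\rm ad}(E)$, it transfers from the untwisted setting verbatim, giving that the smooth locus of $M^{\delta}_C(G)^{\rm ss}$ is exactly $M^{\delta}_C(G)^{\rm rs}$.

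The step I expect to be the main obstacle is the bookkeeping of centers and automorphisms. One must verify that, after imposing the fixed--determinant condition coming from \eqref{m-d}, the center of $C(G)\,=\,(G\times T)/Z_G$ acts on a fibre of $\Phi$ precisely through $Z_G$, so that regular stability of the $C(G)$--bundle $E$ coincides with regular stability of its image $\Phi(E)$ on the $G/Z_G$--side. Making this identification precise — tracing the kernel of $p\,:\,C(G)\,\longrightarrow\, G/Z_G$ through the rigidification of the $T$--factor and using the surjectivity of the restriction \eqref{phr} — is the only genuinely new point; once it is in place, all the dimension and deformation computations reduce to the untwisted ones and both assertions follow.
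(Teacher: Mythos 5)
Your proposal is correct in outline and ultimately rests on the same two pillars as the paper's proof --- the untwisted codimension bound of \cite[Theorem II.6]{F} and \cite[Theorem 2.4]{BH}, and \cite[Corollary 3.6]{BH2} for the smooth locus --- but it organizes the reduction differently, and the difference is instructive. The paper never re-runs a Harder--Narasimhan or automorphism stratification on the twisted stack: it applies the codimension bound to the regularly stable locus of the connected component ${\mathcal M}_C(G/Z_G)^{\delta}$ of the \emph{adjoint, untwisted} moduli stack, where the cited theorems apply as stated since $G/Z_G$ is semisimple, and then simply takes the inverse image under $\Phi\vert_{{\mathcal M}^{\delta}_C(G)}$ from \eqref{phr}, using that the regularly stable locus of ${\mathcal M}^{\delta}_C(G)$ is exactly the preimage of the regularly stable locus of ${\mathcal M}_C(G/Z_G)^{\delta}$ and that taking preimages preserves the codimension estimate. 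So what you flag in your final paragraph as ``the main obstacle'' --- that regular stability of a fixed-determinant $C(G)$--bundle is detected on its $G/Z_G$--image --- is not a preliminary to be gotten past; it \emph{is} the paper's proof of the first assertion, and once it is stated, no stratification estimate on the twisted side is needed at all. Your alternative of transporting the root-theoretic estimates directly to the twisted stack could be made rigorous, but as written it leans on the inaccurate claim that the twist ``only selects a connected component'': ${\mathcal M}^{\delta}_C(G)$ is not a component of ${\mathcal M}_C(G)$ but a component of a determinant fibre inside ${\mathcal M}_C(C(G))$, with $C(G)$ reductive rather than semisimple, so \cite{F} and \cite{BH} do not apply to it literally and the stratification theory would have to be re-established there --- precisely the bookkeeping you defer. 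Finally, for the second assertion both you and the paper defer to \cite[Corollary 3.6]{BH2}, which is fine, but note that your gloss ``nontrivial quotient of the deformation space, hence singular'' is not a valid inference on its own: finite quotients of smooth spaces can be smooth, and this is exactly where the hypothesis ${\rm genus}(C)\,\geq\, 3$ enters (for genus two the coarse moduli space of semistable ${\rm SL}_2$--bundles is ${\mathbb P}^3$, smooth at every point), so the appeal to \cite{BH2} is doing real work there rather than confirming an obvious local picture.
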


\begin{proof} 
Let ${\mathcal M}^{\delta}_C(G/Z_G)^{\rm rs}$ be the sub--stack of
${\mathcal M}^{\delta}_C(G/Z_G)$ corresponding to the regularly stable
principal $C(G)$--bundles; that it is a sub--stack follows from
\cite[Lemma 2.3]{BH}. By \cite[Theorem II.6]{F} or \cite[Theorem 
2.4]{BH} it follows
that the complement of ${\mathcal M}^{\delta}_C(G/Z_G)^{\rm rs}$ in 
${\mathcal M}^{\delta}_C(G/Z_G)$ has codimension at least two. Taking
inverse image for the morphism $\Phi\vert_{{\mathcal 
M}^{\delta}_C(G)}$ in \eqref{phr}, the 
first part of the proposition follows.

The proof that the smooth locus of $M^{\delta}_C(G)^{\rm ss}$ is 
$M^{\delta}_C(G)^{\rm rs}$ is identical to that of
\cite[Corollary 3.6]{BH2}.
\end{proof}

The morphism to the coarse moduli space
\begin{equation}\label{p11}
p\, :\, {\mathcal M}^{\delta}_C(G)^{\rm rs} \,\longrightarrow\,
M^{\delta}_C(G)^{\rm rs}
\end{equation}
defines a gerbe banded by $Z_G$. Let
\begin{equation}\label{psi11}
\psi\,\in\, H^2(M^{\delta}_C(G)^{\rm rs},\, Z_G)
\end{equation}
be the class of this gerbe.

Just like Theorem \ref{gerbe-sc},
we now have the following moduli space version of Theorem 
\ref{thm0-d}.

\begin{theorem} \label{exact-gerbe}
There is an exact sequence
$$
0 \longrightarrow {\rm Pic}(M^{\delta}_C(G)^{\rm rs})
\stackrel{p^*}{\longrightarrow}
{\rm Pic}({\mathcal M}^{\delta}_C(G)^{\rm rs})
\stackrel{{\rm wt}}{\longrightarrow} 
{\rm Hom}(Z_G,{\mathbb G}_m)\stackrel{\psi_*}{\longrightarrow}
{\rm Br}(M^{\delta}_C(G)^{\rm rs}) \longrightarrow 0\, ,
$$
where $p$ is the morphism in \eqref{p11},
${\rm wt}$ is the weight defined earlier, and $\psi_*$
takes a homomorphism $\eta\,:\,Z_G\,
\longrightarrow\, {\mathbb G}_m$ to the image of $\psi$ under the
homomorphism $H^2(M,\,Z)\longrightarrow H^2(M,\, {\mathbb G}_m)$
induced by $\eta$.
\end{theorem}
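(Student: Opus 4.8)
The plan is to transcribe the proof of Theorem~\ref{gerbe-sc} into the twisted setting: the entire statement is a formal consequence of Lemma~\ref{pic-bra} applied to the gerbe $p$ of \eqref{p11}, once one knows that the Brauer group of the regularly stable locus of the twisted stack is trivial. Thus the one substantive step is to prove
$$
{\rm Br}({\mathcal M}^{\delta}_C(G)^{\rm rs})\,=\,0\, .
$$

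To obtain this vanishing I would combine the two results already available. Theorem~\ref{thm0-d} gives ${\rm Br}({\mathcal M}^{\delta}_C(G))\,=\,0$ for the full twisted stack, while the first assertion of Proposition~\ref{bis-hoff-twisted} guarantees that the complement of ${\mathcal M}^{\delta}_C(G)^{\rm rs}$ in the smooth stack ${\mathcal M}^{\delta}_C(G)$ has codimension at least two. By the purity property of the cohomological Brauer group on a smooth algebraic stack---excising a closed substack of codimension at least two leaves $H^2_{\text{\'et}}(-,{\mathbb G}_m)_{\rm torsion}$ unchanged---the restriction map ${\rm Br}({\mathcal M}^{\delta}_C(G))\to{\rm Br}({\mathcal M}^{\delta}_C(G)^{\rm rs})$ is an isomorphism, and the target therefore vanishes. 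This is exactly the mechanism that underlies the one-line proof of Theorem~\ref{gerbe-sc}, with Theorem~\ref{thm0-d} and Proposition~\ref{bis-hoff-twisted} now playing the roles of Theorem~\ref{thm0} and Proposition~\ref{bis-hoff}(1).

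With the vanishing secured, I would invoke Lemma~\ref{pic-bra} for the gerbe $p\,:\,{\mathcal M}^{\delta}_C(G)^{\rm rs}\to M^{\delta}_C(G)^{\rm rs}$ banded by $Z_G$, whose class is the $\psi$ of \eqref{psi11}. The irreducibility hypothesis of the lemma is met because $M^{\delta}_C(G)^{\rm rs}$, being the smooth locus of a single connected component of the coarse moduli space, is irreducible. The lemma's five-term sequence has ${\rm Pic}(M^{\delta}_C(G)^{\rm rs})$, ${\rm Pic}({\mathcal M}^{\delta}_C(G)^{\rm rs})$, ${\rm Hom}(Z_G,{\mathbb G}_m)$ and ${\rm Br}(M^{\delta}_C(G)^{\rm rs})$ as its first four terms, with the maps $p^*$, ${\rm wt}$ and $\psi_*$ described in the statement, and it terminates in the restriction $p^*\,:\,{\rm Br}(M^{\delta}_C(G)^{\rm rs})\to{\rm Br}({\mathcal M}^{\delta}_C(G)^{\rm rs})$. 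Substituting ${\rm Br}({\mathcal M}^{\delta}_C(G)^{\rm rs})=0$ for this last group truncates the sequence into precisely the claimed short exact sequence.

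The only genuinely delicate ingredient is the purity step for the stack: one must know that the cohomological Brauer group does not see a codimension-at-least-two closed substack. On a smooth variety this is Grothendieck's purity theorem, and for the smooth stacks ${\mathcal M}^{\delta}_C(G)$ I would reduce to the variety case through a smooth atlas, checking that the codimension bound of Proposition~\ref{bis-hoff-twisted} is preserved under pullback along the presentation. Once this is granted, the remainder of the argument is a direct, purely formal transcription of the untwisted case.
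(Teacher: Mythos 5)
Your proposal is correct and takes essentially the same route as the paper: the paper's proof (given implicitly by the remark ``just like Theorem \ref{gerbe-sc}'') is precisely to combine Theorem \ref{thm0-d} with the codimension-two statement of Proposition \ref{bis-hoff-twisted} to get ${\rm Br}({\mathcal M}^{\delta}_C(G)^{\rm rs})=0$, and then to apply Lemma \ref{pic-bra} to the gerbe $p$ of \eqref{p11}, truncating the five-term sequence. Your only addition is to make explicit the purity step (invariance of the cohomological Brauer group under removing a closed substack of codimension at least two from a smooth stack), which the paper uses silently in both the untwisted and twisted cases.
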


\section{Brauer group of moduli: $G$ is not simply
connected}\label{sec6}

In this section we compute the Brauer group of the moduli stack and
also that of the smooth locus of the moduli space of principal 
$G$-bundles when the semisimple group $G$ is not necessarily 
simply connected.

Let ${\widetilde G}$ be the universal cover of the semisimple
group $G$. The fundamental group $\pi_1(G)$ is a subgroup of the
center
$$
Z_{\widetilde G}\, \subset\, \widetilde G\, .
$$
Fix a maximal torus $\widetilde{T}\,\subset\, \widetilde G$.
We fix an isomorphism 
$$
\rho\,:\, \widetilde{T} \,\longrightarrow\, ({\mathbb G}_m)^\sigma~\,
~\, \text{~with~}~\, \rho(\pi_1(G)) \,=\,
\prod_{i=1}^\sigma \mu_{r_i}\, .
$$

There is a canonical isomorphism $\pi_0({\mathcal M}_C(G))
\,=\,\pi_0(L(G))\,\cong\, 
\pi_1(G)$. Take any $$\delta\,\in \,\pi_1(G)\, .$$ Let
$$
L(G)^{\delta}\, \subset\, L(G)~\,~\text{~and~}
~\,~
{\mathcal M}_C(G)^{\delta}\, \subset\, {\mathcal M}_C(G)
$$
be the connected components corresponding to $\delta$.

The above notation ${\mathcal M}_C(G)^{\delta}$ for a
connected component should not be confused with the previous
notation ${\mathcal M}^{\delta}_C(G)$ for a twisted moduli
stack. It should also be clarified that the twisted moduli
stack (or the twisted moduli space) is defined only for
simply connected groups.

The following proposition is proved in \cite{BLS}; see
\cite[p. 186, Proposition 1.3]{BLS}.

\begin{proposition}\label{unifG}
Take any $\delta \,\in \,\pi_1(G)$. For any 
$\zeta \,\in\, L(G)^{\delta}({\mathbb C})$, there is a natural 
isomorphism
$$
{\mathcal M}_C(G)^{\delta}\,\cong\, (\zeta^{-1}L_C(G)\zeta)
\backslash {\mathcal Q}_{{\widetilde G}}\, .
$$
\end{proposition}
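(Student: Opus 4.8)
The plan is to derive the statement from the uniformization theorem by a single left--translation. By uniformization ${\mathcal M}_C(G)\,\cong\, L_C(G)\backslash{\mathcal Q}_G$ with ${\mathcal Q}_G\,=\,L(G)/L^+(G)$, and by Lemma \ref{lem00} the connected components of ${\mathcal Q}_G$ are indexed by $\pi_0(L(G))\,=\,\pi_1(G)$: write ${\mathcal Q}_G^{\delta}$ for the image of $L(G)^{\delta}$ under $L(G)\,\longrightarrow\,{\mathcal Q}_G$, and recall that the identity component ${\mathcal Q}_G^{0}$ is canonically ${\mathcal Q}_{\widetilde G}$. The two steps I would carry out are: (i) show that the left $L_C(G)$--action preserves each ${\mathcal Q}_G^{\delta}$, so that ${\mathcal M}_C(G)^{\delta}\,=\,L_C(G)\backslash{\mathcal Q}_G^{\delta}$; and (ii) use translation by $\zeta^{-1}$ to move the $\delta$--component onto the identity component.

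For step (ii), fix $\zeta\,\in\, L(G)^{\delta}({\mathbb C})$ and consider $\ell_{\zeta^{-1}}\,:\,{\mathcal Q}_G\,\longrightarrow\,{\mathcal Q}_G$, $gL^+(G)\,\longmapsto\,\zeta^{-1}gL^+(G)$. Since the component of $\zeta^{-1}g$ in $\pi_0(L(G))$ is $-\delta+\delta\,=\,0$ for $g\,\in\, L(G)^{\delta}$, this restricts to an isomorphism ${\mathcal Q}_G^{\delta}\,\stackrel{\sim}{\longrightarrow}\,{\mathcal Q}_G^{0}\,\cong\,{\mathcal Q}_{\widetilde G}$. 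The identity $\zeta^{-1}(hx)\,=\,(\zeta^{-1}h\zeta)(\zeta^{-1}x)$ shows that $\ell_{\zeta^{-1}}$ intertwines the $L_C(G)$--action with that of the conjugate $\zeta^{-1}L_C(G)\zeta$ (which again lies in $L(G)^{0}$, hence acts on ${\mathcal Q}_{\widetilde G}$), and so descends to the asserted isomorphism
$$
{\mathcal M}_C(G)^{\delta}\,=\,L_C(G)\backslash{\mathcal Q}_G^{\delta}\,\stackrel{\sim}{\longrightarrow}\,(\zeta^{-1}L_C(G)\zeta)\backslash{\mathcal Q}_{\widetilde G}\, .
$$
Connectedness of each quotient is automatic since ${\mathcal Q}_G^{\delta}\,\cong\,{\mathcal Q}_{\widetilde G}$ is connected.

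The hard part is step (i), namely that $L_C(G)\,\subseteq\, L(G)^{0}$; note that $L_C(G)$ is itself disconnected in general (its group of components being $\pi_1(G)^{2g}$), so the real content is that its image in $\pi_0(L(G))\,=\,\pi_1(G)$ is trivial. I would prove this by interpreting $\pi_0(L(G))$ as the obstruction group $H^1(\text{Spec}\,{\mathbb C}((t)),\,\pi_1(G))$ to lifting a loop to $\widetilde G$: for $h\,\in\, L_C(G)\,=\,G({\mathcal O}(C-p_0))$ this obstruction is the restriction to the punctured disc at $p_0$ of a global class in $H^1_{\text{\'et}}(C-p_0,\,\pi_1(G))$. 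As $\pi_1(G)$ is abelian, such a class factors through $H_1(C-p_0,\,{\mathbb Z})\,=\,{\mathbb Z}^{2g}$, in which the loop around the single puncture $p_0$ is a product of commutators and so vanishes; hence the restriction is zero and $h\,\in\, L(G)^{0}$. Equivalently, in moduli terms, left multiplication by $L_C(G)$ is a change of trivialization over $C-p_0$ and therefore preserves the topological type in $\pi_1(G)$ of the glued $G$--bundle, which is precisely the assertion that $L_C(G)$ acts within the components of ${\mathcal Q}_G$.
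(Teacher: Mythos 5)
Your argument is correct, and in fact it supplies something the paper itself omits: the paper gives no proof of Proposition \ref{unifG}, deferring entirely to \cite[p.~186, Proposition~1.3]{BLS}, and your two--step argument is essentially the proof given there. Step (i) — that the composite $L_C(G)\rightarrow L(G)\rightarrow \pi_0(L(G))=\pi_1(G)$ is trivial, even though $L_C(G)$ is itself disconnected — is exactly the non--obvious input, and your justification is the standard one: the class of $h\in G({\mathcal O}(C-p_0))$ in $\pi_0(L(G))\cong H^1(\operatorname{Spec}{\mathbb C}((t)),\pi_1(G))$ is the restriction of a global class on $C-p_0$, and the puncture loop is a product of commutators, hence dies in any abelian coefficient group; your alternative moduli--theoretic phrasing (left multiplication is a change of trivialization over $C-p_0$, so preserves topological type) is equally valid. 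Step (ii), translation by $\zeta^{-1}$ together with the intertwining identity $\zeta^{-1}(hx)=(\zeta^{-1}h\zeta)(\zeta^{-1}x)$, is also as in \cite{BLS}. The only point worth making explicit is the equivariance implicit in your last step: the identification ${\mathcal Q}_G^{0}\cong{\mathcal Q}_{\widetilde G}$ of Lemma \ref{lem00} intertwines the action of $L(G)^{0}$ on ${\mathcal Q}_G^{0}$ with the natural action on ${\mathcal Q}_{\widetilde G}$, because the $L(\widetilde G)$--action on ${\mathcal Q}_{\widetilde G}$ factors through the image $L(G)^{0}$ of $L(\widetilde G)$ in $L(G)$ (the central subgroup $\pi_1(G)\subset L^{+}(\widetilde G)$ acts trivially on ${\mathcal Q}_{\widetilde G}$); with that remark, the conjugate group $\zeta^{-1}L_C(G)\zeta\subset L(G)^{0}$ acts on ${\mathcal Q}_{\widetilde G}$ precisely as the statement requires, and your descent to the quotient stacks is complete.
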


Let
\begin{equation}\label{qdg}
q^{\delta}_G\,:\, {\mathcal Q}_{{\widetilde G}}\,
\longrightarrow\, {\mathcal M}_C(G)^{\delta}
\end{equation}
be the quotient morphism in Proposition \ref{unifG}.

Let
\begin{equation}\label{qdg2}
\widehat{q}^{\delta}_{\widetilde G}\,:\, {\mathcal Q}_{{\widetilde G}}
\, \longrightarrow\, {\mathcal M}^{\delta}_C({\widetilde G})
\end{equation}
be the quotient morphism to the twisted moduli stack in
Proposition \ref{unifG0}. There is a natural morphism of stacks
\begin{equation}\label{mul}
\gamma \,:\,{\mathcal M}^{\delta}_C({\widetilde G})\,\longrightarrow\,
{\mathcal M}_C(G)^{\delta}\, .
\end{equation}
This morphism $\gamma$ takes the semistable locus in
${\mathcal M}^{\delta}_C({\widetilde G})$ to the
semistable locus in ${\mathcal M}_C(G)^{\delta}$; this is an
immediately consequence of \cite[p. 319, Proposition 3.17]{Ra}.
Hence $\gamma$ induces a morphism
\begin{equation}\label{mul2}
\gamma_1\,:\, M^\delta_C({\widetilde G})\,\longrightarrow\,
M_C(G)^{\delta}
\end{equation}
between the corresponding coarse moduli spaces of semistable
bundles.

Let
$$
M_C(G)^{\delta, {\rm rs}}\, \subset\, M_C(G)^{\delta}
$$
be the smooth locus; we recall that $M_C(G)^{\delta, {\rm rs}}$
parametrizes the regularly stable bundles in $M_C(G)^{\delta}$
(see \cite[Corollary 3.6]{BH2}). Define the finite group
\begin{equation}\label{gam}
\Gamma\, :=\, H^1(C,\, \pi_1(G))\, .
\end{equation}

\begin{lemma}\label{lema}
\begin{enumerate}
\item The morphism $q_G^{\delta}$ in \eqref{qdg} factors as follows:
$$ 
{\mathcal Q}_{{\widetilde G}} \,\stackrel{\widehat{q}_{\widetilde 
G}^{\delta}} 
{\longrightarrow} \, {\mathcal M}^{\delta}_C({\widetilde G})
\,\stackrel{\gamma}{\longrightarrow}\,
{\mathcal M}_C(G)^{\delta}\, ,
$$
where $\widehat{q}_{\widetilde G}^{\delta}$ and $\gamma$ are
defined in \eqref{qdg2} and \eqref{mul} respectively.

\item The restriction of the morphism $\gamma_1$ in
\eqref{mul2} to the Zariski open subset
$$\gamma_1^{-1}(M_C(G)^{\delta, {\rm rs}}) \,\subset\,
M^{\delta}_C({\widetilde G})^{\rm rs}$$ defines a principal
$\Gamma$--bundle over $M_C(G)^{\delta, {\rm rs}}$, where
$\Gamma$ is defined in \eqref{gam}. 
\end{enumerate}
\end{lemma}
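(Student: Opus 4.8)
The plan for part (1) is to read the factorization off the uniformization isomorphisms. By Propositions \ref{unifG0} and \ref{unifG}, for compatibly chosen $\zeta$ both ${\mathcal M}^\delta_C({\widetilde G})$ and ${\mathcal M}_C(G)^\delta$ are presented as quotients of the \emph{same} ind-Grassmannian ${\mathcal Q}_{\widetilde G}$, by $\zeta^{-1}L_C({\widetilde G})\zeta$ and $\zeta^{-1}L_C(G)\zeta$ respectively, with $\widehat q^\delta_{\widetilde G}$ in \eqref{qdg2} and $q^\delta_G$ in \eqref{qdg} the two quotient morphisms. The covering homomorphism ${\widetilde G}\to G$ induces $L_C({\widetilde G})\to L_C(G)$, and together with the conjugation action of $L(G_{\rm ad})$ on $L({\widetilde G})$ it intertwines the two presentations, sending $\zeta^{-1}L_C({\widetilde G})\zeta$ into $\zeta^{-1}L_C(G)\zeta$. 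Thus $q^\delta_G$, the quotient by the larger group, factors through $\widehat q^\delta_{\widetilde G}$ followed by the induced morphism of quotients; the one point requiring care is that this intrinsic morphism coincides with the extension-of-structure-group map $\gamma$ of \eqref{mul}, which is part of the functoriality of uniformization in \cite{BLS}.

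For part (2), I would first make $\Gamma=H^1(C,\pi_1(G))$ act on the twisted moduli. An element of $\Gamma$ is the class of a $\pi_1(G)$-torsor on $C$; since $\pi_1(G)\subset Z_{\widetilde G}$ is central, tensoring a twisted ${\widetilde G}$-bundle by such a torsor gives another one, and this descends to an action on $M^\delta_C({\widetilde G})^{\rm rs}$. The action preserves all the defining data: it preserves (semi)stability and leaves the automorphism group unchanged (so it preserves regular stability), it fixes the determinant because the composite $\pi_1(G)\hookrightarrow Z_{\widetilde G}\hookrightarrow {\widetilde T}\to {\widetilde T}/Z_{\widetilde G}$ is trivial, and it fixes the component $\delta$ because $\pi_1(G)$ dies in $G_{\rm ad}$. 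By the construction of $\gamma$ the induced map $\gamma_1$ in \eqref{mul2} is $\Gamma$-invariant. One should also note $\Gamma\cong \pi_0(L_C(G))$, so this action is the same as the one implicit in the loop-group quotient of part (1).

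Next I would identify the fibers over the regularly stable locus as $\Gamma$-torsors. Twisting the central extension $1\to\pi_1(G)\to{\widetilde G}\to G\to1$ by $\mathrm{Ad}(P)$ produces a long exact cohomology sequence showing that the fiber of the pushforward over a $G$-bundle $[P]$ is $H^1(C,\pi_1(G))/\mathrm{Im}(\partial_P)$, on which $\Gamma$ acts transitively with stabilizers $\mathrm{Im}(\partial_P)$, where $\partial_P\colon \mathrm{Aut}(P)\to H^1(C,\pi_1(G))$ is the connecting map. The fiber is nonempty over the $\delta$-component because $\gamma$ is surjective there (it is surjective since $q^\delta_G$ is and factors through $\gamma$ by part (1)). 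When $P$ is regularly stable $\mathrm{Aut}(P)=Z_G$, and $\partial_P$ vanishes on $Z_G$ because every constant central automorphism lifts to a constant section of $Z_{\widetilde G}$ (as $Z_{\widetilde G}\to Z_G$ is onto on ${\mathbb C}$-points). Hence each such fiber is a simply transitive $\Gamma$-set. I would also check the inclusion $\gamma_1^{-1}(M_C(G)^{\delta,{\rm rs}})\subset M^\delta_C({\widetilde G})^{\rm rs}$: if $\gamma(E)=P$ is regularly stable then $\mathrm{Aut}(E)\to\mathrm{Aut}(P)=Z_G$ has kernel inside $\pi_1(G)\subset Z_{\widetilde G}$ and already hits all of $Z_G$ from $Z_{\widetilde G}$, forcing $\mathrm{Aut}(E)=Z_{\widetilde G}$.

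Finally, since $\Gamma$ is finite and, by the above, acts freely on the smooth variety $\gamma_1^{-1}(M_C(G)^{\delta,{\rm rs}})$ (smooth by Proposition \ref{bis-hoff-twisted}) with orbit space $M_C(G)^{\delta,{\rm rs}}$, in characteristic zero the quotient map is an \'etale principal $\Gamma$-bundle; identifying the target with the quotient uses the universal property of the coarse moduli space together with the orbit description of the fibers. The step I expect to be the main obstacle is precisely the freeness, i.e. the vanishing $\mathrm{Im}(\partial_P)=0$: this is where regular stability is indispensable, since for a merely stable or strictly semistable $P$ the automorphism group can exceed $Z_G$, the map $\partial_P$ may be nonzero, the stabilizers become nontrivial, and $\gamma_1$ ceases to be a torsor.
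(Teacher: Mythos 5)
Your proposal is correct and is essentially the paper's own approach, written out in full: the paper handles part (1) by citing \cite[p.~189, (2.4)]{BLS} --- exactly the compatibility of the two uniformization presentations under $L_C({\widetilde G})\to L_C(G)$ that you invoke --- and handles part (2) with the one-line remark that it ``follows from the definition of regularly stable bundles,'' which is precisely your key step: regular stability gives $\mathrm{Aut}(P)\,=\,Z_G$, every such central automorphism lifts to $Z_{\widetilde G}$, so the stabilizers $\mathrm{Im}(\partial_P)$ of the $\Gamma$--action vanish and the fibers are free transitive $\Gamma$--sets. Your write-up merely supplies the details (the twisting action of $\Gamma$, transitivity on fibers, nonemptiness via surjectivity of $\gamma$, and the finite-free-action-in-characteristic-zero argument) that the paper delegates to the citation and to the reader.
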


\begin{proof}
See \cite[p. 189, (2.4)]{BLS} for the first part. The second
part follows from the definition of regularly stable bundles.
\end{proof}

\begin{theorem}\label{th-m-s}
The Brauer group computation for $M_C(G)^{\delta, {\rm rs}}$
and $M^{\delta}_C({\widetilde G})^{\rm rs}$ are related by the
following exact sequence:
$$
0\, \longrightarrow\, H^1(\Gamma, \,{\mathbb C}^*)\,\longrightarrow\,
H^1(M_C(G)^{\delta, {\rm rs}},\, {\mathcal O}^*) \,\longrightarrow\, 
H^1(M^{\delta}_C({\widetilde G})^{\rm rs},\,{\mathcal O}^*)
$$
$$
\longrightarrow \,
H^2(\Gamma, \,{\mathbb C}^*)\, \longrightarrow \, 
{\rm Br}(M_C(G)^{\delta, {\rm rs}})\,\longrightarrow \, 
{\rm Br}(M^{\delta}_C({\widetilde G})^{\rm rs})\,\longrightarrow
\, 0\, .
$$
\end{theorem}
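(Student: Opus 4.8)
The plan is to derive this exact sequence as the five-term exact sequence (the low-degree exact sequence) of the Hochschild–Serre spectral sequence associated to the principal $\Gamma$--bundle established in Lemma \ref{lema}(2). Writing $f\,:\,M^{\delta}_C({\widetilde G})^{\rm rs}|_{\gamma_1^{-1}(M_C(G)^{\delta,{\rm rs}})}\,\longrightarrow\, M_C(G)^{\delta,{\rm rs}}$ for this bundle with finite structure group $\Gamma\,=\,H^1(C,\pi_1(G))$, the étale quotient presents $M_C(G)^{\delta,{\rm rs}}$ as the quotient $[M^{\delta}_C({\widetilde G})^{\rm rs}/\Gamma]$ on the relevant open set, so that the Leray/descent spectral sequence for the sheaf ${\mathbb G}_m$ along $f$ takes the form
\begin{equation*}
E_2^{p,q}\,=\,H^p(\Gamma,\, H^q(M^{\delta}_C({\widetilde G})^{\rm rs},\,{\mathcal O}^*))\,\Longrightarrow\,H^{p+q}(M_C(G)^{\delta,{\rm rs}},\,{\mathcal O}^*)\, .
\end{equation*}
The first two rows $q=0,1$ together with the abutment in total degrees $1$ and $2$ produce precisely a six-term sequence of the stated shape, provided the bottom row reduces correctly.

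The key computations are the identifications of the $q=0$ terms. First I would check $H^0(\Gamma,\,{\mathcal O}^*(M^{\delta}_C({\widetilde G})^{\rm rs}))\,=\,{\mathbb C}^*$: since the twisted moduli space is irreducible (its smooth locus being connected), the global invertible functions are the constants, and $\Gamma$ acts trivially on ${\mathbb C}^*$, so $H^p(\Gamma,\,{\mathbb C}^*)$ appears as the $q=0$ row and supplies the initial terms $H^1(\Gamma,\,{\mathbb C}^*)$ and $H^2(\Gamma,\,{\mathbb C}^*)$. The $q=1$ row gives $H^0(\Gamma,\,\mathrm{Pic}(M^{\delta}_C({\widetilde G})^{\rm rs}))$, which by definition is the $\Gamma$--invariants of the Picard group and sits between $H^1(M_C(G)^{\delta,{\rm rs}},\,{\mathcal O}^*)$ and $H^1(M^{\delta}_C({\widetilde G})^{\rm rs},\,{\mathcal O}^*)$; the pullback $f^*$ on Picard groups lands in these invariants, giving the map $H^1(M_C(G)^{\delta,{\rm rs}},\,{\mathcal O}^*)\,\longrightarrow\,H^1(M^{\delta}_C({\widetilde G})^{\rm rs},\,{\mathcal O}^*)$ and, via the transgression $d_2\,:\,E_2^{0,1}\,\longrightarrow\,E_2^{2,0}$, the connecting map into $H^2(\Gamma,\,{\mathbb C}^*)$.

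The final and most delicate step is the terminal segment
\begin{equation*}
H^2(\Gamma,\,{\mathbb C}^*)\,\longrightarrow\,\mathrm{Br}(M_C(G)^{\delta,{\rm rs}})\,\longrightarrow\,\mathrm{Br}(M^{\delta}_C({\widetilde G})^{\rm rs})\,\longrightarrow\,0\, .
\end{equation*}
By Grothendieck's and Gabber's theorems recalled in Section \ref{sec2}, on these smooth quasiprojective varieties the Brauer group coincides with $H^2_{{\text {\'et}}}(-,\,{\mathbb G}_m)_{{\rm torsion}}\,=\,H^2(-,\,{\mathcal O}^*)_{{\rm torsion}}$, so I must read off the degree-two abutment. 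The map $\mathrm{Br}(M_C(G)^{\delta,{\rm rs}})\,\longrightarrow\,\mathrm{Br}(M^{\delta}_C({\widetilde G})^{\rm rs})$ is the edge map $E_\infty^{0,2}\,\hookrightarrow\,E_2^{0,2}$ composed with the inclusion into the $q=2$ term, and the kernel is built from the contributions of $E_\infty^{2,0}$ and $E_\infty^{1,1}$.

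The main obstacle I anticipate is surjectivity of the pullback $\mathrm{Br}(M_C(G)^{\delta,{\rm rs}})\,\longrightarrow\,\mathrm{Br}(M^{\delta}_C({\widetilde G})^{\rm rs})$, which amounts to showing that the differentials landing in or emanating from $E_2^{0,2}$ vanish after passing to the torsion Brauer subgroups, so that no piece of $\mathrm{Br}(M^{\delta}_C({\widetilde G})^{\rm rs})$ is lost. The cleanest route is to exploit the splitting provided by a section at the level of stacks: the morphism $\gamma$ in \eqref{mul} is compatible with the gerbe structures \eqref{psi11} banded by $Z_{\widetilde G}$ on the source and by $Z_G$ on the target, and the image of $\Gamma$ inside the relevant Brauer contribution can be controlled by comparing these two $Z$--gerbes. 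Concretely, I would use that $\Gamma$ is finite so that $E_2^{p,q}$ is all torsion for $p>0$, collapsing the spectral sequence onto torsion Brauer groups; the surjectivity then follows because the transgression obstruction $d_2\,:\,E_2^{0,2}\,\longrightarrow\,E_2^{2,1}$ can be shown to vanish on torsion classes using the product structure of $\mathrm{Pic}$ and $\mathrm{Br}$ together with the fact that the $\Gamma$--action on $\mathrm{Br}(M^{\delta}_C({\widetilde G})^{\rm rs})$ is trivial. That triviality of the $\Gamma$--action, and hence the identification $E_2^{0,2}\,=\,\mathrm{Br}(M^{\delta}_C({\widetilde G})^{\rm rs})$, is the crux: it should come from the geometric fact that $\Gamma\,=\,H^1(C,\pi_1(G))$ acts by tensoring with torsion line bundles pulled back from the Jacobian, an operation which preserves every projective bundle class up to equivalence.
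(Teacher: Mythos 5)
Your starting framework coincides with the paper's: it likewise applies the Serre (Hochschild--Serre) spectral sequence to the principal $\Gamma$--bundle of Lemma \ref{lema}(2) and reads off the six--term sequence from the rows $q=0,1$. But note a preliminary point you pass over: that bundle lives only over $\gamma_1^{-1}(M_C(G)^{\delta,{\rm rs}})$, while the theorem is stated for all of $M^{\delta}_C({\widetilde G})^{\rm rs}$. One must first show that the complement of $\gamma_1^{-1}(M_C(G)^{\delta,{\rm rs}})$ in $M^{\delta}_C({\widetilde G})^{\rm rs}$ has codimension at least two --- the paper does this by observing that $\gamma_1$ in \eqref{mul2} is finite (it pulls back an ample line bundle to an ample line bundle) --- so that the Picard and Brauer groups of the open subset agree with those of $M^{\delta}_C({\widetilde G})^{\rm rs}$. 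Writing the $E_2$--page directly with the full space, as you do, silently assumes exactly this.

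The genuine gap is the surjectivity of ${\rm Br}(M_C(G)^{\delta, {\rm rs}})\,\longrightarrow\,{\rm Br}(M^{\delta}_C({\widetilde G})^{\rm rs})$, which you propose to obtain by showing that the differentials out of $E_2^{0,2}$ ``vanish on torsion classes'' and that $\Gamma$ acts trivially on the Brauer group. This mechanism does not work as described: since $\Gamma$ is finite, the targets $E_2^{2,1}\,=\,H^2(\Gamma,\,{\rm Pic})$ and $E_3^{3,0}\,\subset\, H^3(\Gamma,\,{\mathbb C}^*)$ are themselves torsion groups, so restricting attention to torsion classes forces no vanishing at all; and your justification of the trivial $\Gamma$--action --- that tensoring by torsion line bundles ``preserves every projective bundle class up to equivalence'' --- is precisely the assertion to be proved, not an argument for it. The paper's route to surjectivity is different and is the real content of the proof: by Theorem \ref{exact-gerbe}, every class in ${\rm Br}(M^{\delta}_C({\widetilde G})^{\rm rs})$ is of the form $\psi_*(\chi)$ for a character $\chi$ of $Z_{\widetilde G}$, where $\psi$ is the class of the gerbe ${\mathcal M}^{\delta}_C({\widetilde G})^{\rm rs}\,\longrightarrow\, M^{\delta}_C({\widetilde G})^{\rm rs}$; one then shows this gerbe descends to $M_C(G)^{\delta,{\rm rs}}$, by letting ${\mathcal M}_C(\pi_1(G))\,\cong\,\Gamma\times B(\pi_1(G))$ act on the open sub--stack ${\mathcal M}^{\delta}_C({\widetilde G})'$ lying over $\gamma_1^{-1}(M_C(G)^{\delta,{\rm rs}})$ and forming the quotient ${\mathcal M}^{\delta}_C({\widetilde G})'/\Gamma\,\longrightarrow\, M_C(G)^{\delta,{\rm rs}}$. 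Each generator $\psi_*(\chi)$ is then pulled back from the base, which yields the surjectivity of \eqref{t1} directly (and, as a byproduct, the triviality of the $\Gamma$--action and the vanishing of the differentials you wanted). Without this gerbe--descent step, or a genuine substitute for it, your argument does not close.
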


\begin{proof}
Consider the open subset $\gamma_1^{-1}(M_C(G)^{\delta, {\rm 
rs}}) \,\subset\, M^{\delta}_C({\widetilde G})^{\rm rs}$ in
Lemma \ref{lema}. We will show that its complement is
of codimension at least two. For that first note that
the morphism $\gamma_1$ in \eqref{mul2} is finite because
for any ample line bundle $L_0$ on $M^{\delta}_C({\widetilde 
G})^{\rm rs}$, the pullback $\gamma^*_1 L_0$ is ample. Therefore,
the fact that the codimension of the complement of the open
subset
$M_C(G)^{\delta, {\rm rs}}\, \subset\, M_C(G)$ is at least two
implies that the codimension of the complement of
$\gamma_1^{-1}(M_C(G)^{\delta, {\rm rs}})$ is at least two.

We can apply the Serre spectral sequence to the
principal $\Gamma$--bundle
$$
\gamma_1\, :\, \gamma_1^{-1}(M_C(G)^{\delta, {\rm rs}}) 
\,\subset\, M^{\delta}_C({\widetilde G})^{\rm rs}
$$
in Lemma \ref{lema} and get the following exact sequence:
$$
0\, \longrightarrow\,
H^1(\Gamma, \,{\mathbb C}^*)\, \longrightarrow \,
H^1(M_C(G)^{\delta, {\rm rs}},\, {\mathcal O}^*) \,\longrightarrow\, 
H^1(M^{\delta}_C({\widetilde G})^{\rm rs},\,{\mathcal O}^*)
$$
$$
\longrightarrow \,
H^2(\Gamma, \,{\mathbb C}^*)\, \longrightarrow \, 
{\rm Br}(M_C(G)^{\delta, {\rm rs}})\,\longrightarrow \, 
{\rm Br}(M^{\delta}_C({\widetilde G})^{\rm rs}) \, .
$$
It remains to show that the above homomorphism
\begin{equation}\label{t1}
{\rm Br}(M_C(G)^{\delta, {\rm rs}})\, \longrightarrow \,
{\rm Br}(M^{\delta}_C({\widetilde G})^{\rm rs})
\end{equation}
is surjective.

Let ${\mathcal M}^{\delta}_C({\widetilde G})'
\, \subset\,{\mathcal M}^{\delta}_C({\widetilde G})^{\rm rs}$
be the open sub-stack that lies over 
$\gamma_1^{-1}(M_C(G)^{\delta, {\rm rs}})$.
The moduli stack ${\mathcal M}_C(\pi_1(G))$ of principal
$\pi_1(G)$--bundles on $C$ acts naturally on
${\mathcal M}^{\delta}_C({\widetilde G})'$. We note
that ${\mathcal M}_C(\pi_1(G))$ is a gerbe over $\Gamma$ (defined
in \eqref{gam}) banded by $\pi_1(G)$. Since $\Gamma$
is a finite group, this gerbe is isomorphic to
$\Gamma\times B(\pi_1(G))$. Fixing such an isomorphism,
we construct an action of $\Gamma$ on
${\mathcal M}^{\delta}_C({\widetilde G})'$
by restricting the action of ${\mathcal M}_C(\pi_1(G))$.
The quotient for this action
$$
{\mathcal M}^{\delta}_C({\widetilde G})'/\Gamma\,
\longrightarrow\,
\gamma_1^{-1}(M_C(G)^{\delta, {\rm rs}})/\Gamma\,=\,
M_C(G)^{\delta, {\rm rs}}
$$
is a gerbe banded by $\pi_1(G)$. 

Recall that the homomorphism $\psi_*$ in Theorem
\ref{exact-gerbe} is surjective, so ${\rm 
Br}({\mathcal M}^{\delta}_C({\widetilde 
G})^{\rm rs})$ is constructed from the gerbe
$$
{\mathcal M}^{\delta}_C({\widetilde G})^{\rm rs}
\, \longrightarrow\, {M}^{\delta}_C({\widetilde G})^{\rm rs}
$$
(which is banded by $Z_{\widetilde G}$)
using ${\rm Hom}(Z_{\widetilde G},{\mathbb G}_m)$.
We have shown that this gerbe descends to
$M_C(G)^{\delta, {\rm rs}}$. Hence the homomorphism
in \eqref{t1} is surjective.
\end{proof}

Let ${\mathcal M}_C(G)^{\delta, {\rm rs}}$ be the open sub--stack
of ${\mathcal M}_C(G)^{\delta}$ defined by the regularly stable
principal bundles. The morphism to the coarse moduli space
\begin{equation}\label{p}
p\,:\, {\mathcal M}_C(G)^{\delta, {\rm rs}}\,\longrightarrow\, 
M_C(G)^{\delta,{\rm rs}}
\end{equation}
is a gerbe banded by $Z_G$, and we have the induced homomorphism 
\begin{equation}\label{p2}
p^*\,:\,{\rm Br}(M_C(G)^{\delta, {\rm rs}})\,\longrightarrow
\, {\rm Br}({\mathcal M}_C(G)^{\delta})\, .
\end{equation}
Now the exact sequence in Lemma 
\ref{pic-bra} can be used in relating the Brauer group of the smooth
locus of the moduli space to the Brauer group of the
stack. The kernel of the homomorphism $p^*$ in \eqref{p2}
can be computed using a result of \cite{BH} which will be
recalled below.

Let $\Psi \,\subset\, {\rm Hom}(Z_{\widetilde G}\otimes_{\mathbb Z} 
Z_{\widetilde G}\, , {\mathbb Q}/{\mathbb Z})$ be the abelian
group of all symmetric bilinear maps
$$
b\,:\, Z_{\widetilde G}\times Z_{\widetilde G} 
\,\longrightarrow\,{\mathbb Q}/{\mathbb Z}
$$
that come from an even $W$--invariant symmetric bilinear form
$$
\Lambda_{\rm coroots}\times \Lambda_{\rm coroots}\,\longrightarrow
\,{\mathbb Z}\, .
$$ 
Let
\begin{equation}\label{PsiG}
\Psi(G)\,\subset\, \Psi
\end{equation}
be the subgroup consisting of all
elements $b$ such that $b(\pi_1(G)\times \pi_1(G))\,=\,0$.

Given an element $\delta \,\in \,\pi_1(G)$, let 
\begin{equation}\label{evG}
{\rm ev}_G^{\delta}\,:\, \Psi(G) \,\longrightarrow\, {\rm
Hom}(Z_{\widetilde G}/{\pi_1(G)}\, ,\, {\mathbb Q}/{\mathbb Z})
\end{equation}
be the evaluation map that sends any $b$ to
$b(\delta,-)\,:\,Z_G\,\longrightarrow \,{\mathbb 
Q}/{\mathbb Z}$.

The following proposition (Proposition 7.4 in \cite{BH}) computes 
the weight map.

\begin{proposition} \label{weight}
The kernel of the homomorphism 
$$p^*\,:\,{\rm Br}(M_C(G)^{\delta,{\rm rs}})\,\longrightarrow\, 
{\rm Br}({\mathcal M}_C(G)^{\delta})$$ is given by the 
${\rm Coker}({\rm ev}_C^{\delta})$.
\end{proposition}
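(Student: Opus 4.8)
The plan is to read off the kernel of $p^*$ from the exact sequence of Lemma \ref{pic-bra}, and then to match the resulting cokernel with ${\rm Coker}({\rm ev}_G^{\delta})$ by computing the weight map explicitly.

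First I would apply Lemma \ref{pic-bra} to the gerbe $p$ in \eqref{p}, which is banded by $Z_G$; write $\psi$ for its class, $M\,=\,M_C(G)^{\delta,{\rm rs}}$ and ${\mathcal M}\,=\,{\mathcal M}_C(G)^{\delta,{\rm rs}}$. The relevant segment reads
$$
{\rm Pic}({\mathcal M})\,\stackrel{{\rm wt}}{\longrightarrow}\,{\rm Hom}(Z_G,\,{\mathbb G}_m)\,\stackrel{\psi_*}{\longrightarrow}\,{\rm Br}(M)\,\stackrel{p^*}{\longrightarrow}\,{\rm Br}({\mathcal M})\, .
$$
By Proposition \ref{bis-hoff}(1), applied to the component ${\mathcal M}_C(G)^{\delta}$, the complement of ${\mathcal M}$ in the smooth stack ${\mathcal M}_C(G)^{\delta}$ has codimension at least two; the purity isomorphism for the \'etale cohomology of smooth stacks with ${\mathbb G}_m$-coefficients then gives ${\rm Br}({\mathcal M})\,=\,{\rm Br}({\mathcal M}_C(G)^{\delta})$, so the last arrow is exactly the homomorphism $p^*$ of \eqref{p2}. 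Exactness yields ${\rm ker}(p^*)\,=\,{\rm image}(\psi_*)\,\cong\,{\rm Hom}(Z_G,\,{\mathbb G}_m)/{\rm image}({\rm wt})\,=\,{\rm Coker}({\rm wt})$. Thus the proposition reduces to the equality ${\rm Coker}({\rm wt})\,=\,{\rm Coker}({\rm ev}_G^{\delta})$.

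Since $Z_{\widetilde G}$ is finite, ${\rm Hom}(Z_G,\,{\mathbb G}_m)\,=\,{\rm Hom}(Z_{\widetilde G}/\pi_1(G),\,{\mathbb Q}/{\mathbb Z})$, which is precisely the codomain of ${\rm ev}_G^{\delta}$ in \eqref{evG}. Hence it suffices to prove that the image of ${\rm wt}$ coincides with the image of ${\rm ev}_G^{\delta}$ inside this common target; the equality of cokernels is then immediate.

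The core of the argument is the explicit description of ${\rm wt}$. I would use the computation of the Picard group of the moduli stack in terms of invariant bilinear forms (\cite{BLS}): pulling a line bundle on ${\mathcal M}_C(G)^{\delta}$ back along $\gamma$ in \eqref{mul} and then along $\widehat{q}^{\delta}_{\widetilde G}$ in \eqref{qdg2} to ${\mathcal Q}_{\widetilde G}$ attaches to it a central charge $b\,\in\,\Psi$, and the descent condition from $\widetilde G$ to $G$ forces $b\,\in\,\Psi(G)$ of \eqref{PsiG}, i.e. $b(\pi_1(G)\times\pi_1(G))\,=\,0$; moreover every element of $\Psi(G)$ is so realized. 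The claim to verify is that the weight of a line bundle with central charge $b$ is the character $z\,\longmapsto\,b(\delta,\widetilde z)$ of $Z_G$, where $\widetilde z\,\in\,Z_{\widetilde G}$ is any lift of $z$. This is well defined, because $\delta\,\in\,\pi_1(G)$ and $b$ vanishes on $\pi_1(G)\times\pi_1(G)$, so replacing $\widetilde z$ by $\widetilde z+\eta$ with $\eta\,\in\,\pi_1(G)$ changes the value by $b(\delta,\eta)\,=\,0$; hence the character factors through $Z_{\widetilde G}/\pi_1(G)$ and equals ${\rm ev}_G^{\delta}(b)$. This weight formula is the main obstacle: it amounts to tracing how the residual $Z_G$-action on a regularly stable $G$-bundle acts on the fibre of the determinant-type line bundle, and is precisely the content of \cite[Proposition 7.4]{BH}. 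Granting it, ${\rm wt}$ is, on central charges, the map ${\rm ev}_G^{\delta}$, so ${\rm image}({\rm wt})\,=\,{\rm image}({\rm ev}_G^{\delta})$, and therefore ${\rm Coker}({\rm wt})\,=\,{\rm Coker}({\rm ev}_G^{\delta})$, which completes the proof.
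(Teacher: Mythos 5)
Your proposal is correct and is essentially the paper's own argument: the paper obtains this proposition in exactly the same way, by combining the exact sequence of Lemma \ref{pic-bra} applied to the gerbe \eqref{p} (with ${\rm Br}({\mathcal M}_C(G)^{\delta,{\rm rs}})\,=\,{\rm Br}({\mathcal M}_C(G)^{\delta})$ supplied by the codimension-two statement of Proposition \ref{bis-hoff}) with the computation of the weight map, which it cites from \cite[Proposition 7.4]{BH} rather than proving. The step you single out as the main obstacle --- the identification of ${\rm wt}$ with ${\rm ev}_G^{\delta}$ on central charges --- is precisely the step the paper also delegates to \cite{BH}, so your reduction matches the paper's reasoning and nothing is missing relative to it.
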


This proposition
and Theorem \ref{gerbe-sc} together imply the following:

\begin{corollary}\label{sc}
If $G$ is almost simple and simply connected, then
${\rm Br}(M_C(G)^{\rm rs})\,=\,Z^{\vee}_G$.
\end{corollary}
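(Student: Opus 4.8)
The plan is to derive Corollary \ref{sc} directly from the exact sequence furnished by Theorem \ref{gerbe-sc}, specialized to the almost simple simply connected case. Since $G$ is almost simple, the number of simple factors is $s=1$, so the relevant portion of the sequence reads
$$
0 \,\longrightarrow \,{\mathbb Z}\,\stackrel{p^*}{\longrightarrow}
\,{\mathbb Z} \,\stackrel{{\rm wt}}{\longrightarrow}\,
{\rm Hom}(Z_G,\,{\mathbb G}_m) \,\stackrel{\psi_*}{\longrightarrow}\,
{\rm Br}(M_C(G)^{\rm rs}) \,\longrightarrow\, 0\, .
$$
The right exactness already identifies ${\rm Br}(M_C(G)^{\rm rs})$ as the cokernel of the weight map ${\rm wt}\colon {\mathbb Z}\to {\rm Hom}(Z_G,{\mathbb G}_m)$. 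First I would record that for a finite abelian group such as $Z_G$, the character group ${\rm Hom}(Z_G,{\mathbb G}_m)$ is (non-canonically) isomorphic to $Z_G$ itself, and agrees with the Pontryagin-style dual $Z_G^\vee$ appearing in the statement; this is the target we must compute the cokernel in.

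The crux is therefore to understand the image of the weight map, equivalently the composite ${\mathbb Z}\xrightarrow{{\rm wt}}{\rm Hom}(Z_G,{\mathbb G}_m)$ out of the second copy of ${\mathbb Z}\cong {\rm Pic}({\mathcal M}_C(G))$. By exactness the kernel of ${\rm wt}$ is exactly the image of $p^*$, so ${\rm wt}$ factors through the quotient ${\mathbb Z}/p^*({\mathbb Z})\cong {\rm Pic}({\mathcal M}_C(G))/{\rm Pic}(M_C(G)^{\rm rs})$, and ${\rm wt}$ induces an injection of this quotient into ${\rm Hom}(Z_G,{\mathbb G}_m)$ whose cokernel is ${\rm Br}(M_C(G)^{\rm rs})$. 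The key step is thus to show that this injected image is precisely the trivial subgroup, i.e. that ${\rm wt}$ is the zero map, so that $\psi_*$ becomes an isomorphism and ${\rm Br}(M_C(G)^{\rm rs})={\rm Hom}(Z_G,{\mathbb G}_m)=Z_G^\vee$. Concretely I would argue that a generator of ${\rm Pic}({\mathcal M}_C(G))\cong{\mathbb Z}$ — for an almost simple simply connected group this is the ample generator given by the determinant-of-cohomology (theta) line bundle — already descends to the coarse moduli space $M_C(G)^{\rm rs}$, hence lies in the image of $p^*$. Since a descending line bundle carries trivial weight by the very definition of the weight map (the central automorphisms act trivially on a bundle pulled back from the coarse space), the weight of the generator vanishes, forcing ${\rm wt}=0$.

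The main obstacle I anticipate is justifying that the Picard generator of the stack descends, i.e. has trivial weight; this is where one must invoke the structure of ${\rm Pic}({\mathcal M}_C(G))$ for almost simple simply connected $G$ together with the computation of central weights of the theta bundle. One clean way to sidestep a direct weight computation is a counting argument: I would show $p^*\colon {\mathbb Z}\to{\mathbb Z}$ is an isomorphism (not merely injective) by noting that $p^*$ is injective with the two groups both free of rank one, and that the quotient ${\rm coker}(p^*)$ injects into the finite group ${\rm Hom}(Z_G,{\mathbb G}_m)$ yet must also be free (being a quotient of ${\mathbb Z}$ by a nonzero subgroup only if the index is finite) — reconciling freeness with finiteness forces ${\rm coker}(p^*)=0$, so $p^*$ is surjective, ${\rm wt}=0$, and $\psi_*$ is the desired isomorphism ${\rm Hom}(Z_G,{\mathbb G}_m)\xrightarrow{\sim}{\rm Br}(M_C(G)^{\rm rs})$. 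Either route yields ${\rm Br}(M_C(G)^{\rm rs})\cong Z_G^\vee$, completing the proof.
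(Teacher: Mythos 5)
Your reduction of the corollary to the vanishing of the weight map is the right framework (it is exactly the specialization of Theorem \ref{gerbe-sc} to $s=1$), but neither of your two routes actually establishes ${\rm wt}=0$, and the second route is fallacious. The cokernel of an injective homomorphism ${\mathbb Z}\to{\mathbb Z}$ is ${\mathbb Z}/m{\mathbb Z}$ where the map is multiplication by $m$; this is a finite group, not a free one, and there is no tension at all between being such a quotient and injecting into the finite group ${\rm Hom}(Z_G,{\mathbb G}_m)$ --- such an injection exists precisely when $m$ divides the exponent of $Z_G$. So the claim that ``reconciling freeness with finiteness forces ${\rm coker}(p^*)=0$'' is simply false. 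Moreover, no purely formal manipulation of the exact sequence can ever prove ${\rm wt}=0$: the twisted analogue (Theorem \ref{exact-gerbe}) produces an exact sequence of exactly the same shape $0\to{\mathbb Z}\to{\mathbb Z}\to{\rm Hom}(Z_G,{\mathbb G}_m)\to{\rm Br}\to 0$, and there the answer is different; for ${\rm Sp}_{2n}$ with $d=1$ and $n$ odd, $p^*$ is multiplication by $2$, the weight map surjects onto ${\rm Hom}(Z_G,{\mathbb G}_m)\cong{\mathbb Z}/2{\mathbb Z}$, and ${\rm Br}(M^{1}_C({\rm Sp}_{2n})^{\rm rs})=0$ (Proposition \ref{SP}(1)). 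Whether ${\rm wt}$ vanishes is genuine geometric information about the untwisted case, which your argument never supplies.

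Your first route asserts the correct fact --- that in the untwisted, simply connected case the Picard generator of the stack has trivial weight, i.e.\ descends to the coarse space --- but you explicitly leave it unproven (you call it ``the main obstacle''), and your sketch of it misidentifies the generator: the ample generator of ${\rm Pic}({\mathcal M}_C(G))$ is a determinant-of-cohomology (theta) bundle only for groups such as ${\rm SL}_n$ and ${\rm Sp}_{2n}$; for ${\rm Spin}_n$ it is the Pfaffian bundle, whose \emph{square} is the determinant bundle, and for $E_6$, $E_7$ it is an even higher root, so computing central weights of theta bundles (which are indeed trivial in degree zero) does not determine the weight of the generator. The paper closes precisely this gap by quoting Proposition \ref{weight} (\cite[Proposition 7.4]{BH}): the kernel of $p^*\colon {\rm Br}(M_C(G)^{\rm rs})\to{\rm Br}({\mathcal M}_C(G)^{\rm rs})$ equals ${\rm Coker}({\rm ev}_G^{\delta})$, and for simply connected $G$ one has $\pi_1(G)=0$, hence $\delta=0$, so ${\rm ev}^0_G$ is the zero map and its cokernel is all of $Z_G^{\vee}$; since ${\rm Br}({\mathcal M}_C(G)^{\rm rs})=0$ (Theorem \ref{thm0} together with Proposition \ref{bis-hoff}(1), which is what makes $\psi_*$ in Theorem \ref{gerbe-sc} surjective), the map $p^*$ is zero and ${\rm Br}(M_C(G)^{\rm rs})=\ker(p^*)=Z_G^{\vee}$. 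Until you either invoke such a result or carry out the weight computation for the actual Picard generator, your proof has a genuine gap at its central step.
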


\section{A direct approach}

In this section we pursue the earlier spectral sequence argument.

Let ${\widetilde G}\longrightarrow G$ be the universal cover,
where $G$ is semisimple. The
kernel of this homomorphism is identified with
$\pi_1(G)$. Define the finite group
$$
\Gamma \,:=\, H^1(C,\, \pi_1(G))\, .
$$

\begin{lemma} \label{LCGnsc}
There is a short exact sequence 
$$ 
0\longrightarrow L_C({\widetilde G})/ \pi_1(G) \longrightarrow
L_C(G)\longrightarrow \Gamma \longrightarrow 0 \, .
$$

The quotient space $L_C({\widetilde G})/\pi_1(G)$ has the
homotopy type of $G\times \Omega({\widetilde G})^{2g-1}$
\end{lemma}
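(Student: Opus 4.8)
The plan is to read off the short exact sequence from the long exact sequence of non-abelian \'etale cohomology attached to the central extension
$$
1\,\longrightarrow\, \pi_1(G)\,\longrightarrow\, \widetilde G\,\longrightarrow\, G\,\longrightarrow\, 1
$$
over the smooth affine curve $U:=C\setminus\{p_0\}$. Evaluating on $\mathcal O(U)$ yields
$$
1\longrightarrow \pi_1(G)(U)\longrightarrow \widetilde G(U)\longrightarrow G(U)\stackrel{\delta}{\longrightarrow} H^1_{\text{\'et}}(U,\pi_1(G))\longrightarrow H^1_{\text{\'et}}(U,\widetilde G)\, ,
$$
where $\widetilde G(U)=L_C(\widetilde G)$ and $G(U)=L_C(G)$ by \eqref{llc}. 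Since $\pi_1(G)$ is finite, hence a constant group scheme over $\mathbb C$, and $U$ is connected, $\pi_1(G)(U)=\pi_1(G)$; thus the kernel of $L_C(\widetilde G)\to L_C(G)$ is the constant central subgroup $\pi_1(G)$, and exactness identifies $\ker\delta$ with the image $L_C(\widetilde G)/\pi_1(G)$. The Artin comparison identifies $H^1_{\text{\'et}}(U,\pi_1(G))$ with the Betti group $H^1(U,\pi_1(G))$, and as $U\simeq\bigvee_{i=1}^{2g}S^1$ the restriction $H^1(C,\pi_1(G))\to H^1(U,\pi_1(G))$ is an isomorphism (both groups being $\pi_1(G)^{2g}$), identifying the target of $\delta$ with $\Gamma$.

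It remains to show that $\delta$ is surjective, and this is the step I expect to be the main obstacle. By exactness the image of $\delta$ is the kernel of $H^1_{\text{\'et}}(U,\pi_1(G))\to H^1_{\text{\'et}}(U,\widetilde G)$, so surjectivity is equivalent to the vanishing $H^1_{\text{\'et}}(U,\widetilde G)=\ast$, i.e. to the triviality of every $\widetilde G$-torsor on the affine curve $U$. This is the one genuinely substantive input: it follows from Harder's vanishing over the function field together with Grothendieck--Serre injectivity for the regular scheme $U$ (equivalently, from the triviality of generically trivial torsors under a simply connected group on a smooth affine curve over $\mathbb C$). Granting it, the map $H^1(U,\pi_1(G))\to H^1(U,\widetilde G)$ is zero and $\delta$ is onto $\Gamma$, closing the sequence. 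I note that surjectivity can alternatively be seen topologically, since every class in $\Gamma$ is realized by a continuous map $U\to G$ and Teleman's homotopy equivalence transports it to an algebraic representative in $L_C(G)$.

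For the homotopy statement I would argue directly from Teleman. Applying Theorem \ref{thm-T} to the simply connected group $\widetilde G$ gives $L_C(\widetilde G)\simeq \widetilde G\times(\Omega\widetilde G)^{2g}$, realized by $f\mapsto\bigl(f(x_0),\,f(x_0)^{-1}f|_{c_1},\dots,f(x_0)^{-1}f|_{c_{2g}}\bigr)$ for a wedge point $x_0$ and circles $c_1,\dots,c_{2g}$ of $U\simeq\bigvee_i S^1$. This equivalence is equivariant for the action of the constant central subgroup $\pi_1(G)$: for $z\in\pi_1(G)$ one computes $zf\mapsto\bigl(z\,f(x_0),\,f(x_0)^{-1}f|_{c_i}\bigr)$, so $z$ acts by left translation on the evaluation factor $\widetilde G$ and trivially on each based-loop factor. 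Since $\pi_1(G)$ acts freely, passing to quotients preserves the homotopy equivalence, and $\widetilde G/\pi_1(G)=G$ yields
$$
L_C(\widetilde G)/\pi_1(G)\,\simeq\, G\times(\Omega\widetilde G)^{2g}\, ,
$$
the loop factor being inherited unchanged from Teleman's count for $\widetilde G$. As a consistency check, the short exact sequence exhibits $L_C(\widetilde G)/\pi_1(G)=\ker\delta$ as the identity component of $L_C(G)$; since the mapping space to which $L_C(G)$ is homotopy equivalent is $G\times(\Omega G)^{2g}$, its identity component is $G\times(\Omega_0 G)^{2g}$, and $\Omega_0 G\simeq\Omega\widetilde G$, in agreement with the direct computation.
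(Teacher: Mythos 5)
Your proposal is correct and follows essentially the same route as the paper: the short exact sequence is read off from the long exact cohomology sequence of $1 \to \pi_1(G) \to \widetilde G \to G \to 1$ over $C-p_0$, with Harder's vanishing \cite{Ha} for the simply connected group $\widetilde G$ over the affine curve supplying the surjectivity onto $\Gamma$ (the step you correctly single out as the substantive input), together with the identification $H^1(C-p_0,\pi_1(G))=H^1(C,\pi_1(G))$; and the homotopy statement is deduced from Theorem \ref{thm-T} applied to $L_C(\widetilde G)$. The paper compresses all of this into a few lines, whereas you spell out the equivariance of Teleman's equivalence under the central $\pi_1(G)$--action and the freeness of that action, which is the content implicitly invoked.

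One discrepancy is worth flagging: your computation yields $L_C(\widetilde G)/\pi_1(G)\simeq G\times(\Omega\widetilde G)^{2g}$, with $2g$ loop factors, while the lemma as stated (and the paper's one-line proof, and again Lemma \ref{L-pi}) has $2g-1$. Your count is the correct one: $C\setminus\{p_0\}$ retracts onto a wedge of $2g$ circles, so Theorem \ref{thm-T} --- which the paper's own proof cites --- gives $L_C(\widetilde G)\simeq \widetilde G\times(\Omega\widetilde G)^{2g}$, and since the central $\pi_1(G)$ acts trivially on the based-loop factors (exactly as you verify), no factor is lost in the quotient. The exponent $2g-1$ is an internal typo of the paper, not an error in your argument, and it is harmless downstream: each factor $B(\Omega\widetilde G)$ has vanishing $\pi_1$ and $\pi_2$, so it contributes nothing to the groups $H^1(\cdot,{\mathbb C}^*)$ and $H^2(\cdot,{\mathbb C}^*)$ computed in Lemma \ref{L-pi}, and the conclusion $H^2(B(L_C(\widetilde G)/\pi_1(G)),{\mathbb C}^*)=\pi_1(G)^{\vee}$ is unaffected.
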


\begin{proof}
Consider the short exact sequence of groups
$$
e\, \longrightarrow\, \pi_1(G) \, \longrightarrow\,
{\widetilde G}\, \longrightarrow\, G \, \longrightarrow\, e\, .
$$
We have $H^1(C-p_0,\, G)\,=\, 0$ \cite{Ha}, also $H^1(C-p_0,
\, \pi_1(G))\,=\, H^1(C,\, \pi_1(G))$. Therefore, from the
long exact sequence of cohomologies associated to it, the
exact sequence in the lemma is obtained.

The statement on homotopy type of $L_C({\widetilde G})/\pi_1(G)$
is a consequence of the fact that
$L_C({\widetilde G})$ itself has the homotopy type of ${\widetilde
G}\times\Omega({\widetilde G})^{2g-1}$ (see Theorem \ref{thm-T}).
\end{proof}

\begin{lemma}\label{L-pi}
We have
\begin{enumerate}
\item $H^1(B(L_C({\widetilde G})/ \pi_1(G)),\,{\mathbb C}^*)\,=\,0$, and

\item $H^2(B(L_C({\widetilde G})/ \pi_1(G)),\, {\mathbb C}^*)
\,=\,\pi_1(G)^{\vee}$.
\end{enumerate}
\end{lemma}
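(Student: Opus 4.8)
The plan is to follow the template of the proof of Proposition \ref{LCG}: compute the low--degree homotopy groups of the classifying space, then pass to integral homology via Hurewicz and to the desired cohomology via the universal coefficient theorem. Write $H\,:=\,L_C({\widetilde G})/\pi_1(G)$ for brevity. By Lemma \ref{LCGnsc}, $H$ has the homotopy type of $G\times \Omega({\widetilde G})^{2g-1}$, and since $\pi_n(BH)\,=\,\pi_{n-1}(H)$ for a topological group, the whole computation reduces to determining $\pi_0(H)$ and $\pi_1(H)$.

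First I would compute $\pi_0(H)$. Since $G$ is connected and $\pi_0(\Omega{\widetilde G})\,=\,\pi_1({\widetilde G})\,=\,0$ (recall that ${\widetilde G}$ is the universal cover, hence simply connected), the product decomposition gives $\pi_0(H)\,=\,0$. Consequently $\pi_1(BH)\,=\,\pi_0(H)\,=\,0$, so $BH$ is simply connected. This already yields statement (1): by the universal coefficient theorem, $H^1(BH,\,{\mathbb C}^*)\,=\,\mathrm{Hom}(H_1(BH,\,{\mathbb Z}),\,{\mathbb C}^*)$, and $H_1(BH,\,{\mathbb Z})\,=\,0$ because $BH$ is simply connected.

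Next I would compute $\pi_1(H)$. From the homotopy type, $\pi_1(H)\,=\,\pi_1(G)\oplus\pi_1(\Omega{\widetilde G})^{2g-1}$. The loop--space factors contribute $\pi_1(\Omega{\widetilde G})\,=\,\pi_2({\widetilde G})$, which vanishes because $\pi_2$ of any Lie group is trivial. Hence $\pi_1(H)\,=\,\pi_1(G)$, and therefore $\pi_2(BH)\,=\,\pi_1(H)\,=\,\pi_1(G)$. As $BH$ is simply connected, Hurewicz's theorem gives $H_2(BH,\,{\mathbb Z})\,=\,\pi_2(BH)\,=\,\pi_1(G)$.

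Finally, for statement (2) I would apply the universal coefficient theorem in degree two: since $H_1(BH,\,{\mathbb Z})\,=\,0$ kills the $\mathrm{Ext}$ term, $H^2(BH,\,{\mathbb C}^*)\,=\,\mathrm{Hom}(H_2(BH,\,{\mathbb Z}),\,{\mathbb C}^*)\,=\,\mathrm{Hom}(\pi_1(G),\,{\mathbb C}^*)\,=\,\pi_1(G)^{\vee}$, using that $\pi_1(G)$ is finite, so its group of characters into ${\mathbb C}^*$ is its Pontryagin dual. The one delicate point is the identification $\pi_1(H)\,=\,\pi_1(G)$: one must be certain the loop factors drop out, and this rests entirely on the vanishing $\pi_2({\widetilde G})\,=\,0$. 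Once the homotopy type supplied by Lemma \ref{LCGnsc} is in hand, everything else is formal.
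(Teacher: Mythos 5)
Your proposal is correct and takes essentially the same approach as the paper: both arguments rest on the homotopy type of $L_C({\widetilde G})/\pi_1(G)$ supplied by Lemma \ref{LCGnsc}, the vanishing of $\pi_1({\widetilde G})$ and $\pi_2({\widetilde G})$, and the passage from homotopy groups to $H^*(\,\cdot\,,{\mathbb C}^*)$ via Hurewicz and universal coefficients. The only cosmetic difference is that the paper applies the K\"unneth decomposition to $B(L_C({\widetilde G})/\pi_1(G))\,=\,BG\times B(\Omega{\widetilde G})^{2g-1}$ and treats each factor separately, whereas you compute the homotopy groups of the product directly and apply Hurewicz once to the whole classifying space; the key inputs are identical.
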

\begin{proof}
Since $L_C({\widetilde G})$ is connected (see Lemma \ref{lem00}),
$$
\pi_1(B(L_C({\widetilde G})/ \pi_1(G)))\,=\,
\pi_0(L_C({\widetilde G})/ \pi_1(G))\,=0\, .
$$
Hence the first statement follows.

To prove the second statement, consider the product decomposition
$$
B(L_C({\widetilde G})/ \pi_1(G)) \,=\,BG\times B(\Omega {\widetilde 
G})^{2g-1}\, ,
$$
and apply the K\"unneth decomposition to it.
The individual cohomology computations are done as follows.
We have $H^1(BG,\,{\mathbb C}^*)\,=\,0$ and 
$H^1(B(\Omega {\widetilde G}),\,{\mathbb C}^*)\,=\,0$
because $\pi_1(BG)\,=\,\pi_0(G)\,=\,0$; also, 
$$
\pi_1(B(\Omega {\widetilde G}))\,=\,\pi_0(\Omega {\widetilde G})
\,=\,\pi_1({\widetilde G})\,=\,0 \, .
$$

These and Hurewicz isomorphism together imply that 
$$
H^2(B(\Omega {\widetilde G}),\,{\mathbb C}^*)\,\cong\,
{\rm Hom}(\pi_2(B(\Omega {\widetilde G})),\,{\mathbb C}^*)
~\, \text{~and~}\,~ 
H^2(BG,\,{\mathbb C}^*)\,\cong\,
{\rm Hom}(\pi_2(BG),\,{\mathbb C}^*)\, .
$$
Since $\pi_2(B(\Omega {\widetilde G}))\,=\,
\pi_1(\Omega {\widetilde G})\,=\,\pi_2({\widetilde G})\,=\,0$, and 
$\pi_2(BG)\,=\,\pi_1(G)$, the second statement in the
proposition follows.
\end{proof}

The following is a generalization of Proposition \ref{LCG}.

\begin{proposition}\label{LCG-2}
With the above notation and $\Gamma \,:=\, H^1(C,\, \pi_1(G))$,
\begin{enumerate}
\item $H^1(BL_C(G),\,{\mathbb Z})\,=\,0$, and
\item there is a short exact sequence
\begin{equation}\label{BLC} 
0 \, \longrightarrow \, H^2(\Gamma,\, {\mathbb C}^*)\,\longrightarrow\,
H^2(BL_C(G),\,{\mathbb C}^*))\,\longrightarrow\,
\pi_1(G)^{\vee}\,\longrightarrow\,0\, . 
\end{equation} 
\end{enumerate}
\end{proposition}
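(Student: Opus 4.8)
The plan is to read off part (1) from the component structure of $L_C(G)$ and to obtain part (2) from the Serre spectral sequence of a fibration whose fiber cohomology is exactly what Lemma \ref{L-pi} provides.

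For part (1) I would argue as in the proof of Proposition \ref{LCG}: the group $H^1(BL_C(G),\,{\mathbb Z})$ parametrizes continuous homomorphisms $L_C(G)\to{\mathbb Z}$. Since the kernel $L_C(\widetilde G)/\pi_1(G)$ in Lemma \ref{LCGnsc} is connected (its homotopy type $G\times\Omega(\widetilde G)^{2g-1}$ has connected factors), the exact sequence of that lemma gives $\pi_0(L_C(G))\,=\,\Gamma$. Any homomorphism to ${\mathbb Z}$ kills the identity component and so factors through $\Gamma$, which is finite; hence $H^1(BL_C(G),\,{\mathbb Z})\,=\,{\rm Hom}(\Gamma,{\mathbb Z})\,=\,0$.

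For part (2), write $K:=L_C(\widetilde G)/\pi_1(G)$. The group extension of Lemma \ref{LCGnsc} yields a fibration $BK\to BL_C(G)\to B\Gamma$, and I would run its Serre spectral sequence with coefficients in ${\mathbb C}^*$,
$$
E_2^{p,q}\,=\,H^p\bigl(\Gamma,\,H^q(BK,\,{\mathbb C}^*)\bigr)\,\Longrightarrow\,H^{p+q}(BL_C(G),\,{\mathbb C}^*)\, .
$$
By Lemma \ref{L-pi} the fiber groups are $H^0(BK,{\mathbb C}^*)={\mathbb C}^*$ with trivial $\Gamma$-action, $H^1(BK,{\mathbb C}^*)=0$, and $H^2(BK,{\mathbb C}^*)=\pi_1(G)^{\vee}$. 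The vanishing of $H^1$ annihilates the entire row $q=1$, so $E_2^{p,1}=0$ for all $p$; in particular $E_\infty^{1,1}=0$, and no differential enters or leaves $E_2^{2,0}=H^2(\Gamma,{\mathbb C}^*)$, so $E_\infty^{2,0}=H^2(\Gamma,{\mathbb C}^*)$. The edge filtration on $H^2$ therefore collapses to
$$
0\,\longrightarrow\,H^2(\Gamma,\,{\mathbb C}^*)\,\longrightarrow\,H^2(BL_C(G),\,{\mathbb C}^*)\,\longrightarrow\,E_\infty^{0,2}\,\longrightarrow\,0\, ,
$$
where $E_\infty^{0,2}\subseteq E_2^{0,2}=\bigl(\pi_1(G)^{\vee}\bigr)^{\Gamma}\subseteq\pi_1(G)^{\vee}$ is the image of the edge map $H^2(BL_C(G),{\mathbb C}^*)\to H^2(BK,{\mathbb C}^*)$. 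Everything thus reduces to showing this edge map is surjective onto all of $\pi_1(G)^{\vee}$.

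This surjectivity is the main obstacle, and I would settle it by splitting the edge map through an evaluation homomorphism. Fix $x\in C\setminus\{p_0\}$; evaluation of regular maps at $x$ gives a group homomorphism ${\rm ev}_x\,:\,L_C(G)=G({\mathcal O}(C-p_0))\to G$, and similarly for $\widetilde G$. For the inclusion $i\,:\,K\hookrightarrow L_C(G)$, the composite ${\rm ev}_x\circ i$ sends the class of $\widetilde f\in\widetilde G({\mathcal O}(C-p_0))$ to the image of $\widetilde f(x)$ under $\widetilde G\to G$. Since $L_C(\widetilde G)$ is connected and simply connected (Theorem \ref{thm-T}), it is the universal cover of $K$ with deck group $\pi_1(G)=\pi_1(K)$; a loop in $K$ representing $\zeta\in\pi_1(K)$ lifts to a path in $L_C(\widetilde G)$ from $e$ to the central element $\zeta$, which ${\rm ev}_x$ carries to a path in $\widetilde G$ from $e$ to $\zeta$, and whose projection to $G$ is a loop representing $\zeta$ under the identification $\pi_1(G)=\ker(\widetilde G\to G)$. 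Hence $({\rm ev}_x\circ i)_*\,:\,\pi_1(K)\to\pi_1(G)$ is an isomorphism. Using $H^2(B(-),{\mathbb C}^*)\cong\pi_1(-)^{\vee}$ functorially for connected groups, the composite
$$
\pi_1(G)^{\vee}=H^2(BG,{\mathbb C}^*)\xrightarrow{\ {\rm ev}_x^*\ }H^2(BL_C(G),{\mathbb C}^*)\xrightarrow{\ i^*\ }H^2(BK,{\mathbb C}^*)=\pi_1(G)^{\vee}
$$
is an isomorphism, so the edge map $i^*$ is surjective. This forces $E_\infty^{0,2}=E_2^{0,2}=\pi_1(G)^{\vee}$ (and incidentally shows the $\Gamma$-action on $\pi_1(G)^{\vee}$ is trivial), and plugging this into the collapsed filtration gives the exact sequence \eqref{BLC}. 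The step deserving genuine care is precisely this last one—pinning down the map on fundamental groups induced by ${\rm ev}_x\circ i$ and verifying that the evaluation homomorphism really splits the edge map; the remainder is a formal consequence of Lemma \ref{L-pi} and the multiplicativity of the spectral sequence.
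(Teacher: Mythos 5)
Your proposal is correct, and its overall skeleton matches the paper's own proof: part (1) is argued the same way (finiteness of $\pi_0(L_C(G))\,=\,\Gamma$), and for part (2) the paper runs exactly the spectral sequence you describe, for $B(L_C(\widetilde G)/\pi_1(G))\,\longrightarrow\, BL_C(G)$ viewed as a covering with group $\Gamma$, kills the $q=1$ row by Lemma \ref{L-pi}(1), and reduces everything to the surjectivity of the restriction map onto $H^2(B(L_C(\widetilde G)/\pi_1(G)),\,{\mathbb C}^*)\,=\,\pi_1(G)^{\vee}$. Where you genuinely diverge is in how that surjectivity is established. The paper pulls back the central extension $\widetilde G\to G$ along evaluation at $x_0$ to form the group scheme ${\mathcal H}\,=\,L_C(G)\times_G\widetilde G$, interprets $B(L_C(\widetilde G))\to B(L_C(\widetilde G)/\pi_1(G))$ and $B({\mathcal H})\to B(L_C(G))$ as gerbes banded by $\pi_1(G)$, checks that the first gerbe is the pullback of the second under the covering $c$, and then uses Lemma \ref{L-pi}(2) to say that every class in $\pi_1(G)^{\vee}$ arises from this descended gerbe via a character. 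You instead show that the composite $K\hookrightarrow L_C(G)\stackrel{{\rm ev}_x}{\longrightarrow} G$ induces an isomorphism on $\pi_1$ (your loop-lifting computation is correct, since $L_C(\widetilde G)$ is connected and simply connected by Theorem \ref{thm-T} and hence is the universal cover of $K$), and then invoke the naturality of $H^2(B(\,\cdot\,),{\mathbb C}^*)\cong \pi_1(\,\cdot\,)^{\vee}$ for connected groups to conclude that the edge map is split surjective. The underlying idea is the same --- both arguments hinge on the evaluation homomorphism at a second point of $C\setminus\{p_0\}$, and your $\pi_1$-computation does precisely the work of the paper's verification that $d$ identifies $B(L_C(\widetilde G))$ with $c^*B({\mathcal H})$ --- but your implementation is more elementary (no gerbes and no auxiliary group scheme ${\mathcal H}$), and it yields slightly more: the section ${\rm ev}_x^*$ shows that the sequence \eqref{BLC} actually splits, information the paper only extracts later when it needs splittings in Section \ref{sec7}.
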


\begin{proof}
The first part is a consequence of the fact that 
$$
\pi_1(BL_C(G))\,=\,\pi_0(L_C(G))\,=\,\Gamma
$$ 
is a finite group.

For the second part, we use Lemma \ref{LCGnsc} to realize the 
space 
$B(L_C({\widetilde G})/\pi_1(G))$ as a principal $\Gamma$--bundle
over $BL_C(G)$. More precisely,
$$
B(L_C({\widetilde G})/\pi_1(G))
\,=\, (EL_C(G))/(L_C({\widetilde G})/\pi_1(G))\,\longrightarrow
\, EL_C(G)/L_C(G)\,=\, BL_C(G)\, .
$$
The Serre spectral sequence gives the following exact sequence 
$$
H^0(\Gamma,\, H^1(B(L_C({\widetilde G})/\pi_1(G)),{\mathbb C}^*) ) 
\,\longrightarrow\,H^2(\Gamma, \,{\mathbb C}^*)\,\longrightarrow
\,{\rm kernel}[H^2(BL_C(G), {\mathbb C}^*)\rightarrow
$$
$$
H^0(\Gamma,\, H^2(B(L_C({\widetilde G})/\pi_1(G)),{\mathbb C}^*))]
\,\longrightarrow\, H^1(\Gamma,\,
H^1(B(L_C({\widetilde G})/\pi_1(G)),{\mathbb C}^*))\, .
$$
As $H^1(B(L_C({\widetilde G})/\pi_1(G)),{\mathbb C}^*)\,=\,0$
(see Lemma \ref{L-pi}(1)), this exact sequence reduces to
an isomorphism
\begin{equation}\label{sr}
H^2(\Gamma,\, {\mathbb C}^*)\,\stackrel{\sim}{\longrightarrow}\,
{\rm kernel}[H^2(BL_C(G),\, {\mathbb C}^*)\rightarrow
H^0(\Gamma, H^2(B(L_C({\widetilde G})/\pi_1(G)),\,{\mathbb C}^*))]\, .
\end{equation}
We have $H^0(\Gamma,\, H^2(B(L_C({\widetilde G})/\pi_1(G)),{\mathbb 
C}^*))\,=\, H^2(B(L_C({\widetilde G})/\pi_1(G)),{\mathbb C}^*)
\,=\, \pi_1(G)^{\vee}$ by Lemma \ref{L-pi}(2). Therefore,
to complete the proof it suffices to show that the homomorphism
\begin{equation}\label{y1}
H^2(BL_C(G), {\mathbb C}^*)\,\longrightarrow\,
H^0(\Gamma,\, H^2(B(L_C({\widetilde G})/\pi_1(G)),{\mathbb C}^*))
\,=\, \pi_1(G)^{\vee}
\end{equation}
in \eqref{sr} is surjective.

Fix a point $x_0\, \in \, C\setminus\{p_0\}$, where $p_0$
is the base point in \eqref{llc}. Construct the group scheme
$$
{\mathcal H}\, :=\, L_C(G)\times_G {\widetilde G}
$$
using the evaluation $L_C(G)\, \longrightarrow\, G$ at $x_0$. Both
the natural morphisms
$$
B(L_C({\widetilde G}))\, \stackrel{a}{\longrightarrow}\, 
B(L_C({\widetilde 
G})/\pi_1(G)) \, ~\, \text{~and~}\, ~\,
B({\mathcal H})\, \stackrel{b}{\longrightarrow}\,B(L_C(G))
$$
are gerbes banded by $\pi_1(G)$, while the natural morphism
$B(L_C({\widetilde G})/\pi_1(G))\, 
\stackrel{c}{\longrightarrow}\,B(L_C(G))$ is an \'etale Galois
covering with Galois group $\Gamma$. Note that we have a
homomorphism
$$
L_C({\widetilde G}) \, \longrightarrow\,
{\mathcal H}\,=\, L_C(G)\times_G {\widetilde G}
$$
constructed using the evaluation $L_C({\widetilde G})\,\longrightarrow
\, {\widetilde G}$ at $x_0$. This homomorphism produces a morphism
$$
d\, :\, B(L_C({\widetilde G}))\, \longrightarrow\,B({\mathcal H})\, ,
$$
and the diagram
$$
\begin{matrix}
B(L_C({\widetilde G})) &\stackrel{d}{\longrightarrow} &
B({\mathcal H})\\
~\Big\downarrow  a && ~\Big\downarrow b \\
B(L_C({\widetilde G})/\pi_1(G)) &\stackrel{c}{\longrightarrow} &
B(L_C(G))
\end{matrix}
$$
is clearly commutative. It is straight-forward to check that $d$ 
produces an isomorphism of the gerbe $B(L_C({\widetilde G}))$ with
the pulled back gerbe $c^*B({\mathcal H})$.

The gerbe $B(L_C({\widetilde G}))\, \stackrel{a}{\longrightarrow}
\,B(L_C({\widetilde G})/\pi_1(G))$ and a character of $\pi_1(G)$
together produce a gerbe on $B(L_C({\widetilde G})/\pi_1(G))$ 
banded by ${\mathbb C}^*$. In view of Lemma \ref{L-pi}(2), we
know that all elements of $H^2(B(L_C({\widetilde 
G})/\pi_1(G)),{\mathbb C}^*))$ arise this way. Since the gerbe
$B(L_C({\widetilde G}))\, \stackrel{a}{\longrightarrow}\,
B(L_C({\widetilde G})/\pi_1(G))$ is the pullback of a gerbe on
$B(L_C(G))$, it follows that the homomorphism in \eqref{y1}
is surjective.
\end{proof}

The analogue of Theorem \ref{thm0} for a general semisimple
group is the following.

\begin{theorem}\label{direct}
For any $\delta \in \pi_1(G)$, the following exact 
sequence computes the Brauer group of the moduli stack of
principal $G$-bundles over $C$:
$$
{\rm Pic}({\mathcal M}_C(G)^{\delta}) \, \longrightarrow\,
{\rm Pic}({\mathcal Q}_{\widetilde G})\, \longrightarrow\,
H^2(BL_C(G),\, {\mathbb C}^*)\, \longrightarrow\,
{\rm Br}({\mathcal M}_C(G)^{\delta})\,\longrightarrow\, 0\, ;
$$
the above group $H^2(BL_C(G),\, {\mathbb C}^*)$ is computed by
the exact sequence in (\ref{BLC}).
\end{theorem}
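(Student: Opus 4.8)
The plan is to run the descent spectral sequence argument of Theorem~\ref{thm0} essentially verbatim, replacing the total space ${\mathcal Q}_G$ by the ind--Grassmannian ${\mathcal Q}_{\widetilde G}$ of the universal cover while keeping $L_C(G)$ as the structure group; the one genuinely new feature is that $L_C(G)$ is no longer connected. By Proposition~\ref{unifG} the morphism $q_G^{\delta}$ of \eqref{qdg} realizes ${\mathcal Q}_{\widetilde G}$ as a principal $L_C(G)$--bundle over ${\mathcal M}_C(G)^{\delta}$ (the conjugation by $\zeta$ is irrelevant for the computation). First I would write the associated descent spectral sequence with coefficients in ${\mathcal O}^*$, exactly as in \eqref{Sp-S} and following \cite{Me}, whose low--degree terms give a seven--term exact sequence with $E_2^{p,q}=H^p(BL_C(G),\,H^q({\mathcal Q}_{\widetilde G},{\mathcal O}^*))$ abutting to $H^{p+q}({\mathcal M}_C(G)^{\delta},{\mathcal O}^*)$.

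Next I would identify the entries. From \cite{BLS} one has $H^1({\mathcal Q}_{\widetilde G},{\mathcal O}^*)={\rm Pic}({\mathcal Q}_{\widetilde G})\cong{\mathbb Z}^s$, so the edge map out of $E_2^{0,1}$ is the pullback ${\rm Pic}({\mathcal M}_C(G)^{\delta})\to{\rm Pic}({\mathcal Q}_{\widetilde G})$. The term $E_2^{0,2}=H^0(BL_C(G),\,H^2({\mathcal Q}_{\widetilde G},{\mathcal O}^*))$ vanishes because $H^2({\mathcal Q}_{\widetilde G},{\mathcal O}^*)=0$: indeed $H^3({\mathcal Q}_{\widetilde G},{\mathbb Z})=0$ was established inside the proof of Proposition~\ref{ind-gr}, the generalized Schubert varieties are rational so that $H^2({\mathcal Q}_{\widetilde G},{\mathcal O})=0$, and the exponential sequence then forces $H^2({\mathcal Q}_{\widetilde G},{\mathcal O}^*)=0$. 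Consequently the kernel appearing in the seven--term sequence is all of $H^2({\mathcal M}_C(G)^{\delta},{\mathcal O}^*)$, while $E_2^{2,0}=H^2(BL_C(G),{\mathbb C}^*)$ is the finite group computed by the exact sequence \eqref{BLC}.

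The crux is the vanishing of $E_2^{1,1}=H^1(BL_C(G),\,H^1({\mathcal Q}_{\widetilde G},{\mathcal O}^*))$, which is what makes the edge map $H^2(BL_C(G),{\mathbb C}^*)\to H^2({\mathcal M}_C(G)^{\delta},{\mathcal O}^*)$ surjective. In the situation of Theorem~\ref{thm0} the base $BL_C(G)$ was simply connected and this was automatic; here $\pi_1(BL_C(G))=\pi_0(L_C(G))=\Gamma$ is a nontrivial finite group, so ${\mathbb Z}^s\cong H^1({\mathcal Q}_{\widetilde G},{\mathcal O}^*)$ is a priori only a local system on $BL_C(G)$, and I must check that the monodromy $\Gamma$--action on it is trivial. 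I expect this to be the main obstacle. The argument I would give is that ${\rm Pic}({\mathcal Q}_{\widetilde G})$ is generated by the ample determinant (theta) line bundles attached to the almost simple factors, which carry a loop--group--equivariant structure up to the central extension; hence left translation by any element of $L_C(G)$ fixes their isomorphism classes, so the $\Gamma$--action is trivial. With the action trivial the sheaf is constant and $H^1(BL_C(G),{\mathbb Z}^s)=H^1(BL_C(G),{\mathbb Z})^{\oplus s}=0$ by Proposition~\ref{LCG-2}(1).

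Putting these vanishings into the seven--term sequence collapses it to
\[
{\rm Pic}({\mathcal M}_C(G)^{\delta})\longrightarrow{\rm Pic}({\mathcal Q}_{\widetilde G})\longrightarrow H^2(BL_C(G),{\mathbb C}^*)\longrightarrow H^2({\mathcal M}_C(G)^{\delta},{\mathcal O}^*)\longrightarrow 0 .
\]
Since $H^2(BL_C(G),{\mathbb C}^*)$ is finite, $H^2({\mathcal M}_C(G)^{\delta},{\mathcal O}^*)$ is finite, in particular torsion; verifying the two hypotheses of Proposition~\ref{key} for the stack ${\mathcal M}_C(G)^{\delta}$ exactly as in the proof of Theorem~\ref{thm0} (via \cite{Te}) identifies this last group with ${\rm Br}({\mathcal M}_C(G)^{\delta})$, yielding the asserted exact sequence.
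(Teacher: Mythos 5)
Your proposal is correct and follows essentially the same route as the paper: the paper's proof of Theorem \ref{direct} consists precisely of rerunning the descent spectral sequence argument of Theorem \ref{thm0} for the bundle ${\mathcal Q}_{\widetilde G}\,\longrightarrow\,{\mathcal M}_C(G)^{\delta}$ of Proposition \ref{unifG}, with Proposition \ref{LCG-2} now supplying the cohomology of $BL_C(G)$, exactly as you do. Your additional step --- verifying that the monodromy action of $\Gamma\,=\,\pi_0(L_C(G))$ on ${\rm Pic}({\mathcal Q}_{\widetilde G})\,\cong\,{\mathbb Z}^s$ is trivial so that $E_2^{1,1}$ vanishes --- is a point the paper leaves implicit, and your justification is sound (one can also argue it even more directly: each element of $\zeta^{-1}L_C(G)\zeta$ acts on ${\mathcal Q}_{\widetilde G}$ by left translation by an element of the \emph{connected} group $L(\widetilde G)/\pi_1(G)$, hence acts trivially on all cohomology of ${\mathcal Q}_{\widetilde G}$).
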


\begin{proof}
The proof of the theorem follows the same steps as in the
proof of Theorem \ref{thm0}, namely the descent spectral sequence
and the cohomology computations --- which are now provided
by Proposition \ref{LCG-2}.
\end{proof}

As the last step in the computation we prove the following.

\begin{proposition}\label{gerbe-final}
The homomorphism
$$
p^*\,:\, {\rm Br}(M_C(G)^{\delta,{\rm rs}}) 
\,\longrightarrow \,
{\rm Br}({\mathcal M}_C(G)^{\delta,{\rm rs}})\,=\,
{\rm Br}({\mathcal M}_C(G)^\delta)
$$
in \eqref{p2} is surjective.
\end{proposition}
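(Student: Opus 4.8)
The plan is to produce, for every Brauer class on $\mathcal{M}_C(G)^\delta$, an explicit preimage coming from a gerbe on the coarse moduli space $M_C(G)^{\delta,\mathrm{rs}}$. By Theorem \ref{direct} the group ${\rm Br}(\mathcal{M}_C(G)^\delta)$ is a quotient of $H^2(BL_C(G),\mathbb{C}^*)$, and the latter sits in the short exact sequence \eqref{BLC} with subgroup $H^2(\Gamma,\mathbb{C}^*)$ and quotient $\pi_1(G)^\vee$, where $\Gamma=H^1(C,\pi_1(G))$. I would therefore treat these two pieces separately and check that each is hit by $p^*$; since they together account for all of $H^2(BL_C(G),\mathbb{C}^*)$, and hence for all of the quotient ${\rm Br}(\mathcal{M}_C(G)^\delta)$, surjectivity of $p^*$ will follow.

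First I would handle the subgroup $H^2(\Gamma,\mathbb{C}^*)$. By Lemma \ref{lema}(2) the morphism $\gamma_1$ restricts to a principal $\Gamma$--bundle $M^\delta_C(\widetilde G)^{\rm rs}\to M_C(G)^{\delta,\rm rs}$ over the regularly stable locus, with classifying map $f\colon M_C(G)^{\delta,\rm rs}\to B\Gamma$. Pulling back along $f$ realizes every class of $H^2(\Gamma,\mathbb{C}^*)=H^2(B\Gamma,\mathbb{C}^*)$ as a Brauer class already defined on the coarse moduli space; applying $p^*$ then lands these in the $H^2(\Gamma,\mathbb{C}^*)$--part of ${\rm Br}(\mathcal{M}_C(G)^\delta)$. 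Concretely, this is the same $\Gamma$--Serre spectral sequence contribution that appears in the proof of Theorem \ref{th-m-s}, now pushed up through the gerbe $p$; one only has to observe that the identification $\Gamma=\pi_0(L_C(G))=\pi_1(BL_C(G))$ makes these ``covering'' classes correspond on the two sides.

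Next I would lift the quotient $\pi_1(G)^\vee$. Given a character $\chi\colon\pi_1(G)\to\mathbb{C}^*$, extend it to $\widetilde\chi\colon Z_{\widetilde G}\to\mathbb{C}^*$, which is possible since $\mathbb{C}^*$ is divisible. By Theorem \ref{exact-gerbe} the weight map $\psi_*$ for the simply connected group $\widetilde G$ is surjective, so $\widetilde\chi$ produces a class in ${\rm Br}(M^\delta_C(\widetilde G)^{\rm rs})$. The decisive input is the descent established inside the proof of Theorem \ref{th-m-s}: the $Z_{\widetilde G}$--gerbe over $M^\delta_C(\widetilde G)^{\rm rs}$ descends, after quotienting by $\Gamma$, to a gerbe over $M_C(G)^{\delta,\rm rs}$. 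Hence the class built from $\widetilde\chi$ already lives on the coarse space $M_C(G)^{\delta,\rm rs}$, and its image under $p^*$ gives the $\pi_1(G)^\vee$--component of ${\rm Br}(\mathcal{M}_C(G)^\delta)$, via the factorization $q^\delta_G=\gamma\circ\widehat q^\delta_{\widetilde G}$ of Lemma \ref{lema}(1).

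The hard part will be checking the compatibility of the three spectral sequences in play --- the descent sequence for the $L_C(G)$--uniformization underlying Theorem \ref{direct}, the $\Gamma$--Serre sequence of Theorem \ref{th-m-s}, and the gerbe Leray sequence of Lemma \ref{pic-bra} for $p$ --- so that the coarse classes manufactured above map to exactly the subgroup and the quotient of the extension \eqref{BLC}. Equivalently, one must verify that the image of $p^*$ contains the image of $H^2(\Gamma,\mathbb{C}^*)$ and surjects onto the image of $\pi_1(G)^\vee$ in ${\rm Br}(\mathcal{M}_C(G)^\delta)$; granting this, a short diagram chase on \eqref{BLC} forces $p^*$ to be onto. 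All remaining steps reduce to invoking results already proved, namely Theorem \ref{thm0-d}, Theorems \ref{exact-gerbe} and \ref{th-m-s}, and Lemma \ref{lema}.
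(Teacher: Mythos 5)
Your strategy---splitting ${\rm Br}({\mathcal M}_C(G)^\delta)$ according to the extension \eqref{BLC} and manufacturing coarse--space classes for each piece---is viable, and your two constructions do produce genuine classes in ${\rm Br}(M_C(G)^{\delta,{\rm rs}})$: pulling back $H^2(\Gamma,{\mathbb C}^*)$ along the classifying map of the $\Gamma$--covering of Lemma \ref{lema}(2), and pushing the gerbe that descends in the proof of Theorem \ref{th-m-s} by an extension $\widetilde\chi$ of $\chi\in\pi_1(G)^\vee$. But as written the proof is not complete, because the step you defer as ``the hard part'' is not bookkeeping; it is the entire content of the proposition. Concretely, for the $\pi_1(G)^\vee$--piece you must show that $p^*$ of your class $\widetilde\chi_*\bigl[{\mathcal M}^\delta_C(\widetilde G)'/\Gamma \to M_C(G)^{\delta,{\rm rs}}\bigr]$ agrees, modulo the image of ${\rm Pic}({\mathcal Q}_{\widetilde G})$ and of $H^2(\Gamma,{\mathbb C}^*)$, with the image under the descent edge map of Theorem \ref{direct} of a class in $H^2(BL_C(G),{\mathbb C}^*)$ lifting $\chi$. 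Unwinding the proof of Proposition \ref{LCG-2}, such lifts are exactly the classes $\chi_*[B{\mathcal H}]$, where ${\mathcal H}=L_C(G)\times_G\widetilde G$ is the central extension of $L_C(G)$ by $\pi_1(G)$ defined by evaluation at $x_0$. So what is actually needed is an identification of the pullback of the gerbe $B{\mathcal H}\to BL_C(G)$ to ${\mathcal M}_C(G)^{\delta,{\rm rs}}$ with your lifting gerbe ${\mathcal M}^\delta_C(\widetilde G)'/\Gamma$, as $\pi_1(G)$--banded gerbes. This is true---it follows from ${\mathcal H}/L_C(\widetilde G)\cong\Gamma$ together with the uniformization of the twisted stack (Proposition \ref{unifG0}), which gives $[{\mathcal Q}_{\widetilde G}/{\mathcal H}^{\zeta}]\cong {\mathcal M}^\delta_C(\widetilde G)/\Gamma$---but none of this appears in your argument, and without it the diagram chase on \eqref{BLC} cannot even begin. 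A similar (milder) verification is needed for the $H^2(\Gamma,{\mathbb C}^*)$--piece, namely that the map ${\mathcal M}_C(G)^{\delta}\to BL_C(G)\to B\Gamma$ and the composite ${\mathcal M}_C(G)^{\delta,{\rm rs}}\stackrel{p}{\to} M_C(G)^{\delta,{\rm rs}}\to B\Gamma$ classify the same $\Gamma$--covering, which again rests on Lemma \ref{LCGnsc} and the uniformization.

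It is worth contrasting this with the paper's proof, which avoids all generator--matching. The geometric input your proposal never uses is that the coarse space itself is uniformized: if ${\mathcal Q}_1\subset{\mathcal Q}_{\widetilde G}$ denotes the preimage of ${\mathcal M}_C(G)^{\delta,{\rm rs}}$ under $q^\delta_G$, then $L_C(G)/Z_G$ acts on ${\mathcal Q}_1$ with quotient $M_C(G)^{\delta,{\rm rs}}$ (regular stability makes $Z_G$ exactly the generic stabilizer). Running the descent spectral sequence for both quotients and using the identification ${\mathcal Q}_1\times_{M_C(G)^{\delta,{\rm rs}}}{\mathcal M}_C(G)^{\delta,{\rm rs}}\cong{\mathcal Q}_1\times Z_G$ yields a commutative square comparing $H^2(B(L_C(G)/Z_G),{\mathbb C}^*)\to{\rm Br}(M_C(G)^{\delta,{\rm rs}})$ with $H^2(BL_C(G),{\mathbb C}^*)\to{\rm Br}({\mathcal M}_C(G)^{\delta,{\rm rs}})$. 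Since the top arrow is onto by Theorem \ref{direct}, everything reduces to surjectivity of $H^2(B(L_C(G)/Z_G),{\mathbb C}^*)\to H^2(BL_C(G),{\mathbb C}^*)$, which follows by comparing the two exact sequences of Proposition \ref{LCG-2} type: the subgroup $H^2(\Gamma,{\mathbb C}^*)$ is common to both, and the quotient terms surject as restriction of characters $Z^{\vee}_{\widetilde G}\to\pi_1(G)^\vee$. In effect, the paper carries out your two--piece analysis once and for all at the level of classifying spaces, where it is pure group cohomology, instead of on Brauer groups, where each identification requires a separate geometric argument. If you want to salvage your route, the missing gerbe identification above is the precise statement you would have to prove.
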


\begin{proof}
The above equality ${\rm Br}({\mathcal M}_C(G)^{\delta,{\rm rs}})
\,=\, {\rm Br}({\mathcal M}_C(G)^\delta)$ follows
immediately from Proposition \ref{bis-hoff}.

Let ${\widetilde G}\,\longrightarrow\, G$ be
the universal covering homomorphism; its kernel is
$\pi_1(G)$. We get 
an inclusion $\pi_1(G)\,\subset\,
Z_{{\widetilde G}}$, and the quotient is identified
with $Z_G$. There is a canonical inclusion of $Z_G$
$$
Z_G\,\hookrightarrow\, L_C({\widetilde G})/\pi_1(G)
\,\hookrightarrow\, L_C(G)
$$
(see Lemma \ref{LCGnsc}).

Let
$$
{\mathcal Q}_1\subset {\mathcal Q}_{\widetilde G}
$$
be the open sub--stack
defined by the inverse image of ${\mathcal M}_C(G)^{\delta,{\rm rs}}$ 
under the morphism $q^\delta_G$ in \eqref{qdg}. Choosing an element
$\zeta \in (LG)^{\delta}({\mathbb C})$
as in Proposition \ref{unifG},
we get an action of the group $L_C(G)/Z_G$ on ${\mathcal Q}_1$ 
such that the corresponding quotient is $M_C(G)^{\delta,{\rm 
rs}}$. 

Consider the morphism $p$ in \eqref{p}. Since the fiber product
${\mathcal Q}_1\times_{M_C(G)^{\delta,{\rm rs}}}
{\mathcal M}_C(G)^{\delta, {\rm rs}}$
is identified with ${\mathcal Q}_1 \times Z(G)$,
we conclude, by taking cohomologies, that the diagram
$$
\begin{matrix}
 H^2(BL_C(G),\,{\mathbb C}^*)&\longrightarrow & 
{\rm Br}({\mathcal M}_C(G)^{\delta,{\rm rs}}) \\
 \Big\uparrow & & \Big\uparrow \\
H^2(B(L_C(G)/Z(G)),\,{\mathbb C}^*)&\longrightarrow & 
{\rm Br}(M_C(G)^{\delta,{\rm rs}})
\end{matrix}
$$
is commutative. The upper horizontal arrow is surjective by Theorem
\ref{direct}. In view of the above diagram, to complete the
proof of the proposition it suffices to show that the homomorphism
$$
H^2(B(L_C(G)/Z(G)),\,{\mathbb C}^*)\,\longrightarrow\,
H^2(BL_C(G),\,{\mathbb C}^*)
$$ 
is surjective. But this surjectivity follows by applying Proposition
\ref{LCG-2}(2) to $BL_C(G)$ and $B(L_C(G)/Z(G))$ and observing 
that the terms in the resulting two exact
sequences match. This completes the proof of the proposition.
\end{proof}

Combining the above proposition with Proposition \ref{weight} and
Lemma \ref{pic-bra} we get the following.

\begin{corollary}\label{corl}
For any semisimple $G$, there is a short exact sequence
$$
0\,\longrightarrow \,{\rm Coker}({\rm 
ev}_G^{\delta})\,\longrightarrow\,
{\rm Br}(M_C(G)^{\delta,{\rm rs}}) \,\longrightarrow \,{\rm 
Br}({\mathcal M}_C(G)^\delta)\,\longrightarrow \,0\, .
$$
\end{corollary}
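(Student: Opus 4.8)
The plan is to assemble the short exact sequence from three ingredients that are already in place: the exact sequence of Lemma \ref{pic-bra} applied to the $Z_G$-gerbe $p$ in \eqref{p}, the identification of the kernel of $p^*$ from Proposition \ref{weight}, and the surjectivity of $p^*$ from Proposition \ref{gerbe-final}. First I would specialize Lemma \ref{pic-bra} to the gerbe $p\,:\,{\mathcal M}_C(G)^{\delta,{\rm rs}}\longrightarrow M_C(G)^{\delta,{\rm rs}}$ banded by $Z_G$, whose class is $\psi$ in \eqref{psi11}; this yields the five-term exact sequence ending in
$$
\cdots\,\longrightarrow\,{\rm Hom}(Z_G,{\mathbb G}_m)\,\stackrel{\psi_*}{\longrightarrow}\,{\rm Br}(M_C(G)^{\delta,{\rm rs}})\,\stackrel{p^*}{\longrightarrow}\,{\rm Br}({\mathcal M}_C(G)^{\delta,{\rm rs}})\, .
$$
Using the equality ${\rm Br}({\mathcal M}_C(G)^{\delta,{\rm rs}})={\rm Br}({\mathcal M}_C(G)^\delta)$ recorded at the start of the proof of Proposition \ref{gerbe-final} (a consequence of Proposition \ref{bis-hoff}, since the removed locus has codimension at least two and the Brauer group of a smooth variety is insensitive to removing such loci), the target of $p^*$ becomes ${\rm Br}({\mathcal M}_C(G)^\delta)$, exactly the rightmost term of the desired sequence.

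Next I would invoke Proposition \ref{gerbe-final}, which asserts that $p^*$ is surjective. Combined with the exactness from Lemma \ref{pic-bra}, this gives a short exact sequence
$$
0\,\longrightarrow\,{\rm Br}(M_C(G)^{\delta,{\rm rs}})/\ker(p^*)\,\longrightarrow\,{\rm Br}(M_C(G)^{\delta,{\rm rs}})\,\stackrel{p^*}{\longrightarrow}\,{\rm Br}({\mathcal M}_C(G)^\delta)\,\longrightarrow\,0\, ,
$$
so the only remaining task is to identify the kernel $\ker(p^*)$, which by exactness equals the image of $\psi_*$. Here I would apply Proposition \ref{weight}, which states precisely that $\ker(p^*)={\rm Coker}({\rm ev}_G^\delta)$. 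Substituting this identification into the sequence above produces the claimed short exact sequence
$$
0\,\longrightarrow\,{\rm Coker}({\rm ev}_G^\delta)\,\longrightarrow\,{\rm Br}(M_C(G)^{\delta,{\rm rs}})\,\longrightarrow\,{\rm Br}({\mathcal M}_C(G)^\delta)\,\longrightarrow\,0\, .
$$

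The argument is therefore essentially a matter of bookkeeping: the three hard inputs (the gerbe exact sequence, the weight computation, and the surjectivity of $p^*$) have all been established, and the corollary follows by splicing them together. The one point that requires a little care, and which I would expect to be the main (if modest) obstacle, is verifying the compatibility of the identifications — namely that the image of $\psi_*$, which is the cokernel of the weight map ${\rm wt}$ appearing in Lemma \ref{pic-bra}, coincides under Proposition \ref{weight} with ${\rm Coker}({\rm ev}_G^\delta)$. This amounts to checking that the weight map of Theorem \ref{exact-gerbe} and \ref{gerbe-sc} is, as a map whose cokernel is being computed, literally the evaluation map ${\rm ev}_G^\delta$ of \eqref{evG}; granting Proposition \ref{weight} this is immediate, since that proposition is stated exactly as the computation of $\ker(p^*)$. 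No further calculation is needed.
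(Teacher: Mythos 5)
Your proposal is correct and follows exactly the paper's own route: the paper proves this corollary precisely by combining Lemma \ref{pic-bra} (applied to the $Z_G$-gerbe $p$ in \eqref{p}), Proposition \ref{weight} for the kernel of $p^*$, and Proposition \ref{gerbe-final} for its surjectivity, together with the codimension-two observation from Proposition \ref{bis-hoff}. The only blemish is a notational slip in your intermediate display, where the first term should be $\ker(p^*)$ rather than ${\rm Br}(M_C(G)^{\delta,{\rm rs}})/\ker(p^*)$, but your surrounding text makes clear you intended the kernel, and the final sequence you obtain is the correct one.
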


\section{Computations for classical groups}\label{sec7}

\subsection{The cases of ${\rm SL}_n$ and ${\rm PGL}_n$}

Take a positive integer $n$, and take any $d\, \in\, [0\, ,n-1]$.
Fix a line bundle $L_d$ on $C$ of degree $d$. The twisted moduli
stack ${\mathcal M}^d_C({{\rm SL}_n})$ is the moduli stack of
vector bundles on $C$ of rank $n$ and determinant isomorphic to
$L_d$. Theorem \ref{thm0} and Theorem
\ref{thm0-d} imply that $${\rm Br}({\mathcal M}^d_C({{\rm SL}_n}))
\,=\,0$$ for any $d$. 

By Proposition \ref{weight}, the Brauer group of
$M^{d}_C({{\rm SL}_n})^{\rm rs}$
coincides with the cokernel of the homomorphism ${\rm ev}_G^{\delta}$,
and this
can be computed to be ${\mathbb Z}/{\rm g.c.d.}(n,d){\mathbb Z}$
by Table 1 of \cite{BH}. This recovers the results of \cite{BBGN}.

For the case of ${\rm PGL}_n$, the exact sequence given in Theorem
\ref{th-m-s} or Theorem \ref{direct} reproduces Theorem 1 of
\cite{BHO}.

The above approach also works for quotients of ${\rm SL}_n$ by
finite subgroups of the center $Z_{{\rm SL}_n}$. 

\subsection{The cases ${\rm Sp}_{2n}$ and ${\rm PSp}_{2n}$} 
We have 
${\rm Br}({\mathcal M}^d_C({{\rm Sp}_{2n}}))\,= \,0$ for
$d\,=\,0\, ,1$ by Theorem \ref{thm0} and Theorem \ref{thm0-d}. 

{}From Corollary \ref{sc} we have
${\rm Br}(M^{0}_C({\rm Sp}_{2n})^{\rm rs})\,=\,{\mathbb Z}/2{\mathbb 
Z}$.

\begin{proposition}\label{SP} 
Let $d=1$.
\begin{enumerate}
\item Assume that $n\, \ge \, 3$ is odd. Then 
$$
{\rm Br}(M^{1}_C({\rm Sp}_{2n})^{\rm rs})\,=\,0\, .
$$
The
square of the generating line bundle
on ${\mathcal M}^{1}_C({\rm Sp}_{2n})$ descends to $M^{1}_C({\rm 
Sp}_{2n})^{\rm rs}$ and generates ${\rm Pic}(M^{1}_C({\rm 
Sp}_{2n}))^{\rm rs}$.

\item Assume that $n\, \ge \, 3$ is even. Then 
${\rm Br}(M_C^{1}({\rm Sp}_{2n})^{\rm rs})\,=\,{\mathbb Z}/2{\mathbb 
Z}$.
The generating line bundle
on ${\mathcal M}^{1}_C({\rm Sp}_{2n})$ descends to $M^{1}_C({\rm
Sp}_{2n})^{\rm rs}$ and generates ${\rm Pic}(M^{1}_C({\rm 
Sp}_{2n})^{\rm rs})$.
\end{enumerate}
\end{proposition}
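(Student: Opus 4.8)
The plan is to compute the Brauer group of $M^{1}_C({\rm Sp}_{2n})^{\rm rs}$ by feeding the general machinery of Theorem \ref{exact-gerbe} and Proposition \ref{weight} through the specific structure of ${\rm Sp}_{2n}$. The center $Z_{{\rm Sp}_{2n}} = {\mathbb Z}/2{\mathbb Z}$, so ${\rm Hom}(Z_G, {\mathbb G}_m) = {\mathbb Z}/2{\mathbb Z}$, and since ${\rm Sp}_{2n}$ is almost simple we have $s=1$. The exact sequence of Theorem \ref{exact-gerbe} thus reads
$$
0 \to {\rm Pic}(M^{1}_C({\rm Sp}_{2n})^{\rm rs}) \stackrel{p^*}{\to} {\rm Pic}({\mathcal M}^{1}_C({\rm Sp}_{2n})) \stackrel{{\rm wt}}{\to} {\mathbb Z}/2{\mathbb Z} \stackrel{\psi_*}{\to} {\rm Br}(M^{1}_C({\rm Sp}_{2n})^{\rm rs}) \to 0\, ,
$$
where I use ${\rm Pic}({\mathcal M}^{1}_C({\rm Sp}_{2n})) \cong {\mathbb Z}$, generated by the ample generator of the Picard group of the twisted stack. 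Everything reduces to computing the single weight map ${\rm wt}\colon {\mathbb Z} \to {\mathbb Z}/2{\mathbb Z}$, equivalently the weight $\chi \in Z_G^{\vee}$ of the generating line bundle on ${\mathcal M}^{1}_C({\rm Sp}_{2n})$: the Brauer group is trivial precisely when ${\rm wt}$ is surjective (so $\psi_*$ is zero), and it equals ${\mathbb Z}/2{\mathbb Z}$ precisely when ${\rm wt}$ is zero (so $\psi_*$ is an isomorphism onto the cokernel). Because ${\rm Coker}({\rm ev}_G^{\delta})$ in Proposition \ref{weight} sits inside ${\rm Br}$, I should present this equivalently as an evaluation-map computation via Corollary \ref{corl}, which in the simply connected case ($\pi_1({\rm Sp}_{2n})=0$, $G=\widetilde G$) identifies ${\rm Br}(M^{1}_C({\rm Sp}_{2n})^{\rm rs})$ with ${\rm Coker}({\rm ev}_{{\rm Sp}_{2n}}^{\delta})$ for $\delta$ the nontrivial element of $Z_G$.

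The computational heart is therefore to evaluate ${\rm ev}_G^{\delta}(b)$ for $\delta$ the generator of $Z_{{\rm Sp}_{2n}} = {\mathbb Z}/2{\mathbb Z}$, where $b$ ranges over the group $\Psi(G)$ of even $W$-invariant symmetric bilinear forms on the coroot lattice. For ${\rm Sp}_{2n}$ the coroot lattice, the Weyl group, and the normalized basic inner product are explicit: the relevant $b$ are integer multiples of the basic form, and the constraint is the condition that the form restricted to $Z_{\widetilde G}$ takes the value $b(\delta,\delta)$ in ${\mathbb Q}/{\mathbb Z}$. The parity of $n$ enters through the value $b(\delta,\delta) \in \frac{1}{2}{\mathbb Z}/{\mathbb Z}$, which is $0$ when $n$ is odd and $\frac{1}{2}$ when $n$ is even (this is exactly the kind of parity phenomenon that Table 1 of \cite{BH} records, and I would read off the entry for ${\rm Sp}_{2n}$ directly rather than recompute it). When $b(\delta,-)$ is forced to be trivial on $Z_G$, the evaluation map is zero and its cokernel is all of ${\mathbb Z}/2{\mathbb Z}$; when $b(\delta,-)$ can be made nontrivial, the evaluation is surjective and the cokernel vanishes. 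Matching these two cases to $n$ even and $n$ odd respectively gives the stated Brauer groups.

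For the Picard-group assertions, I would argue from the same exact sequence read in the opposite direction. The generating line bundle on the stack ${\mathcal M}^{1}_C({\rm Sp}_{2n})$ descends to the coarse moduli space exactly when its weight is trivial, i.e.\ when it lies in the image of $p^*$; this is the $n$ even case, where ${\rm wt}=0$ forces every line bundle on the stack to descend, so the generator itself descends and generates ${\rm Pic}(M^{1}_C({\rm Sp}_{2n})^{\rm rs}) \cong {\mathbb Z}$. When $n$ is odd, ${\rm wt}$ is surjective, so the generator has nontrivial weight and does not descend; but its square has trivial weight and hence descends, and by exactness $p^*({\rm Pic}(M^{\rm rs}))$ is precisely the index-two subgroup $2{\mathbb Z} \subset {\mathbb Z} = {\rm Pic}({\mathcal M})$, so the descended square generates ${\rm Pic}(M^{1}_C({\rm Sp}_{2n})^{\rm rs})$.

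The main obstacle is the explicit evaluation of ${\rm ev}_G^{\delta}$ and pinning down its parity dependence on $n$; this is where the arithmetic of the coroot lattice of ${\rm Sp}_{2n}$ and the even $W$-invariant form genuinely does the work, and where an off-by-one in the normalization would flip the answer between the two cases. Once that single value $b(\delta,\delta) \in {\mathbb Q}/{\mathbb Z}$ is correctly computed (equivalently, read off from Table 1 of \cite{BH}), the rest of the argument is a formal reading of the four-term exact sequence and is essentially bookkeeping.
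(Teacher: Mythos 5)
Your overall strategy coincides with the paper's: combine the exact sequence of Theorem \ref{exact-gerbe} with the weight/evaluation computation of Proposition \ref{weight}, whose value for ${\rm Sp}_{2n}$ is recorded in Table 1 of \cite{BH}, and then read the Picard assertions off the same four-term sequence. That is literally all the paper's proof does, and your reduction of everything to the single value of the weight (equivalently of ${\rm ev}^{\delta}$ on the basic form) is the right reduction. One small bookkeeping caveat: Corollary \ref{corl} does not literally apply here, since ${\rm Sp}_{2n}$ is simply connected, $\pi_1({\rm Sp}_{2n})=0$, and the twisted space $M^{1}_C({\rm Sp}_{2n})^{\rm rs}$ is not a connected component of an untwisted moduli space; the correct citations are Theorem \ref{exact-gerbe} together with Proposition \ref{weight}, exactly as the paper states.

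The genuine problem is that the lattice computation you present as the ``computational heart'' is backwards, and this makes your write-up internally inconsistent. For type $C_n$ the coroot lattice is ${\mathbb Z}^n$, the basic even $W$--invariant form is $b_0 = 2\langle\cdot,\cdot\rangle_{\rm Euclidean}$ (so $\Psi = {\mathbb Z}\,b_0$), and the nontrivial central element is represented by the coweight $\delta = \frac{1}{2}(1,\dots,1)$; hence $b_0(\delta,\delta) = \frac{n}{2} \bmod {\mathbb Z}$, which is $\frac{1}{2}$ when $n$ is \emph{odd} and $0$ when $n$ is \emph{even} --- the opposite of what you claim. Thus ${\rm ev}^{\delta}$ is surjective precisely for $n$ odd (cokernel $0$, so ${\rm Br}=0$, the generator of ${\rm Pic}$ of the stack has nontrivial weight and only its square descends), and ${\rm ev}^{\delta}=0$ precisely for $n$ even (cokernel ${\mathbb Z}/2{\mathbb Z}$, the generator has trivial weight and itself descends). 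If one instead feeds your stated values of $b(\delta,\delta)$ into your own (correct) dichotomy ``trivial evaluation $\Rightarrow$ cokernel ${\mathbb Z}/2{\mathbb Z}$, nontrivial evaluation $\Rightarrow$ cokernel $0$,'' one obtains ${\rm Br}={\mathbb Z}/2{\mathbb Z}$ for $n$ odd and ${\rm Br}=0$ for $n$ even, contradicting the proposition; your closing sentence ``matching these two cases to $n$ even and $n$ odd respectively'' silently swaps the cases back, so the stated conclusion does not follow from the computation you actually give. This is exactly the normalization flip you warned about in your last paragraph. Deferring to Table 1 of \cite{BH} (as the paper does) would rescue the conclusion, but then the displayed parity claim must be corrected or deleted, since as written it is the one substantive step of the argument and it is wrong.
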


\begin{proof} This follows by the combination of exact sequences
in Theorem \ref{exact-gerbe} and Proposition \ref{weight}. 
The cokernel of the homomorphism ${\rm ev}_G^{\delta}$ is $0$ 
when $n$ is odd, and it is ${\mathbb Z}/2{\mathbb Z}$ when $n$ is 
even by Table 1 of \cite{BH}.
\end{proof}

\begin{corollary}\label{cor1}
Let $M^{1}_C({\rm Sp}_{2n})^{\rm ss}$ be the twisted moduli space
of semistable ${\rm Sp}_{2n}$--bundles. The variety $M^1_C({\rm 
Sp}_{2n})^{\rm ss}$ is locally factorial for odd $n$, and
it is not locally factorial for even $n$.
\end{corollary}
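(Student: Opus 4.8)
The plan is to translate local factoriality into an extension problem for line bundles and then to solve it by a local analysis along the strictly semistable locus. Write $M^{\rm ss}$ and $M^{\rm rs}$ for $M^1_C({\rm Sp}_{2n})^{\rm ss}$ and its smooth locus. Since $M^{\rm ss}$ is normal, it is locally factorial if and only if the injection ${\rm Pic}(M^{\rm ss})\hookrightarrow {\rm Cl}(M^{\rm ss})$ is onto. By Proposition \ref{bis-hoff-twisted} the complement of $M^{\rm rs}$ in $M^{\rm ss}$ has codimension at least two and $M^{\rm rs}$ is smooth, so ${\rm Cl}(M^{\rm ss})={\rm Cl}(M^{\rm rs})={\rm Pic}(M^{\rm rs})$, while restriction embeds ${\rm Pic}(M^{\rm ss})$ in ${\rm Pic}(M^{\rm rs})$. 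Hence the first step is the reduction: $M^{\rm ss}$ is locally factorial if and only if the generator of ${\rm Pic}(M^{\rm rs})$ extends to a genuine line bundle on $M^{\rm ss}$. By Proposition \ref{SP} this generator is the descent of $\theta^2$ when $n$ is odd and of $\theta$ when $n$ is even, where $\theta$ denotes the ample generator of ${\rm Pic}({\mathcal M}^1_C({\rm Sp}_{2n}))$. Since the obstruction to extending a line bundle across a codimension $\ge 2$ set is supported on the singularities of $M^{\rm ss}$, this is a purely local question along the non--regularly--stable locus.

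The second step is to model these singularities. I would apply Luna's slice theorem at a polystable point $V$: an \'etale neighbourhood of $[V]$ in $M^{\rm ss}$ is the affine GIT quotient of the symplectic deformation space ${\rm Ext}^1_{\rm sp}(V,V)=H^1(C,\,{\mathfrak{sp}}(V))$ by the reductive stabiliser ${\rm Aut}_{\rm sp}(V)$, so $M^{\rm ss}$ is locally factorial precisely when every such quotient is factorial. The decomposition of a slope--$\tfrac12$ polystable symplectic bundle into an orthogonal sum of stable symplectic pieces and hyperbolic pieces $W_i\oplus W_i^{\ast}(p_0)$ controls these models, and the parity of $n$ enters through a rank constraint: a stable bundle of slope $\tfrac12$ must have even rank, so a maximal isotropic subbundle assembled from such pieces has rank $n$ when $n$ is even but only $n-1$ when $n$ is odd. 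Thus for even $n$ there is a \emph{fully hyperbolic} polystable point $V=W\oplus W^{\ast}(p_0)$ with $W$ of rank $n$, whereas for odd $n$ a stable symplectic summand $V_0$ of rank at least two always survives (a rank--one symplectic piece would have non--integral degree $\tfrac12$ and is excluded).

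The decisive, and hardest, step is the class--group computation of these slice quotients, where I expect the dichotomy to become transparent. At the fully hyperbolic point (even $n$) the stabiliser is ${\mathbb G}_m={\rm Aut}(W)$, acting on $H^1({\mathfrak{sp}}(V))$ only through ${\rm Sym}^2 W$ and ${\rm Sym}^2 W^{\ast}(p_0)$, hence with weights $0,\pm 2$ alone; the invariants of the weight $\pm 2$ part form a Segre (determinantal) cone whose divisor class group is nonzero for ${\rm genus}(C)\ge 3$, so $[V]$ is a non--factorial singular point and $M^{\rm ss}$ is not locally factorial. (The subgroup $\mu_2\subset{\mathbb G}_m$ acting trivially on these weight--$\pm2$ directions is exactly the residual band producing the class ${\rm Br}(M^{\rm rs})={\mathbb Z}/2{\mathbb Z}$ of Proposition \ref{SP}, a useful consistency check.) For odd $n$ the surviving summand $V_0$ contributes weight $\pm 1$ directions $W\otimes V_0$ to $H^1({\mathfrak{sp}}(V))$, and these are expected to render each slice quotient factorial, so that the $\theta^2$--generator of ${\rm Pic}(M^{\rm rs})$ does extend and $M^{\rm ss}$ is locally factorial. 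The main obstacle is therefore twofold: computing the stabiliser weights on $H^1(C,{\mathfrak{sp}}(V))$ at every boundary stratum, and proving factoriality of the resulting ${\mathbb G}_m$--quotients for \emph{all} polystable types when $n$ is odd --- not merely at the deepest stratum --- since a single non--factorial slice anywhere would already break local factoriality.
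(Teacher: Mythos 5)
Your opening reduction is correct, and it is the same reduction the paper uses implicitly: since $M^1_C({\rm Sp}_{2n})^{\rm ss}$ is normal, $M^1_C({\rm Sp}_{2n})^{\rm rs}$ is smooth, and its complement has codimension at least two (Proposition \ref{bis-hoff-twisted}), local factoriality is equivalent to surjectivity of ${\rm Pic}(M^{\rm ss})\to {\rm Cl}(M^{\rm ss})={\rm Pic}(M^{\rm rs})$, i.e.\ to the generator identified in Proposition \ref{SP} extending to a line bundle on all of $M^{\rm ss}$. But at this point the paper finishes in one stroke: by \cite[p.~209, Proposition 11.2(a)]{BLS} the group ${\rm Pic}(M^1_C({\rm Sp}_{2n})^{\rm ss})$ is, for \emph{every} $n$, generated by the descent of the \emph{square} of the generating line bundle of the stack, and comparing with Proposition \ref{SP} (generator $=$ descent of $\theta^2$ for $n$ odd, of $\theta$ for $n$ even) settles both cases at once. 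You replace this global input by a Luna-slice program. That is a genuinely different route, but it is not a proof as it stands: the decisive assertions --- factoriality of the slice quotients at \emph{all} polystable strata for odd $n$, and the weight computations beyond the deepest stratum --- are explicitly flagged as ``expected,'' and the invariant theory required (quotients of $H^1(C,\mathfrak{sp}(V))$ by stabilizers of the form $({\mathbb Z}/2{\mathbb Z})\times{\mathbb G}_m^k$ acting with mixed weights $0,\pm1,\pm2$) is nontrivial and nowhere carried out.

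There is also a logical gap that would remain even if those computations were done: in the even case you infer non-factoriality of $M^{\rm ss}$ at the fully hyperbolic point from non-factoriality of the \emph{\'etale (or formal) local model}. This implication is invalid in general. By Mori's theorem, for an excellent normal local ring the maps ${\rm Cl}({\mathcal O}_{X,x})\to {\rm Cl}({\mathcal O}^h_{X,x})\to {\rm Cl}(\widehat{\mathcal O}_{X,x})$ are \emph{injective}, not surjective; there exist factorial local rings whose henselization or completion is not factorial. So exhibiting a Segre-cone singularity in the slice quotient does not by itself show ${\rm Cl}({\mathcal O}_{M^{\rm ss},[V]})\neq {\rm Pic}$ --- one must additionally show that the generator of the local (formal) class group is the restriction of an algebraic Weil divisor class on $M^{\rm ss}$, e.g.\ that the $\theta$--descent generator of ${\rm Cl}(M^{\rm ss})={\rm Pic}(M^{\rm rs})$ maps onto the generator of the cone's class group. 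That missing global-to-local comparison is precisely what the citation of \cite{BLS} supplies in the paper's proof. Note the asymmetry: since ``${\rm Cl}$ of the henselization vanishes'' \emph{does} imply ``${\rm Cl}$ of the local ring vanishes,'' your method is in principle capable of proving the odd-$n$ (factorial) half, but, as stated, not the even-$n$ (non-factorial) half.
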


\begin{proof}
The Picard group of  $M^1_C({\rm Sp}_{2n})^{\rm ss}$ is always 
generated by the descent of the square of the generating line 
bundle on ${\mathcal M}^1_C({\rm Sp}_{2n})$ (see \cite[p. 209, 
Proposition 11.2(a)]{BLS}). Therefore, the corollary
follows from Proposition \ref{SP}.
\end{proof}

Note that $\pi_1({\rm PSp}_{2n})\, =\, Z_{{Sp}_{2n}}\,=\,
{\mathbb Z}/2{\mathbb Z}$.
For $d\, \in\, \pi_1({\rm PSp}_{2n})$,
let $${\mathcal M}_C({\rm PSp}_{2n})^d\, \subset\,
{\mathcal M}_C({\rm PSp}_{2n})$$ be the connected component
of the moduli stack of principal
${\rm PSp}_{2n}$--bundles corresponding to $d$. Similarly,
$$
M_C({\rm PSp}_{2n})^{d,{\rm rs}}\, \subset\,
M_C({\rm PSp}_{2n})^{\rm rs}
$$
is the connected component of the moduli space corresponding to 
the element $d$. The locus ${\mathcal M}_C({\rm PSp}_{2n}
)^{d,{\rm rs}}$ in ${\mathcal M}_C({\rm PSp}_{2n})^d$ of 
regularly stable bundles is an open sub--stack such that the
complement is of codimension at least two. Hence the Brauer
groups of ${\mathcal M}_C({\rm PSp}_{2n})^d$ and ${\mathcal M}_C
({\rm PSp}_{2n})^{d,{\rm rs}}$ coincide.

Define
$$
\Gamma\, :=\, H^2(C,\, {\mu}_2)
$$
(the 2--torsion points of the Jacobian of $C$).

\begin{proposition} Let $d=0$ or $1$. If $n\,\ge \,3$ is odd, then
${\rm Br}({\mathcal M}_C({\rm PSp}_{2n})^d)\,\cong\,
H^2(\Gamma, \,{\mathbb C}^*)$. If $n \ge 3$ is even, then
${\rm Br}({\mathcal M}_C({\rm PSp}_{2n})^d)\,\cong\,
H^2(\Gamma,\, {\mathbb C}^*)\oplus ({\mathbb Z}/2{\mathbb Z})$.
\end{proposition}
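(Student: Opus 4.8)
The plan is to apply Theorem~\ref{direct} to $G={\rm PSp}_{2n}$, whose universal cover is $\widetilde G={\rm Sp}_{2n}$ and whose fundamental group is $\pi_1(G)=Z_{{\rm Sp}_{2n}}=\mathbb Z/2\mathbb Z$; here $\Gamma=H^1(C,\mu_2)=J(C)[2]\cong(\mathbb Z/2\mathbb Z)^{2g}$. Since $\widetilde G$ is almost simple, ${\rm Pic}({\mathcal Q}_{\widetilde G})=\mathbb Z$, generated by the ample generator ${\mathcal L}$ of level one, so Theorem~\ref{direct} identifies ${\rm Br}({\mathcal M}_C(G)^d)$ with the cokernel of the obstruction map $\phi\colon{\rm Pic}({\mathcal Q}_{\widetilde G})\to H^2(BL_C(G),\mathbb C^*)$. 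By Proposition~\ref{LCG-2} the target fits in $0\to H^2(\Gamma,\mathbb C^*)\to H^2(BL_C(G),\mathbb C^*)\to\pi_1(G)^\vee\to0$ with $\pi_1(G)^\vee=\mathbb Z/2\mathbb Z$; moreover this surjection splits, because the proof of Proposition~\ref{LCG-2} realizes the generator of $\pi_1(G)^\vee$ through a $\mu_2$-gerbe pulled back from $BL_C(G)$, hence as a $2$-torsion lift. As $H^2(\Gamma,\mathbb C^*)$, the group of alternating $\mathbb C^*$-valued forms on the elementary abelian group $(\mathbb Z/2\mathbb Z)^{2g}$, is itself $2$-torsion, we obtain $H^2(BL_C(G),\mathbb C^*)\cong H^2(\Gamma,\mathbb C^*)\oplus(\mathbb Z/2\mathbb Z)$, an elementary abelian $2$-group.

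Next I would compute the image of $\phi({\mathcal L})$ in $\pi_1(G)^\vee={\rm Hom}(\pi_1(G),\mathbb C^*)$, which is the central character by which the level-one bundle fails to descend already to the identity-component quotient (recall from Lemma~\ref{L-pi}(2) that $H^2(BL_C(G)^0,\mathbb C^*)=\pi_1(G)^\vee$, where $L_C(G)^0=L_C(\widetilde G)/\pi_1(G)$ as in Lemma~\ref{LCGnsc}). This character is dictated by the generator $b_0$ of the even $W$-invariant symmetric forms on the coroot lattice of type $C_n$: the simple coroots are $e_i-e_{i+1}$ ($1\le i\le n-1$) and $e_n$, so $\Lambda_{\rm coroots}=\mathbb Z^n$ and the minimal even form is $b_0=2\langle\,,\rangle$ for the standard inner product. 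The generator of $\pi_1(G)=P^\vee/Q^\vee$ is $z=(\tfrac12,\dots,\tfrac12)$, whence $b_0(z,z)=n/2\bmod\mathbb Z$, and the character is $z\mapsto\exp(2\pi\sqrt{-1}\,b_0(z,z))=(-1)^n$: trivial when $n$ is even, and the generator of $\pi_1(G)^\vee$ when $n$ is odd. This is the form underlying Table~1 of \cite{BH} (and Proposition~\ref{SP}), and it is manifestly independent of $d$, which is why the two components $d=0,1$ give the same answer.

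To assemble the cokernel, write $\phi({\mathcal L})=(v,\epsilon)\in H^2(\Gamma,\mathbb C^*)\oplus(\mathbb Z/2\mathbb Z)$, where $\epsilon=(-1)^n$ is the character just computed. When $n$ is odd, $\epsilon$ is the generator, so $\phi({\mathcal L})$ has order two and projects onto $\pi_1(G)^\vee$; since the whole group is $2$-torsion, the inclusion $H^2(\Gamma,\mathbb C^*)\hookrightarrow H^2(BL_C(G),\mathbb C^*)$ descends to an isomorphism onto the quotient by $\langle\phi({\mathcal L})\rangle$ (irrespective of $v$), giving ${\rm Br}({\mathcal M}_C(G)^d)=H^2(\Gamma,\mathbb C^*)$. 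When $n$ is even, $\epsilon=0$, so $\phi({\mathcal L})\in H^2(\Gamma,\mathbb C^*)$; here one must show that in fact $\phi({\mathcal L})=0$, so that $\phi=0$ and ${\rm Br}({\mathcal M}_C(G)^d)=H^2(BL_C(G),\mathbb C^*)=H^2(\Gamma,\mathbb C^*)\oplus(\mathbb Z/2\mathbb Z)$.

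The step I expect to be the main obstacle is precisely this full vanishing $\phi({\mathcal L})=0$ for $n$ even: any surviving $H^2(\Gamma,\mathbb C^*)$-component of $\phi({\mathcal L})$ would spoil the direct-sum decomposition. The cleanest resolution is to invoke the descent criterion of \cite{BLS}: the level-one bundle descends to the stack ${\mathcal M}_C(G)^d$ exactly when the central character $\exp(2\pi\sqrt{-1}\,b_0(z,z))$ is trivial, i.e. when $n$ is even, which forces $\phi({\mathcal L})=0$ directly. Conceptually the same fact is that $\phi$ factors through the $\pi_1(G)^\vee$ summand: using the tower $BL_C(G)^0\to BL_C(G)\to B\Gamma$ and the $L_C(G)$-equivariant structure on the canonical ample bundle ${\mathcal L}$ coming from the uniformization of Proposition~\ref{unifG}, the class $\phi({\mathcal L})$ is a transgression (fibre) class and should carry no component in the base term $H^2(B\Gamma,\mathbb C^*)=H^2(\Gamma,\mathbb C^*)$; making this transgression argument rigorous, rather than citing the descent criterion, is where the care is required.
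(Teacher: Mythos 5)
Your route is the ``direct approach'' of the paper applied to $G={\rm PSp}_{2n}$ --- computing ${\rm Br}({\mathcal M}_C({\rm PSp}_{2n})^d)$ as ${\rm coker}\bigl(\phi\,:\,{\rm Pic}({\mathcal Q}_{{\rm Sp}_{2n}})\to H^2(BL_C({\rm PSp}_{2n}),{\mathbb C}^*)\bigr)$ via Theorem \ref{direct} and Proposition \ref{LCG-2} --- and this is legitimate: the paper itself remarks that either of its two approaches proves this proposition, but then writes out the other one (Theorem \ref{th-m-s} combined with Proposition \ref{SP} and the descent results of \cite{BLS}). Your identification $\Gamma\cong({\mathbb Z}/2{\mathbb Z})^{2g}$, the fact that ${\rm Pic}({\mathcal Q}_{{\rm Sp}_{2n}})={\mathbb Z}$, and the splitting of the sequence in Proposition \ref{LCG-2} by a $2$--torsion lift of the generator of $\pi_1(G)^{\vee}$ are all correct.

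The genuine error is the assertion that the image $\epsilon$ of $\phi({\mathcal L})$ in $\pi_1(G)^{\vee}$ equals $(-1)^n$ \emph{independently of} $d$. The restriction of $\phi({\mathcal L})$ to $H^2(B(L_C({\rm Sp}_{2n})/\mu_2),{\mathbb C}^*)=\pi_1(G)^{\vee}$ (Lemma \ref{L-pi}(2)) is the character by which the central $\mu_2$ acts in the (conjugated) $L_C({\rm Sp}_{2n})$--linearization of ${\mathcal L}$, i.e.\ the weight of ${\mathcal L}$ on the twisted stack ${\mathcal M}^{\delta}_C({\rm Sp}_{2n})$; by Proposition \ref{weight} and Table 1 of \cite{BH} this weight is $b_0(\delta,-)$, not $b_0(z,z)$. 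Hence $\epsilon=(-1)^n$ only for $d=1$, while $\epsilon$ is trivial for $d=0$: concretely, on ${\mathcal M}_C({\rm Sp}_{2n})$ the center acts on the determinant of cohomology by $(-1)^{\chi}$ with $\chi=2n(1-g)$ even, which is why level one always descends to $M^{0}_C({\rm Sp}_{2n})^{\rm rs}$ (Corollary \ref{sc}). For $d=0$ and $n$ odd this breaks your argument: there $\phi({\mathcal L})$ has trivial image in $\pi_1(G)^{\vee}$ and yet is nonzero, because by \cite[Proposition 4.2]{BLS} the level-one bundle does not descend to the ${\rm PSp}_{2n}$--moduli; so the obstruction is a nonzero class lying entirely inside $H^2(\Gamma,{\mathbb C}^*)$. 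The same example refutes the ``transgression'' heuristic of your final paragraph --- $\phi({\mathcal L})$ can very well have a nonzero $H^2(\Gamma,{\mathbb C}^*)$--component --- and shows that the descent criterion of \cite{BLS} is not ``central character trivial''. The conclusion ${\rm Br}\cong H^2(\Gamma,{\mathbb C}^*)$ for $d=0$, $n$ odd is still true, but only as an abstract isomorphism of elementary abelian $2$--groups: ${\rm coker}(\phi)\cong\bigl(H^2(\Gamma,{\mathbb C}^*)/\langle\phi({\mathcal L})\rangle\bigr)\oplus\pi_1(G)^{\vee}$ has the same ${\mathbb F}_2$--dimension as $H^2(\Gamma,{\mathbb C}^*)$, but the isomorphism is not induced by the inclusion of $H^2(\Gamma,{\mathbb C}^*)$. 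So the descent input from \cite{BLS} (equivalently, Proposition \ref{SP} together with Proposition \ref{weight}) is needed in every case --- both components and both parities of $n$ --- not just for $n$ even, and the true reason the two components give the same answer is this dimension count, not a $d$--independent central character.
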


\begin{proof}
Let $U^{d,{\rm rs}}\,\subset\, M^{d}_C({\rm Sp}_{2n})^{\rm rs}$
be the Zariski open subset defined by the inverse image of
$M_C({\rm PSp}_{2n})^{d,{\rm rs}}$. The codimension of the complement
of $U^{d, {\rm rs}}$ is at least two. To prove the proposition
one can use both the approaches used here. For example, 
Theorem \ref{th-m-s} gives the following exact sequence
\begin{equation}\label{mm}
0 \,\longrightarrow\, {\mathbb Z}/m{\mathbb Z}\,\longrightarrow\,
H^2(\Gamma, \,{\mathbb C}^*)\,\longrightarrow\,
{\rm Br}(M_C({\rm PSp}_{2n})^{d,{\rm rs}})\,\longrightarrow\,
{\rm Br}(U^{d, {\rm rs}})\, \longrightarrow\, 0\, ,
\end{equation}
where $m$ is the smallest power of the generating line bundle on 
$U^{d, {\rm rs}}$ which descends to the moduli space 
$M_C({\rm PSp}_{2n})^{d,{\rm rs}}$.

Since $M^{d}_C({\rm Sp}_{2n})^{\rm rs}$ is smooth, and the codimension
of the complement of $U^{d, {\rm rs}}$ is at least two, the
homomorphisms
$$
\text{Pic}(M^{d}_C({\rm Sp}_{2n})^{\rm rs})
\,\longrightarrow\, \text{Pic}(U^{d, {\rm rs}})
~\,~\text{~and~}~\,~ \text{Br}(M^{d}_C({\rm Sp}_{2n})^{\rm rs})
\,\longrightarrow\, \text{Br}(U^{d, {\rm rs}})
$$
induced by the inclusion $U^{d, {\rm rs}}\,\hookrightarrow\,
M^{d}_C({\rm Sp}_{2n})^{\rm rs}$ are isomorphisms.

When $n$ is even, by \cite[p. 191, Proposition 4.2]{BLS}, the 
generating line bundle on the affine Grassmannian
${\mathcal Q}_{{\rm Sp}_{2n}}$ descends all the way to 
$M_C({\rm PSp}_{2n})^{d,{\rm rs}}$. Hence in this case we have
$m\,=\,1$ in \eqref{mm}.

When $d\,=\,0$ with $n$ odd, the generating line bundle on 
the affine 
Grassmannian descends to $M^{0}_C({\rm Sp}_{2n})^{\rm rs}$
(see Proposition \ref{SP}), but it does not descend
to $M_C({\rm PSp}_{2n})^{0,{\rm rs}}$ by \cite[Proposition 
4.2]{BLS}. Hence in this case we have $m\,=\,2$ in \eqref{mm}.

If $d\,=\,1$ with $n$ odd, then the smallest power of the 
generating
line bundle on the affine Grassmannian which descends to 
$M^{1}_C({\rm Sp}_{2n})^{\rm rs}$ is $2$, and also $2$
is the smallest power of the generating
line bundle on the affine Grassmannian that descends to
$M_C({\rm PSp}_{2n})^{0,{\rm rs}}$ (again by
Proposition \ref{SP} and \cite[Proposition 4.2]{BLS}).
Hence $m\,= \,1$ in this case.

If $d\,=\,1$ with $n$ odd, then the above observation and Proposition 
\ref{SP} together imply the proposition. 

In the remaining cases we get a short exact sequence of the form
\begin{equation}\label{ex.}
0\,\longrightarrow\, ({\mathbb Z}/2{\mathbb Z})^b\,\longrightarrow\,
{\rm Br}(M_C({\rm PSp}_{2n})^{d,{\rm rs}})\,\longrightarrow\, {\mathbb 
Z}/2{\mathbb Z} \,\longrightarrow\, 0\, .
\end{equation}
To complete the proof of the proposition it suffices to show that the
sequence in \eqref{ex.} splits. The exact sequence 
splits if the generating Brauer class of 
$M^{d}_C({\rm Sp}_{2n})^{\rm rs}$ (when it is non--trivial) is
the pull back of a two--torsion class in ${\rm Br}(M_C({\rm 
PSp}_{2n})^{d,{\rm rs}})$. But this was proved in Section
\ref{sec6} (see the proof that the homomorphism in \eqref{t1}
is surjective).
\end{proof}

\subsection{The cases of ${\rm Spin}_n$, ${\rm SO}_n$
and ${\rm PSO}_n$}

The center $Z_{{\rm Spin}_n}$ of ${\rm Spin}_n$ is ${\mathbb 
Z}/2{\mathbb Z}$
when $n$ odd, and it is ${\mathbb Z}/4{\mathbb Z}$ or 
${\mathbb Z}/2{\mathbb Z}\times {\mathbb Z}/2{\mathbb Z}$ depending on
whether $n$ is of the form $4l+2$ or $4l$.
Take any $\delta\, \in\, Z_{{\rm Spin}_n}$.
Since the Dynkin index of the standard representation of ${\rm Spin}_n$ 
in ${\rm SL}_n$ is 2, the Picard group of the twisted moduli stack
${\mathcal M}^{\delta}_C({\rm Spin}_n)$ is generated by the 
Pffafian line bundle $P$
whose square is the determinant bundle of cohomology.

By Theorem \ref{thm0} and Theorem \ref{thm0-d},
$$
{\rm Br}({\mathcal M}^{\delta}_C({\rm Spin}_n))\,=\,0\, .
$$

\begin{proposition}\label{Spin}
\begin{enumerate}
\item If $\delta \,\in\, Z_{{\rm Spin_n}}$ is zero, then
${\rm Br}(M^{0}_C({\rm Spin}_{n})^{\rm rs})\,= \,Z^{\vee}_{{\rm 
Spin}_n}$.

\item Assume that $n\, \ge \, 4$ is odd. Then 
${\rm Br}(M^{1}_C({\rm Spin}_{n})^{\rm rs})\,=\,{\mathbb 
Z}/2{\mathbb Z}$. The Pffafian bundle descends to
$M^{1}_C({\rm Spin}_{n})^{\rm rs}$ and generates
the Picard group 
${\rm Pic}(M^{1}_C({\rm Spin}_{n})^{\rm rs})$.

\item Assume that $n\,=\,4l+2\,\ge \, 8$ and $\delta\,=\,1$ or $3$ {\rm 
mod} $(4)$. Then 
${\rm Br}(M^{\delta}_C({\rm Spin}_{n})^{\rm rs})\,=\,0$.
The fourth power of the Pffafian bundle descends to 
$M^{\delta}_C({\rm Spin}_{n})^{\rm rs}$ and it generates the
Picard group ${\rm Pic}(M^{\delta}_C({\rm Spin}_{n})^{\rm rs})$.

\item Assume that $n\,=\,4l+2\, \ge \,8$ and $\delta=2$ {\rm mod} $(4)$.
Then
${\rm Br}(M^{\delta}_C({\rm Spin}_{n})^{\rm rs})\,=\,
{\mathbb Z}/2{\mathbb Z}$. The square of the Pffafian bundle
descends to $M^{\delta}_C({\rm Spin}_{n})^{\rm rs}$ generating
${\rm Pic}(M^{\delta}_C({\rm Spin}_{n})^{\rm rs})$.

\item Assume that $n=4l \, \ge \, 8$ and $\delta \neq 0$. Then ${\rm Br}
(M^{\delta}_C({\rm Spin}_{n})^{\rm rs})\,=\,{\mathbb Z}/2{\mathbb 
Z}$. The square of the Pffafian bundle descends to $M^{\delta}_C(
{\rm Spin}_{n})^{\rm rs}$ generating the Picard group.
\end{enumerate}
\end{proposition}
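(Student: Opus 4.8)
The plan is to dispatch the untwisted case by citing Corollary \ref{sc} and to reduce all four twisted cases to a single weight computation, from which both the Brauer group and the descent statement are read off. For part (1), ${\rm Spin}_n$ is almost simple and simply connected, so Corollary \ref{sc} gives ${\rm Br}(M^0_C({\rm Spin}_n)^{\rm rs})\,=\,Z^{\vee}_{{\rm Spin}_n}$ with no further work.

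For the twisted sectors $\delta\,\neq\,0$ I would argue exactly as in the symplectic case (Proposition \ref{SP}), combining Theorem \ref{exact-gerbe} with Proposition \ref{weight}. Since ${\rm Spin}_n$ is simply connected, Theorem \ref{thm0-d} forces ${\rm Br}({\mathcal M}^{\delta}_C({\rm Spin}_n))\,=\,0$, so the surjection $\psi_*$ of Theorem \ref{exact-gerbe} identifies ${\rm Br}(M^{\delta}_C({\rm Spin}_n)^{\rm rs})$ with the cokernel of the weight map
$$
{\rm wt}\,:\,{\rm Pic}({\mathcal M}^{\delta}_C({\rm Spin}_n)^{\rm rs})\,\longrightarrow\,{\rm Hom}(Z_{{\rm Spin}_n},\,{\mathbb G}_m)\,=\,Z^{\vee}_{{\rm Spin}_n}\, .
$$
Here ${\rm Pic}({\mathcal M}^{\delta}_C({\rm Spin}_n)^{\rm rs})\,\cong\,{\mathbb Z}$ is generated by the Pffafian bundle $P$; the complement of the regularly stable locus has codimension at least two by Proposition \ref{bis-hoff-twisted}, so restriction does not change the Picard group. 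As $P$ sits at level one (its square is the determinant bundle of cohomology, whose level is the Dynkin index $2$ of the standard representation), its weight is the image under evaluation at $\delta$ of the basic even $W$--invariant form $b_0$. Hence
$$
{\rm Br}(M^{\delta}_C({\rm Spin}_n)^{\rm rs})\,=\,Z^{\vee}_{{\rm Spin}_n}/\langle{\rm wt}(P)\rangle\,=\,{\rm Coker}({\rm ev}_{{\rm Spin}_n}^{\delta})\, ,
$$
and the right--hand group is read off from Table 1 of \cite{BH}.

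The two descent assertions in each part then follow from a single datum, the order $k$ of ${\rm wt}(P)$ in $Z^{\vee}_{{\rm Spin}_n}$. Indeed, the injectivity of $p^*$ on Picard groups in Theorem \ref{exact-gerbe} shows that $P^j$ descends to $M^{\delta}_C({\rm Spin}_n)^{\rm rs}$ precisely when ${\rm wt}(P^j)\,=\,j\cdot{\rm wt}(P)$ vanishes, so the minimal descending power is $P^k$, and its descent generates ${\rm Pic}(M^{\delta}_C({\rm Spin}_n)^{\rm rs})$. The stated cases --- $P$ itself for $n$ odd (part (2)), $P^4$ for $n\,=\,4l+2$ with $\delta$ odd (part (3)), and $P^2$ in the two even--order situations (parts (4) and (5)) --- correspond to $k\,=\,1,4,2,2$, consistent with the Brauer groups ${\mathbb Z}/2{\mathbb Z}$, $0$, ${\mathbb Z}/2{\mathbb Z}$, ${\mathbb Z}/2{\mathbb Z}$.

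The crux is the explicit evaluation of ${\rm wt}(P)\,=\,{\rm ev}_{{\rm Spin}_n}^{\delta}(b_0)$ in the three regimes $Z_{{\rm Spin}_n}\,=\,{\mathbb Z}/2{\mathbb Z}$ ($n$ odd, type $B$), ${\mathbb Z}/4{\mathbb Z}$ ($n\,=\,4l+2$, type $D$), and $({\mathbb Z}/2{\mathbb Z})\times({\mathbb Z}/2{\mathbb Z})$ ($n\,=\,4l$, type $D$). This amounts to pairing $\delta$ against the center through the basic form on the coweight lattice; the delicate point is the type--$D$ dependence on $n\bmod 4$, which governs whether the center is cyclic and hence whether ${\rm wt}(P)$ has order $2$ or $4$. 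I would take these values from Table 1 of \cite{BH}, and cross--check the resulting descent powers against \cite[Proposition 4.2]{BLS}, which records precisely which powers of the generating bundle on the affine Grassmannian descend to the ${\rm Spin}_n$, ${\rm SO}_n$, and ${\rm PSO}_n$ moduli spaces.
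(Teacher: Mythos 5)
Your proposal is correct and follows essentially the same route as the paper: the paper's proof is precisely the combination of Corollary \ref{sc}, Theorem \ref{exact-gerbe}, and Proposition \ref{weight}, with the cokernel of ${\rm ev}_{{\rm Spin}_n}^{\delta}$ (equivalently the order of the weight of the Pffafian generator) read off from Table 1 of \cite{BH}, exactly as in the symplectic case of Proposition \ref{SP}. Your added details --- that restriction to the regularly stable locus preserves the Picard group by Proposition \ref{bis-hoff-twisted}, and that the minimal descending power $P^k$ generates ${\rm Pic}(M^{\delta}_C({\rm Spin}_n)^{\rm rs})$ by exactness --- are exactly what the paper leaves implicit in calling the verification straightforward.
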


\begin{proof}
In view of Proposition \ref{weight}, this is a straight--forward
calculation using Table 1 in \cite{BH}.
\end{proof}

For the case of ${\rm SO}_n$,
note that $\pi_1({\rm SO}_n)\, \subset\, Z_{{\rm Spin}_n}$.
For any $\delta\, \in\, \pi_1({\rm SO}_n)$, let
$$
{\mathcal M}_C({\rm SO}_{n})^{\delta}\, \subset\,
{\mathcal M}_C({\rm SO}_{n})
$$
be the connected component corresponding to $\delta$. Similarly,
let
$$
M_C({\rm SO}_{n})^{\delta, {\rm rs}}\, \subset\,
M_C({\rm SO}_{n})^{\rm rs}
$$
be the connected component corresponding to $\delta$.

Define
$$
\Gamma'\, :=\, H^1(C,\, \pi_1({\rm SO}_n)^{\vee})\, .
$$

\begin{proposition}\label{SOn}
Let $n\,\ge \, 8$. Take any $\delta \in \pi_1({\rm SO}_n)$.
\begin{enumerate}
\item If $\delta \,=\, 0$, then
$$
{\rm Br}(M_C({\rm SO}_{n})^{0,{\rm rs}})\,=\,
H^2(\Gamma', \,{\mathbb C}^*) \oplus Z^{\vee}_{{\rm Spin}_n}\, .
$$

\item If $\delta\,\not=\, 0$, then
${\rm Br}(M_C({\rm SO}_{n})^{\delta,{\rm rs}})\,\cong\, 
H^2(\Gamma',\, {\mathbb C}^*)\oplus {\mathbb Z}/2{\mathbb Z}$. 

\item We have ${\rm Br}({\mathcal M}_C({\rm SO}_{n})^{\delta})\,\cong
\,H^2(\Gamma',\, {\mathbb C}^*)\oplus {\mathbb Z}/2{\mathbb Z}$.
\end{enumerate}
\end{proposition}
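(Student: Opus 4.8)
The plan is to obtain parts (1) and (2) from the covering/gerbe exact sequence of Theorem \ref{th-m-s}, applied to $G={\rm SO}_n$ with universal cover $\widetilde G={\rm Spin}_n$, and then to deduce part (3) from (1) and (2) by means of Corollary \ref{corl}. A preliminary remark is that $\pi_1({\rm SO}_n)={\mathbb Z}/2{\mathbb Z}$ is canonically self-dual, so that the group $\Gamma=H^1(C,\pi_1({\rm SO}_n))$ occurring in Theorem \ref{th-m-s} is isomorphic to $\Gamma'=H^1(C,\pi_1({\rm SO}_n)^{\vee})$, and hence $H^2(\Gamma,{\mathbb C}^*)\cong H^2(\Gamma',{\mathbb C}^*)$; I will identify these two groups throughout.

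For (1) and (2), Theorem \ref{th-m-s} gives the exact sequence whose tail reads
$$
{\rm Pic}(M^{\delta}_C({\rm Spin}_n)^{\rm rs})\,\stackrel{d}{\longrightarrow}\,H^2(\Gamma,{\mathbb C}^*)\,\longrightarrow\,{\rm Br}(M_C({\rm SO}_n)^{\delta,{\rm rs}})\,\longrightarrow\,{\rm Br}(M^{\delta}_C({\rm Spin}_n)^{\rm rs})\,\longrightarrow\,0\, .
$$
By Proposition \ref{Spin} the rightmost nonzero term is $Z^{\vee}_{{\rm Spin}_n}$ when $\delta=0$ and is ${\mathbb Z}/2{\mathbb Z}$ when $\delta\neq 0$ (for $\delta\neq 0$ in $\pi_1({\rm SO}_n)$ one lands in case (2), (4) or (5) of Proposition \ref{Spin} according as $n$ is odd, $n\equiv 2\ ({\rm mod}\ 4)$, or $n\equiv 0\ ({\rm mod}\ 4)$, all three giving ${\mathbb Z}/2{\mathbb Z}$). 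The first step is to show that the connecting map $d$ vanishes. Now ${\rm Pic}(M^{\delta}_C({\rm Spin}_n)^{\rm rs})$ is cyclic, generated by the power of the Pfaffian bundle singled out in Proposition \ref{Spin}, and $d$ records the obstruction to descending this generator along the $\Gamma$-covering $\gamma_1$ of Lemma \ref{lema}. I would verify that this generator does descend, using the descent criterion of \cite[Prop. 4.2]{BLS} exactly as the integer $m$ was pinned down in the ${\rm PSp}_{2n}$ computation; this gives $d=0$ and hence a short exact sequence with kernel $H^2(\Gamma,{\mathbb C}^*)$. It remains to split this sequence. The splitting is provided by the fact, already established inside the proof of Theorem \ref{th-m-s} in order to make the map \eqref{t1} surjective, that the gerbe banded by $Z_{{\rm Spin}_n}$ descends to $M_C({\rm SO}_n)^{\delta,{\rm rs}}$: this descent furnishes a section realizing ${\rm Br}(M^{\delta}_C({\rm Spin}_n)^{\rm rs})$ as a direct summand, yielding $H^2(\Gamma',{\mathbb C}^*)\oplus Z^{\vee}_{{\rm Spin}_n}$ for (1) and $H^2(\Gamma',{\mathbb C}^*)\oplus{\mathbb Z}/2{\mathbb Z}$ for (2).

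For (3), I would substitute parts (1) and (2) into the exact sequence of Corollary \ref{corl},
$$
0\,\longrightarrow\,{\rm Coker}({\rm ev}^{\delta}_{{\rm SO}_n})\,\longrightarrow\,{\rm Br}(M_C({\rm SO}_n)^{\delta,{\rm rs}})\,\stackrel{p^*}{\longrightarrow}\,{\rm Br}({\mathcal M}_C({\rm SO}_n)^{\delta})\,\longrightarrow\,0\, ,
$$
so that ${\rm Br}({\mathcal M}_C({\rm SO}_n)^{\delta})$ is the quotient of the group found in (1)/(2) by the subgroup ${\rm Coker}({\rm ev}^{\delta}_{{\rm SO}_n})$, which by Proposition \ref{weight} is exactly the kernel of $p^*$. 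The remaining task is to read off this cokernel from Table 1 of \cite{BH} and to check that the quotient is $H^2(\Gamma',{\mathbb C}^*)\oplus{\mathbb Z}/2{\mathbb Z}$ independently of $\delta$. For $\delta\neq 0$ one finds ${\rm Coker}({\rm ev}^{\delta}_{{\rm SO}_n})=0$, so $p^*$ is an isomorphism and (3) reduces to (2). For $\delta=0$ the map ${\rm ev}^{0}_{{\rm SO}_n}$ is zero, hence ${\rm Coker}({\rm ev}^{0}_{{\rm SO}_n})=Z^{\vee}_{{\rm SO}_n}$; dualizing the central extension $0\to\pi_1({\rm SO}_n)\to Z_{{\rm Spin}_n}\to Z_{{\rm SO}_n}\to 0$ exhibits $Z^{\vee}_{{\rm SO}_n}$ as a subgroup of $Z^{\vee}_{{\rm Spin}_n}$ with quotient $\pi_1({\rm SO}_n)^{\vee}={\mathbb Z}/2{\mathbb Z}$, and since this copy sits inside the $Z^{\vee}_{{\rm Spin}_n}$ summand of the group from (1), the quotient is again $H^2(\Gamma',{\mathbb C}^*)\oplus{\mathbb Z}/2{\mathbb Z}$, matching the answer for $\delta\neq 0$. (Alternatively, (3) can be obtained directly from Theorem \ref{direct} together with Proposition \ref{LCG-2}.)

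The main obstacle is the vanishing $d=0$ in (1)/(2): this is the precise analogue of computing the integer $m$ in the ${\rm PSp}_{2n}$ argument, and it amounts to showing that the Pfaffian generator supplied by Proposition \ref{Spin} genuinely descends along the covering $\gamma_1$. This is a concrete but delicate descent computation governed by \cite[Prop. 4.2]{BLS}. A secondary point to watch in (3) is the need to verify that ${\rm Coker}({\rm ev}^{0}_{{\rm SO}_n})$ maps into the $Z^{\vee}_{{\rm Spin}_n}$ summand, rather than mixing with the $H^2(\Gamma',{\mathbb C}^*)$ factor, so that the quotient splits off cleanly as claimed.
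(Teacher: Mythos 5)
For parts (1) and (2) your argument is essentially the paper's own: the paper also applies Theorem \ref{th-m-s} to the covering of Lemma \ref{lema} with $\widetilde G = {\rm Spin}_n$, proves that the image of ${\rm Pic}(M^{\delta}_C({\rm Spin}_n)^{\rm rs})$ in $H^2(\Gamma',{\mathbb C}^*)$ vanishes (the integer $m$ in its sequence \eqref{m2m} equals $1$), and then splits the resulting extension \eqref{m3m} by the descended--gerbe argument from the proof of surjectivity of \eqref{t1}, exactly as you propose. Two caveats on this part. First, the paper does not use \cite[Prop.\ 4.2]{BLS} for the vanishing of your map $d$ (that proposition governs the symplectic case): for $\delta=0$ it invokes the existence of the Pfaffian bundle on the ${\rm SO}_n$ side, and for $\delta\neq 0$ it compares, via Proposition \ref{weight} and Table 1 of \cite{BH}, the minimal power of the Pfaffian descending to $M_C({\rm SO}_n)^{\delta,{\rm rs}}$ with the one descending to $M^{\delta}_C({\rm Spin}_n)^{\rm rs}$; your plan has the right shape, but the cited tool is off. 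Second, for $n\equiv 2\pmod 4$ and $\delta\neq 0$ the surjection $Z^{\vee}_{{\rm Spin}_n}={\mathbb Z}/4{\mathbb Z}\to {\rm Br}(M^{\delta}_C({\rm Spin}_n)^{\rm rs})={\mathbb Z}/2{\mathbb Z}$ of Theorem \ref{exact-gerbe} admits no group--theoretic section, so ``the descent furnishes a section'' requires the extra observation that the order--two character of $Z_{{\rm Spin}_n}$ (which factors through $Z_{{\rm SO}_n}$) kills the descended gerbe in ${\rm Br}(M_C({\rm SO}_n)^{\delta,{\rm rs}})$ --- equivalently, surjectivity of ${\rm ev}^{\delta}_{{\rm SO}_n}$. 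The paper is equally terse here (``identical to the case for ${\rm Sp}_n$''), so this is not a gap relative to the paper, just a point neither treatment makes explicit.

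For part (3) you genuinely diverge. The paper's proof is the route you relegate to your final parenthesis: the Pfaffian makes the map $H^2(BL_C({\rm SO}_n),{\mathbb C}^*)\to {\rm Br}({\mathcal M}_C({\rm SO}_n)^{\delta})$ of Theorem \ref{direct} an isomorphism, Proposition \ref{LCG-2} exhibits the extension of $\pi_1({\rm SO}_n)^{\vee}$ by $H^2(\Gamma',{\mathbb C}^*)$, and this is then split. Your primary route, through Corollary \ref{corl}, is also valid and not circular (the corollary is established before this proposition), and your bookkeeping is correct: ${\rm ev}^0=0$ gives cokernel $Z^{\vee}_{{\rm SO}_n}$, ${\rm Coker}({\rm ev}^{\delta})=0$ for $\delta\neq 0$ by Table 1 of \cite{BH}, and dualizing $0\to\pi_1({\rm SO}_n)\to Z_{{\rm Spin}_n}\to Z_{{\rm SO}_n}\to 0$ identifies the quotient. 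What your route buys is that (3) becomes a formal consequence of (1)/(2); what it costs is exactly the compatibility you flag, namely that $\ker(p^*)$ lies inside the $Z^{\vee}_{{\rm Spin}_n}$ summand. That check does go through: by Lemma \ref{pic-bra}, $\ker(p^*)$ is the image of $\psi_*$ for the $Z_{{\rm SO}_n}$--gerbe $\psi$ of \eqref{p}, and this gerbe is naturally the pushforward along $Z_{{\rm Spin}_n}\to Z_{{\rm SO}_n}$ of the descended $Z_{{\rm Spin}_n}$--gerbe used to build the splitting in (1), so ${\rm im}(\psi_*)$ lands in the image of the section. The paper's direct route avoids this compatibility check at the price of a separate splitting argument; both are sound.
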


\begin{proof}
The proof is very similar to that for $\text{Sp}_n$
as done in Proposition \ref{SP}.

For any $\delta\, \in\, \pi_1({\rm SO}_n)$, let
$$
U^{\delta,{\rm rs}}\,\subset\, M^\delta_C({\rm Spin}_{n})^{\rm rs}
$$
be the Zariski open subset given by the inverse image of
$M_C({\rm SO}_{n})^{\delta, {\rm rs}}$. The codimension of the 
complement
of $U^{\delta, {\rm rs}}$ in $M^\delta_C({\rm Spin}_{n})$
is at least two. Now Theorem \ref{th-m-s} gives the exact sequence
\begin{equation}\label{m2m}
0 \,\longrightarrow\, {\mathbb Z}/m{\mathbb Z}\,\longrightarrow\,
H^2(\Gamma', \,{\mathbb C}^*)\,\longrightarrow\,
{\rm Br}(M_C({\rm SO}_{n})^{\delta,{\rm rs}})\,\longrightarrow\,
{\rm Br}(U^{\delta, {\rm rs}})\, \longrightarrow\, 0\, ,
\end{equation}
where $m$ is the smallest power of the generating line bundle on 
$U^{\delta, {\rm rs}}$ which descends to 
$M_C({\rm SO}_{2})^{\delta,{\rm rs}}$.

We will show that the integer $m$ defined in \eqref{m2m} is $1$. This
amounts to showing that the generating line bundle on $M^\delta_C
({\rm Spin}_{n})^{\rm rs}$ descend to the quotient $M_C({\rm SO}_{
n})^{\delta, {\rm rs}}\,=\, U^{\delta,{\rm rs}}/\Gamma'$.

If $\delta \,=\,0$, this follows from the existence of the
Pffafian bundle.

Assume that $\delta \,\not=\,0$. We note that Proposition
\ref{Spin} gives the smallest power of the Pffafian
bundle which descends
to $M^{\delta}_C({\rm Spin}_{n})^{\rm rs}$ from the moduli stack
$\mathcal{M}^{\delta}_C({\rm Spin}_{n})$. Hence it is enough
to compute the smallest power of the Pffafian bundle on the 
moduli stack $\mathcal{M}^{\delta}_C({\rm Spin}_{n})$ that 
descends to $M_C({\rm SO}_{n})^{\delta, {\rm rs}}$. To calculate 
this, we again use Proposition \ref{weight} and the Table 1 of
\cite{BH}; we explicitly check that the smallest power of the
Pffafian bundle that descends to $M_C({\rm SO}_{n})^{\delta, {\rm 
rs}}$ coincide with the power that descends to $M^{\delta}_C({\rm 
Spin}_{n})^{\rm rs}$.

Let
\begin{equation}\label{m3m}
0 \,\longrightarrow\,
H^2(\Gamma', \,{\mathbb C}^*)\,\longrightarrow\,
{\rm Br}(M_C({\rm SO}_{n})^{\delta,{\rm rs}})\,\longrightarrow\,
Z_{{\rm Spin}_n}\, \longrightarrow\, 0\, ,
\end{equation}
be the exact sequence obtained from \eqref{m2m}.

To conclude parts (1) and (2) of the proposition, we still need to
show that the exact sequence in \eqref{m3m}
splits. The proof of this splitting
is identical to the case for $\text{Sp}_n$; the
details are omitted.

For the last part of the proposition, we use the existence
of Pffafian bundle to conclude that the homomorphism
$$
H^2(BL_C({\rm SO}_{n}),\, {\mathbb C}^*)\,\longrightarrow
\,{\rm Br}({\mathcal M}_C({\rm SO}_{n})^{\delta})
$$
in Theorem \ref{direct} is an isomorphism. Now Proposition
\ref{LCG-2} gives an exact sequence 
$$
0 \,\longrightarrow \,
H^2(\Gamma', \,{\mathbb C}^*)\,\longrightarrow \,
{\rm Br}({\mathcal M}_C({\rm SO}_{n})^{\delta}) \,\longrightarrow
\,{\mathbb Z}/2{\mathbb Z} \,\longrightarrow \, 0 \, .
$$
This sequence splits by using the argument for splitting
of the exact sequence in \eqref{m3m} and Proposition 
\ref{gerbe-final}.
\end{proof}

For the case of ${\rm PSO}_{2n}$, the components of the moduli 
space are parametrized by the center $Z_{{\rm Spin}_{2n}}$. 
Since ${\rm PSO}_{2n}$ is of adjoint type, the
Brauer groups of $M_C({\rm PSO}_{2n})^{\delta, {\rm rs}}$ and 
${\mathcal M}_C({\rm PSO}_{2n})^{\delta}$ can be identified as 
before. Define
$$
\Gamma_1\,:=\, H^1(C,\, Z^{\vee}_{{\rm Spin}_{2n}}) ~\,
\text{~and~}\, ~ B\,=\, H^2(\Gamma_1,\,{\mathbb C}^*)\, .
$$

Using the above methods and  the
description of the Picard and Brauer groups as done in
Proposition \ref{Spin} and \cite[Proposition
5.5]{BLS}, we get the following proposition.

\begin{proposition}
Let $n\, \ge \, 4$. We have 
${\rm Br}(M_C({\rm PSO}_{2n})^{\delta, {\rm rs}})\,=\, 
(B/A_{2n,\delta}) \oplus Z^{\vee}$,
where the groups $A_{2n,\delta}$ is computed as follows:
$A_{4n,\delta}\,=\,{\mathbb Z}/2{\mathbb Z}$ (respectively,
$A_{4n,\delta}\,=\,0$) if 
$\delta \,=\, 0$ (respectively, $\neq 0$), and
$A_{4n+2,\delta}\,=\,{\mathbb Z}/4{\mathbb Z}$ (respectively,
$A_{4n+2,\delta}\,=\,{\mathbb Z}/2{\mathbb Z}$) if $\delta$ is
$0$ mod $(4)$ (respectively, $2$ mod $(4)$), and
$A_{4n+2,\delta}\,=\, 0$ otherwise.
\end{proposition}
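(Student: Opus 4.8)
The plan is to run the same two-step strategy already used for $\mathrm{Sp}_{2n}$ in Proposition \ref{SP} and for $\mathrm{SO}_n$ in Proposition \ref{SOn}, now applied to the universal cover $\mathrm{Spin}_{2n}\to\mathrm{PSO}_{2n}$. Since $\mathrm{PSO}_{2n}$ is of adjoint type its center is trivial, so the gerbe $p$ of \eqref{p} is trivially banded and $\mathrm{Br}(M_C(\mathrm{PSO}_{2n})^{\delta,\mathrm{rs}})=\mathrm{Br}(\mathcal{M}_C(\mathrm{PSO}_{2n})^{\delta})$, as noted above the statement; it therefore suffices to compute the coarse-space Brauer group. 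Being adjoint, $\mathrm{PSO}_{2n}$ has $\pi_1(\mathrm{PSO}_{2n})=Z_{\mathrm{Spin}_{2n}}$, so the finite group appearing in Theorem \ref{th-m-s} is $H^1(C,\,Z_{\mathrm{Spin}_{2n}})$, which is non-canonically isomorphic to $\Gamma_1=H^1(C,\,Z^{\vee}_{\mathrm{Spin}_{2n}})$ and hence carries the same group $B=H^2(\Gamma_1,\,\mathbb{C}^*)$.

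First I would introduce the Zariski open set $U^{\delta,\mathrm{rs}}\subset M^{\delta}_C(\mathrm{Spin}_{2n})^{\mathrm{rs}}$ defined as the preimage of $M_C(\mathrm{PSO}_{2n})^{\delta,\mathrm{rs}}$ under $\gamma_1$; its complement has codimension at least two, so by the purity argument already used in Proposition \ref{SOn} (smoothness of the $\mathrm{Spin}_{2n}$ moduli space) the restriction $\mathrm{Br}(M^{\delta}_C(\mathrm{Spin}_{2n})^{\mathrm{rs}})\to\mathrm{Br}(U^{\delta,\mathrm{rs}})$ is an isomorphism, and the left-hand side is given by Proposition \ref{Spin}. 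Feeding the principal $\Gamma_1$-bundle $\gamma_1:U^{\delta,\mathrm{rs}}\to M_C(\mathrm{PSO}_{2n})^{\delta,\mathrm{rs}}$ into Theorem \ref{th-m-s} produces the four-term exact sequence
$$0\longrightarrow A_{2n,\delta}\longrightarrow B\longrightarrow\mathrm{Br}(M_C(\mathrm{PSO}_{2n})^{\delta,\mathrm{rs}})\longrightarrow\mathrm{Br}(M^{\delta}_C(\mathrm{Spin}_{2n})^{\mathrm{rs}})\longrightarrow 0,$$
in which the subgroup $A_{2n,\delta}\cong\mathbb{Z}/m\mathbb{Z}$ is the image of the transgression $\mathrm{Pic}(M^{\delta}_C(\mathrm{Spin}_{2n})^{\mathrm{rs}})\to B$, with $m$ the smallest power of the generating line bundle on $U^{\delta,\mathrm{rs}}$ that descends to $M_C(\mathrm{PSO}_{2n})^{\delta,\mathrm{rs}}$.

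The core of the argument, and the step I expect to be the main obstacle, is the explicit evaluation of $m$, equivalently the identification of $A_{2n,\delta}$, in every case. Here I would combine Proposition \ref{weight}, the weight tabulation in Table 1 of \cite{BH}, and the Picard description of $M_C(\mathrm{PSO}_{2n})^{\delta,\mathrm{rs}}$ from \cite[Proposition 5.5]{BLS}. The recipe is: Proposition \ref{Spin} already records the smallest power of the Pfaffian bundle descending from the stack $\mathcal{M}^{\delta}_C(\mathrm{Spin}_{2n})$ to $M^{\delta}_C(\mathrm{Spin}_{2n})^{\mathrm{rs}}$; I would then compute, via the even $W$-invariant form and the evaluation map \eqref{evG}, the smallest power that descends all the way to $M_C(\mathrm{PSO}_{2n})^{\delta,\mathrm{rs}}$, and read off $A_{2n,\delta}$ as the ratio. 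The bookkeeping genuinely splits according to the structure of the center: for $n$ even one has $2n\equiv 0\bmod 4$ and $Z_{\mathrm{Spin}_{2n}}\cong(\mathbb{Z}/2\mathbb{Z})^2$, giving the $A_{4N,\delta}$ values $\mathbb{Z}/2\mathbb{Z}$ or $0$, while for $n$ odd one has $2n\equiv 2\bmod 4$ and $Z_{\mathrm{Spin}_{2n}}\cong\mathbb{Z}/4\mathbb{Z}$, giving the $A_{4N+2,\delta}$ values $\mathbb{Z}/4\mathbb{Z}$, $\mathbb{Z}/2\mathbb{Z}$, or $0$ according to $\delta\bmod 4$.

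Finally, the above exact sequence reduces to
$$0\longrightarrow B/A_{2n,\delta}\longrightarrow\mathrm{Br}(M_C(\mathrm{PSO}_{2n})^{\delta,\mathrm{rs}})\longrightarrow Z^{\vee}\longrightarrow 0,$$
and it remains to show this splits in order to obtain the claimed direct sum. For this I would reuse verbatim the splitting argument from Proposition \ref{SOn} (itself identical to the $\mathrm{Sp}_{2n}$ case): the generating Brauer class of $M^{\delta}_C(\mathrm{Spin}_{2n})^{\mathrm{rs}}$, when nonzero, is the pullback of a two-torsion class on $M_C(\mathrm{PSO}_{2n})^{\delta,\mathrm{rs}}$, which follows from the surjectivity of $p^*$ and the descent of the relevant gerbe established in Section \ref{sec6} (the argument around \eqref{t1} together with Proposition \ref{gerbe-final}). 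Granting the case analysis of the previous paragraph, this splitting is routine, so the determination of the descent integer $m$ remains the one substantial point.
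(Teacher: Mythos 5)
Your proposal is correct and follows essentially the same route as the paper: the paper's own justification for this proposition is a one-line appeal to ``the above methods'' --- i.e.\ the exact sequence of Theorem \ref{th-m-s} applied to $\gamma_1\colon U^{\delta,\mathrm{rs}}\to M_C(\mathrm{PSO}_{2n})^{\delta,\mathrm{rs}}$ with purity, the determination of the descent integer $m$ via Proposition \ref{weight}, Table 1 of \cite{BH} and \cite[Proposition 5.5]{BLS}, Proposition \ref{Spin} for the $\mathrm{Spin}_{2n}$ side, and the $\mathrm{Sp}/\mathrm{SO}$ splitting argument --- which is exactly the skeleton you lay out. If anything, your write-up is more explicit than the paper's, which leaves the entire case analysis (including the Table 1 bookkeeping you flag as the substantial point) to the reader.
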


There is a remaining classical group $\Omega_{4n}$ for $n\, \ge \, 3$ 
defined by taking quotient of ${\rm Spin}_{4n}$ by a central 
subgroup of order $2$ different from the one that defines ${\rm 
SO}_{4n}$. There are 
two choices for such a sub-group and they define isomorphic groups. 

We can use the methods described above to determine the Picard and
Brauer groups of the moduli spaces of $\Omega_{4n}$ bundles, and 
we get the following.

\begin{proposition}
\begin{enumerate}
\item We have ${\rm Br}(M_C(\Omega_{4n})^{\delta,{\rm rs}})
\,\cong\, H^2(\Gamma,\, {\mathbb C}^*)\oplus {\mathbb 
Z}/2{\mathbb Z}$. The square of the Pffafian bundle descends to 
$M_C(\Omega_{4n})^{\delta,{\rm rs}}$ generating the Picard group.

\item We have ${\rm Br}(\mathcal{M}_C(\Omega_{4n})^\delta) \,\cong\,
H^2(\Gamma,\,{\mathbb C}^*)\oplus A_{4n,d}$, where 
$A_{4n,d} \,=\,0$ if either $d\,=\,0$ or $n$ is odd, and
$A_{4n,d} \,=\, {\mathbb Z}/2{\mathbb Z}$ otherwise.
The Picard group of $\mathcal{M}_C(\Omega_{4n}^\delta)$ is
generated by the descent of the Pffafian bundle if $n$ is even 
with $d\,\neq\, 0$, and in other cases
${\rm Pic}(\mathcal{M}_C(\Omega_{4n}^\delta))$ is
generated by the descent of the square of the Pffafian bundle.
\end{enumerate}
\end{proposition}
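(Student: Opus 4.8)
The plan is to reproduce, for the half--spin group, the two--pronged argument already carried out for $\mathrm{SO}_n$ and $\mathrm{PSO}_{2n}$ (Proposition \ref{SOn} and the $\mathrm{PSO}_{2n}$ proposition). Write $\Omega_{4n}=\mathrm{Spin}_{4n}/\mu$, where $\mu\subset Z_{\mathrm{Spin}_{4n}}=\mathbb{Z}/2\mathbb{Z}\times\mathbb{Z}/2\mathbb{Z}$ is the order--two central subgroup cutting out the half--spin group; thus the universal cover is $\widetilde G=\mathrm{Spin}_{4n}$, $\pi_1(\Omega_{4n})=\mu\cong\mathbb{Z}/2\mathbb{Z}$, $Z_{\Omega_{4n}}=Z_{\mathrm{Spin}_{4n}}/\mu\cong\mathbb{Z}/2\mathbb{Z}$, and $\Gamma=H^1(C,\mu)\cong(\mathbb{Z}/2\mathbb{Z})^{2g}$. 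Two structural facts drive everything: first, the Picard group of every twisted $\mathrm{Spin}_{4n}$ stack is $\mathbb{Z}\cdot P$, with $P$ the Pffafian (Dynkin index $2$), so all descent questions reduce to asking which power of $P$ descends; and second, $H^2(\Gamma,\mathbb{C}^*)\cong\wedge^2\Gamma$ is an \emph{elementary abelian} $2$--group, a fact I will use to absorb the $\mathbb{Z}/2\mathbb{Z}$ ambiguities that appear below.

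For part (1) I would feed $G=\Omega_{4n}$ into Theorem \ref{th-m-s}. Let $U^{\delta,\mathrm{rs}}\subset M^\delta_C(\mathrm{Spin}_{4n})^{\mathrm{rs}}$ be the inverse image of $M_C(\Omega_{4n})^{\delta,\mathrm{rs}}$ under $\gamma_1$; as $\gamma_1$ is finite and the complement of the regularly stable locus has codimension $\geq 2$, the same holds for the complement of $U^{\delta,\mathrm{rs}}$, so $\mathrm{Br}(U^{\delta,\mathrm{rs}})=\mathrm{Br}(M^\delta_C(\mathrm{Spin}_{4n})^{\mathrm{rs}})$, which is computed by Proposition \ref{Spin}: it is $Z^\vee_{\mathrm{Spin}_{4n}}\cong(\mathbb{Z}/2\mathbb{Z})^2$ for $\delta=0$ (part (1)) and $\mathbb{Z}/2\mathbb{Z}$ for $\delta\neq 0$ (part (5), since $4n\equiv 0\bmod 4$). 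Theorem \ref{th-m-s} then gives
\[
0\,\longrightarrow\,\mathbb{Z}/m\mathbb{Z}\,\longrightarrow\,H^2(\Gamma,\mathbb{C}^*)\,\longrightarrow\,\mathrm{Br}(M_C(\Omega_{4n})^{\delta,\mathrm{rs}})\,\longrightarrow\,\mathrm{Br}(U^{\delta,\mathrm{rs}})\,\longrightarrow\,0,
\]
where $m$ is the least power of the Pffafian on $U^{\delta,\mathrm{rs}}$ that descends to $M_C(\Omega_{4n})^{\delta,\mathrm{rs}}$. Using Proposition \ref{weight} and Table~1 of \cite{BH} one finds $m=1$ for $\delta\neq 0$ and $m=2$ for $\delta=0$; in either case, because $H^2(\Gamma,\mathbb{C}^*)$ is elementary abelian, the middle group has order $2\cdot|H^2(\Gamma,\mathbb{C}^*)|$ and is abstractly $H^2(\Gamma,\mathbb{C}^*)\oplus\mathbb{Z}/2\mathbb{Z}$. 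The surviving extension is split exactly as in the $\mathrm{Sp}_{2n}$ case (Proposition \ref{SP}), by realizing the generator of the quotient as the pullback of a $2$--torsion class, namely the surjectivity argument for \eqref{t1}; and the same computation identifies $P^{\otimes 2}$ as the smallest power descending to, and generating the Picard group of, $M_C(\Omega_{4n})^{\delta,\mathrm{rs}}$.

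For part (2) I would use the stack--level exact sequence of Theorem \ref{direct}, which exhibits $\mathrm{Br}(\mathcal{M}_C(\Omega_{4n})^\delta)$ as the cokernel of $\phi\colon\mathrm{Pic}(\mathcal{Q}_{\mathrm{Spin}_{4n}})\to H^2(BL_C(\Omega_{4n}),\mathbb{C}^*)$, the target sitting by Proposition \ref{LCG-2} in
\[
0\,\longrightarrow\,H^2(\Gamma,\mathbb{C}^*)\,\longrightarrow\,H^2(BL_C(\Omega_{4n}),\mathbb{C}^*)\,\longrightarrow\,\pi_1(\Omega_{4n})^\vee\,\longrightarrow\,0,
\]
with $\pi_1(\Omega_{4n})^\vee\cong\mathbb{Z}/2\mathbb{Z}$. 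By exactness $\ker\phi$ is the image of $\mathrm{Pic}(\mathcal{M}_C(\Omega_{4n})^\delta)$, so the whole content is whether the generator $P$ of $\mathrm{Pic}(\mathcal{Q}_{\mathrm{Spin}_{4n}})\cong\mathbb{Z}$ itself descends to the stack $\mathcal{M}_C(\Omega_{4n})^\delta$. By the descent criterion of \cite[Proposition 5.5]{BLS} this holds precisely when $n$ is even and $\delta\neq 0$; then $\phi=0$, the cokernel is all of $H^2(BL_C(\Omega_{4n}),\mathbb{C}^*)$, which splits (by Proposition \ref{gerbe-final} and the $\mathrm{Sp}_{2n}$ argument) as $H^2(\Gamma,\mathbb{C}^*)\oplus\mathbb{Z}/2\mathbb{Z}$, giving $A_{4n,\delta}=\mathbb{Z}/2\mathbb{Z}$ with $\mathrm{Pic}$ generated by the descent of $P$. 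In the remaining cases only $P^{\otimes 2}$ descends, $\phi(P)$ maps onto $\pi_1(\Omega_{4n})^\vee$, the cokernel collapses to $H^2(\Gamma,\mathbb{C}^*)$, so $A_{4n,\delta}=0$ and $\mathrm{Pic}$ is generated by the descent of $P^{\otimes 2}$.

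The main obstacle is the descent bookkeeping common to (1) and (2): one must determine, from the even $W$--invariant symmetric bilinear forms underlying $\mathrm{ev}^\delta_G$ and Table~1 of \cite{BH}, the exact power of the Pffafian that descends in each of the cases $\delta=0$, $\delta\neq 0$ with $n$ odd, and $\delta\neq 0$ with $n$ even, the point being that the half--spin coweight lattice differs from those of both $\mathrm{SO}_{4n}$ and $\mathrm{Spin}_{4n}$, so the answers cannot simply be imported from Propositions \ref{SOn} and \ref{Spin}. A secondary but genuine issue is the splitting of the two extensions of $H^2(\Gamma,\mathbb{C}^*)$: this is not formal and must be reduced, as for $\mathrm{Sp}_{2n}$, to the assertion that the generating Brauer class on the $\mathrm{Spin}_{4n}$ side is pulled back from a $2$--torsion class on the $\Omega_{4n}$ side.
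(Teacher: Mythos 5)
Since the paper writes no proof of this proposition beyond the sentence ``we can use the methods described above'', the comparison is with those methods; your proposal instantiates them exactly --- Theorem \ref{th-m-s}, Proposition \ref{Spin}, Proposition \ref{weight} and Table 1 of \cite{BH} for part (1), Theorem \ref{direct}, Proposition \ref{LCG-2} and the descent results of \cite{BLS} for part (2), with the ${\rm Sp}_{2n}$--style splitting argument --- and your part (1), together with the values $m=2$ for $\delta=0$ and $m=1$ for $\delta\neq 0$, is consistent with the paper's conclusions and argued at the same level of detail as the paper's ${\rm SO}_n$ case. There is, however, a genuine error in your part (2).

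You assert that in every case where only $P^{\otimes 2}$ descends to the stack (i.e.\ $\delta=0$ or $n$ odd), the class $\phi(P)$ maps \emph{onto} $\pi_1(\Omega_{4n})^{\vee}$, so that the cokernel ``collapses'' to $H^2(\Gamma,{\mathbb C}^*)$. This is false when $\delta=0$. Under the quotient map of \eqref{BLC}, which by construction (see \eqref{y1}) is restriction along $B(L_C({\rm Spin}_{4n})/\mu)\longrightarrow BL_C(\Omega_{4n})$, the image of $\phi(P)$ is the obstruction to linearizing $P$ for the subgroup $L_C({\rm Spin}_{4n})/\mu\subset L_C(\Omega_{4n})$, where $\mu=\pi_1(\Omega_{4n})\subset Z_{{\rm Spin}_{4n}}$. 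For $\delta=0$ the Pfaffian has \emph{trivial} central weight: by Proposition \ref{Spin}(1), ${\rm Br}(M^{0}_C({\rm Spin}_{4n})^{\rm rs})$ is all of $Z^{\vee}_{{\rm Spin}_{4n}}$, so by Theorem \ref{gerbe-sc} the weight homomorphism vanishes (indeed $P$ itself descends to the coarse space $M^{0}_C({\rm Spin}_{4n})^{\rm rs}$). Hence the $L_C({\rm Spin}_{4n})$--linearization of $P$ factors through $L_C({\rm Spin}_{4n})/\mu$, the restriction of $\phi(P)$ vanishes, and $\phi(P)$ is a nonzero element of the \emph{subgroup} $H^2(\Gamma,{\mathbb C}^*)$ rather than a class surjecting onto $\pi_1(\Omega_{4n})^{\vee}$. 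Conceptually, descent to the stack can fail for two distinct reasons --- a nontrivial $\mu$--weight, or a nontrivial $H^2(\Gamma,{\mathbb C}^*)$--valued cocycle obstruction; for $\delta\neq 0$ with $n$ odd it is the former (your argument is correct there), but for $\delta=0$ it is the latter.

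Consequently, for $\delta=0$ the cokernel of $\phi$ is an extension of $\pi_1(\Omega_{4n})^{\vee}\cong{\mathbb Z}/2{\mathbb Z}$ by $H^2(\Gamma,{\mathbb C}^*)/\langle\phi(P)\rangle$, and identifying it with $H^2(\Gamma,{\mathbb C}^*)$ requires the splitting of \eqref{BLC} --- the Proposition \ref{gerbe-final} argument, as in the proof of Proposition \ref{SOn}(3) --- which you invoked only in the case $n$ even, $\delta\neq 0$: once $H^2(BL_C(\Omega_{4n}),{\mathbb C}^*)$ is known to be elementary abelian, killing the order--two subgroup $\langle\phi(P)\rangle\subset H^2(\Gamma,{\mathbb C}^*)$ leaves a group abstractly isomorphic to $H^2(\Gamma,{\mathbb C}^*)$. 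So the stated answer $A_{4n,0}=0$ survives, but by a different mechanism (the $\pi_1^{\vee}$--direction survives while a line inside $H^2(\Gamma,{\mathbb C}^*)$ dies), and your argument for the cases $\delta=0$ does not go through as written.
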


The two exceptional groups $E_6$ and $E_7$ have nontrivial center.
Using Theorem \ref{direct} and Corollary \ref{corl} the Brauer
group of the moduli space and the moduli stack
for corresponding adjoint type groups can be computed as above.



\begin{thebibliography}{AAAA}

\bibitem[BBGN]{BBGN} V. Balaji, I. Biswas, O. Gabber and D.
S. Nagaraj, Brauer obstruction for a universal vector bundle,
{\it Comp. Ren. l'Acad. Sci. (Paris) -- Math.}
\textbf{345} (2007), 265--268.

\bibitem[BLS]{BLS} A. Beauville, Y. Laszlo and C. Sorger, 
The Picard group of the moduli of $G$-bundles on a curve, 
\textit{Compositio Math.} \textbf{112} (1998), 183--216.

\bibitem[BHg]{BHO} I. Biswas and A. Hogadi, Brauer group of
moduli spaces of $\text{PGL}(r)$--bundles over a curve, {\it
Adv. Math.} \textbf{225} (2010), 2317--2331.

\bibitem[BHf1]{BH} I. Biswas and N. Hoffmann, Poincar\'{e} families
of $G$-bundles on a curve, \textit{Math. Ann.} (to appear), 
http://arxiv.org/abs/1001.2123.

\bibitem[BHf2]{BH2} I. Biswas and N. Hoffmann, A Torelli theorem for
moduli spaces of principal bundles over a curve, \textit{Ann. Inst.
Fourier} (to appear), http://arxiv.org/abs/1003.4061.

\bibitem[dJ]{dJ} A. J. de Jong, A result of Gabber,\\
http://www.math.columbia.edu/$\sim$dejong/papers/2-gabber.pdf.

\bibitem[Fa]{F} G. Faltings, Stable $G$-bundles and projective 
connections, \textit{Jour. Alg. Geom.} \textbf{2} (1993), 507--568.

\bibitem[Ha]{Ha} G. Harder, Halbeinfache Gruppenschemata \"uber
Dedekindringen, \textit{Invent. Math.} \textbf{4} (1967), 165--191.

\bibitem[KN]{KN} S. Kumar and M. S. Narasimhan, Picard group of
the moduli spaces of $G$-bundles, \textit{Math. Ann.} \textbf{308} 
(1997), 155--173. 

\bibitem[Me]{Me} A. S. Merkur'ev, A seven-term sequence in
the Galois theory of schemes, \textit{Math. USSR Sb.} \textbf{37}
(1980), 367--380,\\
http://iopscience.iop.org/0025-5734/37/3/A05/pdf/0025-5734\_37\_3\_A05.pdf.

\bibitem[Mi]{Mi} J. S. Milne, \textit{\'Etale cohomology,} Princeton 
Mathematical Series, 33. Princeton University Press, Princeton, N.J., 
1980.

\bibitem[Ne]{Ne} P. E. Newstead, Comparison theorems for conic
bundles, \textit{Math. Proc. Cambridge Philos. Soc.} \textbf{90}
(1981), 21--31.

\bibitem[Ra]{Ra} A. Ramanathan, Moduli for principal bundles over 
algebraic curves. I, \textit{Proc. Indian Acad. Sci.
(Math. Sci.)} \textbf{106} (1996), 301--328.

\bibitem[Te]{Te} C. Teleman, Borel-Weil-Bott theory on the 
moduli stack of $G$-bundles over a curve, \textit{Invent. Math.} 
\textbf{134} (1998), 1--57.

\end{thebibliography}
\end{document}